\newtheorem{theorem}{Theorem}[section]
\newtheorem{lem}[theorem]{Lemma}
\newtheorem{prop}[theorem]{Proposition}
\newtheorem{coro}[theorem]{Corollary}
\newcounter{NoMain}
\newtheorem{mainthm}[NoMain]{Theorem}
\theoremstyle{definition}
\newtheorem{definition}[theorem]{Definition}
\newtheorem{Conj}[theorem]{Conjecture}
\theoremstyle{remark}
\newtheorem{remark}[theorem]{Remark}
\numberwithin{equation}{section}
\newcommand{\R}{\mathbb{R}}
\newcommand{\N}{\mathbb{N}}
\newcommand{\Z}{\mathbb{Z}}
\newcommand{\C}{\mathbb{C}}
\newcommand{\fonction}[5]{#1: \begin{array}{ccc}
#2 & \rightarrow & #3 \\
 #4 & \longmapsto & #5 \end{array}}
 \newcommand{\fon}[4]{\begin{array}{ccc}
#1 & \rightarrow & #2 \\
#3 & \longmapsto & #4 \end{array}}
\DeclareMathOperator{\Ima}{Im}
\DeclareMathOperator{\ad}{ad}
\DeclareMathOperator{\Ad}{Ad}
\DeclareMathOperator{\vspan}{span}
\DeclareMathOperator{\Id}{Id}
\DeclareMathOperator{\rank}{rank}
\DeclareMathOperator{\Stab}{Stab}
\DeclareMathOperator{\diam}{diam}
\DeclareMathOperator{\re}{Re}
\DeclareMathOperator{\Hess}{Hess}
\begin{document}

\title{Regularity of $K$-finite matrix coefficients of semisimple Lie groups}

\author{Guillaume Dumas}
\address{Université Claude Bernard Lyon 1, ICJ UMR5208, CNRS, Ecole Centrale de Lyon, INSA Lyon, Université Jean Monnet,
69622 Villeurbanne, France.}

\date{\today}

\email{gdumas@math.univ-lyon1.fr}
\subjclass[2020]{Primary 22E46; Secondary 43A85, 43A90}
\thanks{}

\begin{abstract}
We consider $G$ a semisimple Lie group with finite center and $K$ a maximal compact subgroup of $G$. We study the regularity of $K$-finite matrix coefficients of unitary representations of $G$. More precisely, we find the optimal value $\kappa(G)$ such that all such coefficients are $\kappa(G)$-Hölder continuous. The proof relies on analysis of spherical functions of the symmetric Gelfand pair $(G,K)$, using stationary phase estimates from Duistermaat, Kolk and Varadarajan. If $U$ is a compact form of $G$, then $(U,K)$ is a compact symmetric pair. Using the same tools, we study the regularity of $K$-finite coefficients of unitary representations of $U$, improving on previous results obtained by the author.
\end{abstract}
\maketitle

\section{Introduction}
The aim of this article is to investigate the regularity of (special classes) of matrix coefficients of unitary representations of a semisimple Lie group $G$ with finite center. If $K$ is a maximal compact subgroup of $G$, it is known by the work of Harish-Chandra (\cite{harishchandra}) that matrix coefficients associated to $K$-finite vectors of irreducible unitary representations of $G$ (and more generally admissible representations) are $C^\infty$. Thus, a natural space of matrix coefficients to consider is the space of $K$-finite matrix coefficients. Every unitary representation of $G$ decomposes as a direct integral of irreducible representations, but since Harish-Chandra's estimates depend on the representations, it does not provide any estimates for arbitrary representations.

It turns out that in this case, $(G,K)$ is a Gelfand pair. In this setting, any $K$-bi-invariant coefficient of a unitary representation of $G$ decomposes as a direct integral of positive-definite spherical functions of the pair $(G,K)$ (see Section \ref{sec:gelfand}). Thus, it is a good idea to further restrict the question to such coefficients. The quotient space $G/K$ also has a natural structure of Riemannian symmetric space (of non compact type). In this framework, a lot is known on the spherical functions of the pair (\cite{helgason1979differential},\cite{helgason2000groups}).

Every such symmetric space $G/K$ has a simply connected dual of compact type which is of the form $U/K$, where $U$ is a compact semisimple Lie group containing $K$ as a subgroup. Furthermore, $(U,K)$ is also a Gelfand pair, which we call a compact symmetric pair. Thus, $K$-bi-invariant coefficients of unitary representations of $U$ can also be studied using spherical functions of the pair $(U,K)$. This idea was used by Lafforgue to show that $SO(2)$-bi-invariant coefficients of $SO(3)$ are $\frac{1}{2}$-Hölder outside of singular points, which was a key ingredient in the proof of his strengthening of property (T) for $SL(3,\R)$ (\cite{lafforgue}). In a previous paper (\cite{dumas2023regularity}), we studied these questions for such compact pairs.\medskip

The goal of the paper is to find the optimal value $(r,\delta)\in \N\times [0,1]$ such that any $K$-finite matrix coefficient of $G$ is in the Hölder space $C^{(r,\delta)}(G_r)$ (see Section \ref{sec:holder} for the precise definition of these Hölder spaces). Here, $G_r$ is the dense open subset of regular points of $G$ (see Proposition \ref{prop:KAKv1} and after for the definition). It is important to point out that no regularity result can be proven outside of this space (see Remark \ref{rmk:singular}). However, the set of regular points was sufficient in applications such as in \cite{lafforgue}.

Given a Cartan involution of $\mathfrak{g}$, we have a decomposition  $\mathfrak{g}=\mathfrak{k}\oplus\mathfrak{p}$. If $\mathfrak{a}$ is a maximal abelian subspace of $\mathfrak{p}$, we can consider the associated root system $\Sigma\subset \mathfrak{a}^*$ and a choice of positive roots $\Sigma^+$ (see Section \ref{sec:sphericalfunctions} for more details, in particular for definitions of those objects). 
For $\lambda\in \mathfrak{a}^*$, define \begin{equation*}n(\lambda)=\sum_{\begin{subarray}{c}
    \alpha\in \Sigma^+\\
    \langle \alpha,\lambda\rangle\neq 0
\end{subarray}} m(\alpha)\end{equation*}and set \begin{equation*}
\kappa(G)=\underset{\lambda \in  \mathfrak{a}^*\setminus \{0\}}{\inf} \frac{n(\lambda)}{2}.\end{equation*}

Our main result is the following:
\begin{mainthm}\label{mainthmA}
    Let $G$ be a semisimple Lie group with finite center and $K$ a maximal compact subgroup of $G$. Let $r=\lfloor \kappa(G)\rfloor$ and $\delta=\kappa(G)-r$. Then any $K$-finite matrix coefficient of a unitary representation of $G$ is in $C^{(r,\delta)}(G_r)$. Furthermore, for any $\delta'>\delta$, there exists a $K$-bi-invariant coefficient of a unitary representation of $G$ which is not in $C^{(r,\delta')}(G_r)$.
\end{mainthm}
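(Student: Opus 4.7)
The plan is to reduce the problem to Hölder regularity of (generalized) spherical functions and then apply the stationary phase estimates of Duistermaat--Kolk--Varadarajan, paralleling the compact case treated in \cite{dumas2023regularity}. The first step is the reduction to the spherical setting: a $K$-finite coefficient $\varphi(g)=\langle\pi(g)v,w\rangle$ with $v,w$ lying in finitely many $K$-isotypic components of types $\tau,\sigma$ is recovered as a matrix entry of the $\Hom(V_\tau,V_\sigma)$-valued function $g\mapsto P_\sigma\pi(g)P_\tau$, which transforms equivariantly under $K\times K$. Decomposing $\pi$ as a direct integral of irreducibles and invoking the spherical Plancherel formula for $(G,K)$ (extended to the $(\sigma,\tau)$-isotypic sector) should reduce the problem to uniform Hölder estimates on the regular set $G_r$ for the associated family of $(\sigma,\tau)$-spherical functions $\Phi_\lambda$, $\lambda\in\mathfrak{a}^*$.

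Next, on the split part $A$ these functions admit Harish-Chandra-type integral representations, with phase function given by the Iwasawa projection paired against $\lambda$. The critical set is a submanifold arising from the $K$-stabilizer of $\exp H$, and at a regular $H$ the transverse Hessian should be non-degenerate in exactly $n(\lambda)$ directions: each root $\alpha$ with $\langle\alpha,\lambda\rangle\neq 0$ contributes $m(\alpha)$ non-degenerate transverse directions. The Duistermaat--Kolk--Varadarajan stationary phase estimates would then yield $|\Phi_\lambda(\exp H)|\lesssim(1+|\lambda|)^{-n(\lambda)/2}$ on compact subsets of $G_r$, together with analogous bounds on derivatives (each derivative in $g$ costs a factor of $|\lambda|$, each stationary phase improvement saves a factor of $|\lambda|^{1/2}$).

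A Littlewood--Paley-type interpolation, splitting the spectral integral at the threshold $|\lambda|\sim|g-g'|^{-1}$, would then convert these bounds into the $C^{(r,\delta)}$-estimate with $r+\delta=\kappa(G)$, uniformly in the unitary representation. For the sharpness assertion, one fixes $\lambda_0\in\mathfrak{a}^*\setminus\{0\}$ achieving (or approaching) the infimum in the definition of $\kappa(G)$, and constructs a spectral measure concentrated along $\R^+\cdot\lambda_0$ whose spherical transform has the stationary-phase asymptotics realizing exactly the $\delta$-Hölder regularity at a chosen regular point; the directions where $\kappa(G)$ is attained correspond precisely to $\lambda_0$ being orthogonal to many roots, which makes the associated oscillatory model the ``worst case''. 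The main obstacle will be ensuring the stationary phase bounds hold uniformly in $\lambda$ (including small $\lambda$ and not only asymptotically) and uniformly on compact subsets of $G_r$; the DKV framework was developed precisely to handle the non-isolated, Morse--Bott critical sets of this type, which is what makes this uniformity feasible.
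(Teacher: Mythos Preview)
Your outline is correct in spirit for the analytic core --- the DKV stationary phase input, the derivative bounds $\|D^s\psi_\lambda\|\lesssim |\lambda|^{s-\kappa(G)}$, and the interpolation between $s=r$ and $s=r+1$ to obtain the $C^{(r,\delta)}$ bound --- and this part matches the paper's Theorem~\ref{thm:reg} closely. The sharpness idea (pick $\lambda_0$ attaining $n(\lambda_0)=2\kappa(G)$ and look along the ray $t\lambda_0$) is also the paper's strategy (Theorem~\ref{thm:opti}), although the paper needs a separate combinatorial lemma (Lemma~\ref{lem:expo}) to show that the Weyl-group sum of oscillating leading terms is genuinely unbounded in $C^{(r,\delta')}$; your ``construct a spectral measure'' is not quite enough without that step.

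The genuine gap is in your reduction from $K$-finite to spherical. You propose to decompose $\pi$ as a direct integral of irreducibles and then invoke a ``$(\sigma,\tau)$-extended spherical Plancherel formula'' to reduce to a family $\Phi_\lambda$, $\lambda\in\mathfrak{a}^*$. But the direct integral runs over the unitary dual of $G$, not over $\mathfrak{a}^*$; in general it contains discrete series and other representations not realized as spherical principal series, and there is no Gelfand-pair mechanism producing a clean integral of $(\sigma,\tau)$-spherical functions parametrized by $\lambda\in\mathfrak{a}^*$ with the Harish-Chandra oscillatory integral form. The paper avoids this entirely: it first proves a \emph{reduction from $K$-finite to $K$-bi-invariant} (Theorem~\ref{thm:kfinite}), so that only scalar spherical functions are ever needed. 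The reduction is nontrivial and is the main new ingredient in Section~\ref{sec:kfin}: for each $K\times K$-type $(\rho,V_\rho)$ one builds (Lemma~\ref{lem:smoothkfintokinv}) a smooth $B(V_\rho)$-valued function $\psi$ on $G$ satisfying $\psi(u\cdot g)=\psi(g)\rho(u)^{-1}$, whose entries are themselves matrix coefficients of unitary representations, and with $\psi(g_0)=P_{g_0}$ at a prescribed regular point. Composing $\psi$ with the $K\times K$-equivariant map $f(\pi(g))$ produces a $K$-bi-invariant coefficient $\tilde f$, and one recovers $f\circ\pi$ from $\tilde f$ by a smooth local operation using the smooth $KAK$ decomposition (Proposition~\ref{prop:KAKv2}). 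This is what replaces your proposed $(\sigma,\tau)$-spectral decomposition, and it is where the paper does the real work beyond DKV.
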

As explained above, the idea is to first prove this result for $K$-bi-invariant coefficients only. Then, we use the theory of decomposition of a representation into a direct integral to reduce this problem to a question of boundedness of positive-definite spherical functions in Hölder spaces (Lemmas \ref{lem:decinteg} and \ref{lem:lienspheriquekbiinv}). In the setting of semisimple Lie groups, these spherical functions have a well-known expression as an oscillatory integral over the maximal compact subgroup $K$ (\cite[Ch. IV]{helgason2000groups}). The key input is then the use of the stationary phase approximate to obtain estimates on these integrals, using the results from \cite{DKV}. The last step is to prove that the regularity obtained for $K$-bi-invariant coefficient can be enhanced to all $K$-finite coefficients. The arguments for this step follow the same idea as in \cite[Section 5]{dumas2023regularity} for compact semisimple Lie groups, but some steps are more involved due to the lack of a rich finite dimensional representation theory (Section \ref{sec:kfin}).

If $G_\C$ is the simply connected Lie group with Lie algebra $\mathfrak{g}_\C$, and $U$ the analytic subgroup corresponding to the subalgebra $\mathfrak{u}=\mathfrak{k}+i\mathfrak{p}$, it is known that $U$ is a compact simply connected semisimple Lie group and $K\subset U$. Then $(U,K)$ is a Gelfand pair and $U/K$ is a symmetric space of compact type, which is the simply connected compact dual of $G/K$. The regularity of $K$-finite matrix coefficients of unitary representations of $U$ was studied in \cite{dumas2023regularity}. The dual version of Theorem \ref{mainthmA} was obtained only in special cases, namely when $U/K$ is of rank $1$ and when $U/K$ is a Lie group (\cite[Theorems A and B]{dumas2023regularity}). However, given these results and some partial results, it was conjectured that the optimal regularity of $K$-finite coefficients on the subset of regular points $U_r$ should also be $\kappa(G)$ (see \cite[Conjecture 1.3]{dumas2023regularity} or Conjecture \ref{conj} for the statement). With the tools of stationary phase approximation developed for the noncompact case and some complex analysis, we generalise these results. More precisely, we show the following theorem.

\begin{mainthm}\label{mainthmB}
Let $G$ be a semisimple Lie group with finite center and $(U,K)$ defined as above. Let $r=\kappa(G)$ and $\delta=\kappa(G)-r$. Then, there exists an open subset $V\subset U_r$ such that any $K$-finite matrix coefficient of a unitary representation of $U$ is in $C^{(r,\delta)}(V)$. Furthermore, for any $\delta'>\delta$, there exists a $K$-bi-invariant matrix coefficient which is not in $C^{(r,\delta')}(U_r)$.
\end{mainthm}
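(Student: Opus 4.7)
The plan is to follow the three-step strategy of Theorem~A, adapted to the compact dual. As a first reduction I would invoke the direct-integral decomposition (Lemmas~\ref{lem:decinteg} and \ref{lem:lienspheriquekbiinv} apply to any Gelfand pair, hence to $(U,K)$) to reduce the $K$-bi-invariant part of Theorem~B to uniform Hölder estimates for the positive-definite spherical functions of $(U,K)$ on the open set $V \subset U_r$ to be constructed. The promotion from $K$-bi-invariant to $K$-finite coefficients would then proceed exactly as in Section~\ref{sec:kfin}; the argument is in fact slightly simpler because irreducible unitary representations of the compact group $U$ are finite-dimensional, which is the setting already exploited in \cite[Section~5]{dumas2023regularity}.

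The essential new input is the analysis of spherical functions of $(U,K)$. These are parametrised by dominant analytically integral weights $\mu$ and admit an oscillatory integral representation over $K$ analogous to Harish-Chandra's formula used in the noncompact case. This is where complex analysis enters: the phase of this integral can be obtained by analytically continuing the Harish-Chandra phase from $\mathfrak{a}$ to $i\mathfrak{a} \subset \mathfrak{u}$, using the Iwasawa decomposition of $G_\C$ and viewing $U = K \exp(i\mathfrak{a}) K$ as sitting inside $G_\C$. In a neighbourhood $V$ of a suitably chosen point of $\exp(i\mathfrak{a})$, the critical set and the Hessian signature of this phase are holomorphic deformations of those appearing in the noncompact picture, so the stationary phase estimates of \cite{DKV} apply with exactly the same count of transverse nondegenerate directions as in the proof of Theorem~A. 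This yields a uniform $(r,\delta)$-Hölder bound on spherical functions of $(U,K)$ in $V$ with $r+\delta = \kappa(G)$, which is the input required by the direct-integral reduction above, and thus the desired regularity statement on $V$.

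For the sharpness assertion on all of $U_r$, I would run a local obstruction at a single regular point. Choosing a direction $\lambda \in \mathfrak{a}^*\setminus \{0\}$ realising the infimum defining $\kappa(G)$, and a sequence $\mu_n$ of dominant weights tending to infinity along $\lambda$, one assembles the spherical functions $\phi_{\mu_n}$ into a positive-definite $K$-bi-invariant coefficient of $U$ whose oscillatory behaviour near a chosen regular point matches, after analytic continuation, the one used for the sharpness in Theorem~A. Since the obstruction is local and takes place in $U_r$, no global compactness phenomenon interferes, and one obtains failure of $C^{(r,\delta')}(U_r)$-regularity for every $\delta' > \delta$.

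The hard part will be the identification of $V$ and the justification that the stationary phase transfer is uniform there. Because $A_U = \exp(i\mathfrak{a})$ is a compact torus rather than a vector space, the analytically continued phase is periodic/multivalued, and outside of $V$ the associated oscillatory integral can acquire further critical points that have no counterpart in the noncompact picture; these can interfere with the phase and either improve or spoil the local regularity in ways that are delicate to control uniformly in $\mu$. Restricting to a fundamental-domain-like region where the critical-point structure is a genuine holomorphic deformation of the one analysed by \cite{DKV} is what forces the upper bound to be stated on an open subset of $U_r$ rather than on all of it, and it is where the technical content of the proof concentrates.
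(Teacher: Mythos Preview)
Your overall strategy matches the paper's: reduce to spherical functions via Lemmas~\ref{lem:decinteg} and~\ref{lem:lienspheriquekbiinv}, promote from $K$-bi-invariant to $K$-finite via \cite[Thm.~5.2]{dumas2023regularity}, and analyse the spherical functions through an analytic continuation of Harish-Chandra's integral. The sharpness argument you sketch (pick $\mu\in\Lambda$ realising $n(\mu)=2\kappa(G)$ and run the family $\psi_{n\mu}$) is also essentially what the paper does in Theorem~\ref{thm:regmaxcpt}.

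There is, however, a genuine gap in your treatment of the regularity bound. You write that ``the stationary phase estimates of \cite{DKV} apply with exactly the same count of transverse nondegenerate directions'', but they do not apply as stated: after analytic continuation the phase $F_{a,\mu}(k)=\mu(H(k^{-1}ak))$ is \emph{complex-valued}, and the DKV machinery is built for real phases. The paper instead invokes complex stationary phase theorems of Melin--Sj\"ostrand type (Theorems~\ref{thm:statphase} and~\ref{thm:statphaseholo}), which require the side condition $\re F_{a,\mu}\le 0$ on $K$ (valid for $\mu\in\Tilde\Lambda$ by \cite[Cor.~2.4]{clerc}) and, crucially for the uniform bound, control of the imaginary part of the phase at the \emph{complex} critical point $y_c(x)$ via Lemma~\ref{lem:imaginarypart}. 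A second subtlety you do not address is uniformity in $\mu$: the critical manifold $\mathcal C_\mu=\bigcup_{w} k_wK_\mu$ jumps in dimension as $\mu$ crosses walls, so $\mu$ cannot be treated as a smooth parameter globally. The paper fixes $\mu_0$, uses that $\mathcal C_\mu\subset\mathcal C_{\mu_0}$ for nearby $\mu\in\Tilde\Lambda$, and applies holomorphic stationary phase only in the coordinates transverse to $\mathcal C_{\mu_0}$, carrying the tangential coordinate as an extra parameter (this is the construction occupying most of the proof of Theorem~\ref{thm:regloccpt}).

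Finally, your diagnosis of why one must restrict to an open subset of $U_r$ is not quite the paper's. The obstruction is not that periodicity of the torus produces extra critical points, but that the analytic continuation of the Iwasawa projection $H$ (Lemma~\ref{lem:extan}) exists only on a $K$-conjugation-invariant neighbourhood of the identity in $G_\C$; away from there the integral representation $\psi_\mu(g)=\int_K e^{\mu(H(k^{-1}gk))}\,dk$ of Lemma~\ref{lem:spheriqueext} is simply unavailable, so there is no oscillatory integral to estimate at all. (Clerc does give a global but multivalued extension and an integral formula, but over a noncompact domain $K_g$, which is why his approach does not close the gap either.)
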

As explained above, the idea is to use analytic continuation to obtain an expression of spherical functions as an oscillatory integral (Lemma \ref{lem:spheriqueext}) which allow to use the stationary phase approximation. Since the phase function is now complex-valued, the proof is more involved. Theorem \ref{mainthmB} cannot be extended to all of $U_r$, because we only have such an oscillatory integral in a neighbourhood of identity. However, this still shows that the conjecture (\cite[Conjecture 1.3]{dumas2023regularity}) is true at least in some open subset. Furthermore, given some other partial results obtained in \cite{dumas2023regularity}, we also get the full conjecture for several new families of groups (Corollary \ref{coro:cpt2}).

The ideas for the compact case originate from \cite{clerc}. In this paper, Clerc actually gives an expression as an oscillatory integral for any point of $U$. However, the domain of integration lacks compactness. Thus, he could only get estimates for some specific subfamily of positive-definite functions, which is not enough for our purpose. 

\subsection*{Organisation of the paper}
Section \ref{sec:preli} contains preliminaries on Gelfand pairs and spherical functions, especially in the context of semisimple Lie groups. In Section \ref{sec:kfinite}, we reduce the main question from $K$-finite coefficients to $K$-bi-invariant (Theorem \ref{thm:kfinite}) and we explain how to work at the level of the Lie algebra of $G$. Section \ref{sec:mainsec} is devoted to the proof of Theorem \ref{mainthmA} in the case of $K$-bi-invariant coefficients. Finally in Section \ref{sec:cpt}, we study the same questions for compact symmetric pairs and prove Theorem \ref{mainthmB}.

\subsection*{Acknowledgements} I would like to thank my Ph.D. advisor Mikael de la Salle for his involvement. I am thankful to Marco Mazzucchelli for his help. I am indebted to Yannick Guedes Bonthonneau for pointing me out the right statements regarding stationnary phase and listening to my questions.

\section{Preliminaries}\label{sec:preli}
\subsection{Hölder spaces}\label{sec:holder}

\begin{definition}\label{def:holdermultivar}Let $(X,d)$ be a metric space and $U$ open subset of $X$, $(E,\Vert.\Vert)$ a normed vector space, $\alpha\in ]0,1]$. A function $f:U\to E$ is $\alpha$-Hölder if for any compact subset $K$ of $U$, there is $C_K>0$ such that $\forall x,y\in K$, $\Vert f(x)-f(y)\Vert \leq C_Kd(x,y)^\alpha$.

If $X$ is also a normed vector space and $r\in \N$, we say that the map $f$ belongs to $C^{(r,\alpha)}(U,E)$ if $f\in C^r(U,E)$ and the $r$-th differential $D^rf$ is $\alpha$-Hölder as a map from $U$ to the vector space of multilinear $r$-forms. We extend to $\alpha=0$ by $C^{(r,0)}(U,E)=C^r(U,E)$.

For $K$ a compact subset of $U$ and $f\in C^{(r,\alpha)}(U,E)$, define $$\Vert f\Vert_{C^{(r,\alpha)}(K,E)}=\max \left\{\underset{k\leq r}{\max}\, \underset{x\in K}{\sup}\Vert D^kf(x)\Vert,\underset{x,y\in K,x\neq y}{\sup}\frac{\Vert D^rf(x)-D^rf(y)\Vert}{d(x,y)^\alpha}\right\}.$$
The family of semi-norms $\Vert.\Vert_{C^{(r,\alpha)}(K,E)}$ for $K$ a compact subset of $U$ makes the space $C^{(r,\alpha)}(U,E)$ into a Fréchet space.

Finally if $(X,d)$ is a Riemannian manifold, we say that $f\in C^{(r,\alpha)}(U,E)$ if for any chart $(\varphi,V)$ of $U$, $f\circ \varphi^{-1}\in C^{(r,\alpha)}(\varphi(V),E)$.
\end{definition}
\begin{remark}\label{rem:holder}
    If $U$ is locally compact, a function $f:U\to E$ is $\alpha$-Hölder if and only if for any $x\in U$, there exists a neighbourhood $U_x$ of $x$ and a constant $C_x>0$ such that for any $y,z\in U_x$, $\Vert f(y)-f(z)\Vert\leq C_xd(y,z)^\alpha$.
\end{remark}
We will denote $C^{(r,\alpha)}(U,\C)$ by $C^{(r,\alpha)}(U)$.

The following lemma will be useful throughout the article and can be found in \cite[Lemma 2.1]{dumas2023regularity}
\begin{lem}
\label{lem:precomposition}Let $(X,d)$ and $(Y,d')$ be two Riemannian manifolds and $U,V$ open subsets of $X,Y$ respectively. Let $\alpha>0$ and $r\in \N$. Let $\varphi:U\to V$ be a function of class $C^{\infty}$. Then $\varphi_*:f\mapsto f\circ \varphi$ maps $C^{(r,\alpha)}(V)$ to $C^{(r,\alpha)}(U)$ and is continuous.
\end{lem}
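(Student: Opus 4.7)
The statement is local and chart-defined, so the first reduction is to assume $X$ and $Y$ are open subsets of Euclidean spaces. Indeed, a map $\varphi \colon U \to V$ of class $C^\infty$ reads in local charts as a $C^\infty$ map between open subsets of $\R^n$ and $\R^m$, and membership in $C^{(r,\alpha)}$ is defined chart-wise. Moreover, by Remark \ref{rem:holder} one only needs the Hölder estimate on some neighbourhood of every point, so one may assume that $K\subset U$ is a small enough compact on which all relevant functions and their derivatives are bounded, and that $\varphi(K)\subset V$ is contained in a compact on which the semi-norms of $f$ control everything.

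The main computational tool is the Faà di Bruno formula, which gives
\begin{equation*}
D^r(f\circ\varphi)(x) \;=\; \sum_{\pi} \bigl(D^{|\pi|}f\bigr)(\varphi(x)) \cdot \prod_{B\in \pi} D^{|B|}\varphi(x),
\end{equation*}
where the sum runs over set-partitions $\pi$ of $\{1,\dots,r\}$ and the products are interpreted as symmetric multilinear forms. Since $\varphi$ is $C^\infty$, each factor $D^{|B|}\varphi$ is smooth, hence bounded and Lipschitz on the compact $K$. The idea is then to check each summand separately: those with $|\pi|<r$ involve $(D^k f)\circ \varphi$ with $k<r$, a function which is at least $C^1$ on a neighbourhood of $\varphi(K)$ (since $f\in C^r(V)$), hence Lipschitz on $\varphi(K)$; because $\varphi$ is Lipschitz on $K$, the composition is Lipschitz, hence $\alpha$-Hölder for any $\alpha\in(0,1]$. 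For the unique summand with $|\pi|=r$, which contains $(D^rf)\circ\varphi$, one uses the hypothesis that $D^rf$ is itself $\alpha$-Hölder together with the Lipschitz property of $\varphi$ on $K$:
\begin{equation*}
\bigl\|(D^rf)(\varphi(x))-(D^rf)(\varphi(y))\bigr\| \;\leq\; C_{\varphi(K)}\,d'(\varphi(x),\varphi(y))^\alpha \;\leq\; C_{\varphi(K)}\,L^\alpha\,d(x,y)^\alpha.
\end{equation*}
Since a product of a bounded $\alpha$-Hölder function and a bounded Lipschitz function is again $\alpha$-Hölder on any compact set, every summand, hence $D^r(f\circ\varphi)$, is $\alpha$-Hölder on $K$.

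For the continuity of $\varphi_*$, one revisits the above bounds quantitatively. For any compact $K\subset U$ the set $K':=\varphi(K)$ is compact in $V$, and every factor $D^{|B|}\varphi$ is bounded on $K$ by a constant depending only on $\varphi$ and $K$, while the derivatives of $f$ at $\varphi(x)$ and the Hölder constant of $D^rf$ are controlled by $\|f\|_{C^{(r,\alpha)}(K')}$. Collecting the resulting estimate for each summand of Faà di Bruno yields
\begin{equation*}
\|f\circ\varphi\|_{C^{(r,\alpha)}(K)} \;\leq\; C(\varphi,K)\,\|f\|_{C^{(r,\alpha)}(K')},
\end{equation*}
which is exactly the continuity of $\varphi_*$ between Fréchet spaces.

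The only mildly delicate point is the bookkeeping: one must verify that the highest-order term really gives $C^r$ regularity (not merely pointwise $r$-th differentiability), which follows because all summands of lower order are continuous by construction, and that the Hölder estimate for products does not pick up any norm of $f$ beyond $\|f\|_{C^{(r,\alpha)}(K')}$. Once these are handled, the proof is essentially a careful application of the chain rule combined with the elementary fact that composing a Hölder function with a Lipschitz function preserves the Hölder exponent.
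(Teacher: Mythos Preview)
The paper does not include a proof of this lemma; it simply cites \cite[Lemma 2.1]{dumas2023regularity}. Your argument is the standard and correct one: reduce to local coordinates, apply the Fa\`a di Bruno formula, and observe that every term with $|\pi|<r$ is Lipschitz on compacts (since it involves $D^{k}f$ with $k<r$, which is $C^{1}$, multiplied by smooth factors coming from $\varphi$), while the unique top-order term inherits the $\alpha$-H\"older modulus from $D^{r}f$ through the locally Lipschitz map $\varphi$. The quantitative version of the same computation gives the continuity of $\varphi_{*}$ for the Fr\'echet topology.
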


\subsection{Gelfand pairs}\label{sec:gelfand}
\begin{definition}Let $G$ be a locally compact topological group with a left Haar measure $dg$ and $K$ a compact subgroup with normalized Haar measure $dk$. The pair $(G,K)$ is a Gelfand pair if the algebra of continuous $K$-bi-invariant functions on $G$ with compact support is commutative for the convolution.

A spherical function of $(G,K)$ is a continuous $K$-bi-invariant non-zero function on $G$ such that for all $x,y\in G$, $$\int_K \varphi(xky)\,dk=\varphi(x)\varphi(y).$$
\end{definition}

A standard result (see \cite[Coro. 6.3.3]{Dijk+2009}) gives a link between spherical functions of $(G,K)$ and unitary representations of $G$.
\begin{prop}
If $(G,K)$ is a Gelfand pair, then for any irreducible unitary representation $\pi$ of $G$ on a Hilbert space $\mathcal{H}$, the subspace $\mathcal{H}^K$ of $K$-invariant vectors is of dimension at most $1$.

The positive-definite spherical functions of $G$ are exactly the matrix coefficients $g\mapsto \langle\pi(g)v,v\rangle$ with $\pi$ an irreducible unitary representation of $G$ and $v$ a $K$-invariant unit vector.

If $G$ is compact, any spherical function is positive-definite.
\end{prop}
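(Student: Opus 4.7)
The plan is to establish the three assertions in sequence, relying throughout on the commutativity of the convolution algebra $\mathcal{A}$ of compactly supported continuous $K$-bi-invariant functions on $G$.

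For the dimension bound, I would introduce the orthogonal projection $P=\int_K \pi(k)\,dk$ onto $\mathcal{H}^K$. For any $f\in\mathcal{A}$ the operator $\pi(f)=\int_G f(g)\pi(g)\,dg$ commutes with $P$, and the family $\{\pi(f)|_{\mathcal{H}^K}:f\in\mathcal{A}\}$ forms a commutative $\ast$-algebra of bounded operators. Irreducibility of $\pi$ yields $\pi(C_c(G))''=B(\mathcal{H})$ by the von Neumann double commutant theorem. Averaging any $f\in C_c(G)$ over $K\times K$ produces $f^\natural\in\mathcal{A}$ with $P\pi(f)P=\pi(f^\natural)P$, so the corner $B(\mathcal{H}^K)=PB(\mathcal{H})P$ lies in the weak closure of $\pi(\mathcal{A})|_{\mathcal{H}^K}$ and is therefore commutative. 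This forces $\dim \mathcal{H}^K\leq 1$.

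For the second assertion, given a unit vector $v\in\mathcal{H}^K$, I set $\varphi(g)=\langle \pi(g)v,v\rangle$; positive-definiteness and $K$-bi-invariance are automatic. I would verify the spherical identity by the computation
\begin{equation*}
\int_K\varphi(xky)\,dk=\langle P\pi(y)v,\pi(x^{-1})v\rangle,
\end{equation*}
noting that $P\pi(y)v\in\mathcal{H}^K=\C v$ by the first part, so pairing against $v$ identifies $P\pi(y)v=\varphi(y)v$, and the right-hand side becomes $\varphi(x)\varphi(y)$. For the converse, I would apply the GNS construction to a positive-definite spherical function $\varphi$ to produce a cyclic triple $(\pi,\mathcal{H},v)$ with $v\in\mathcal{H}^K$. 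The spherical functional equation translates into the statement that $v$ is a joint eigenvector for the operators $\pi(f)$, $f\in\mathcal{A}$; irreducibility is then extracted from the fact that spherical positive-definite functions are characters of $\mathcal{A}$, hence pure states on the commutative $C^\ast$-algebra generated by $\pi(\mathcal{A})$, so any non-trivial $G$-invariant orthogonal decomposition of $\mathcal{H}$ would yield a non-trivial convex decomposition of $\varphi$ as a sum of normalised positive-definite $K$-bi-invariant functions, contradicting extremality.

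For the final statement, I would invoke the Peter--Weyl theorem for compact $G$: every continuous function lies in the closure of the algebraic sum of matrix coefficient spaces of irreducible finite-dimensional representations, and $K$-bi-invariance together with the dimension bound from the first part identifies a spherical function as a coefficient $\langle\pi(g)v,v\rangle$ of a $K$-invariant unit vector, hence as a positive-definite function. The principal obstacle is the irreducibility argument in the converse direction of the second assertion: in the noncompact setting one cannot lean on Peter--Weyl, so the extremality/pure-state argument must be set up carefully, and the identification of spherical positive-definite functions with characters of $\mathcal{A}$ must be handled directly from the functional equation rather than via a decomposition of $L^2(G)$.
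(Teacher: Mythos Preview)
The paper does not actually prove this proposition: it is stated as a standard result and attributed to \cite[Coro.~6.3.3]{Dijk+2009}, with no argument given in the text. So there is no ``paper's own proof'' to compare against; your outline is essentially the classical argument one finds in the cited reference and elsewhere.

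Your sketch is correct in its main strokes. A couple of places could be tightened. For the converse direction of the second assertion, the extremality\slash pure-state route works but is more indirect than necessary: once you have the GNS triple $(\pi,\mathcal H,v)$, the spherical identity gives $P\pi(f)v=\pi(f^\natural)v\in\C v$ for every $f\in C_c(G)$, so $\mathcal H^K=P\overline{\pi(C_c(G))v}=\C v$; then any $T\in\pi(G)'$ maps $v$ into $\mathcal H^K=\C v$, and cyclicity forces $T$ to be scalar. This bypasses the convex-decomposition argument entirely. For the compact case, invoking Peter--Weyl and the dimension bound only tells you $\varphi$ lies in the span of the zonal spherical functions $\varphi_\pi$; you still need to say why it is a \emph{single} one. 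The cleanest way is to note that the spherical functional equation makes $\varphi$ a joint eigenfunction for convolution by $\mathcal A$, while distinct $\varphi_\pi$ are orthogonal in $L^2(G)$ and have distinct eigenvalue systems, so only one term survives.
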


More details on Gelfand pairs can be found in \cite[Ch. 5,6,7]{Dijk+2009}.\smallskip

Given a Gelfand pair $(G,K)$, it is natural to study spherical functions in order to get results on $K$-bi-invariant matrix coefficients of unitary representations. Indeed, any matrix coefficient of a unitary representation decomposes into an integral of spherical functions - an infinite sum if $G$ is compact. Then studying boundedness of positive-definite spherical functions in some Hölder spaces is enough to obtain regularity for all $K$-bi-invariant matrix coefficients of unitary representations. More precisely, the optimal regularity of such coefficients is exactly the optimal uniform regularity of spherical functions. The proof of the following two lemmas can be found in \cite[Section 2.2]{dumas2023regularity}.
\begin{lem}\label{lem:decinteg}
Let $(G,K)$ be a Gelfand pair with $G$ second countable. Let $\varphi$ be a $K$-bi-invariant matrix coefficient of a unitary representation $\pi$ on an Hilbert space $\mathcal{H}$. Then, there exists a standard Borel space $X$ and a $\sigma$-finite measure $\mu$ on $X$ such that $$\varphi=\int_X c_x\varphi_x d\mu(x)$$where $\varphi_x$ is a positive-definite spherical function of $(G,K)$ for any $x\in X$ and $c\in L^1(X,\mu)$.
\end{lem}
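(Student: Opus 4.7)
The plan is to combine a direct integral decomposition of $\pi$ with the defining property of a Gelfand pair: every irreducible unitary representation of $G$ has at most one-dimensional space of $K$-invariants.

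First, I would reduce to the case where both vectors are $K$-invariant. Writing $\varphi(g)=\langle\pi(g)v,w\rangle$ and averaging the identity $\langle\pi(k_1gk_2)v,w\rangle$ over $K\times K$ against Haar measure, the $K$-bi-invariance of $\varphi$ yields
\[
\varphi(g)=\langle\pi(g)P_Kv,P_Kw\rangle,
\]
where $P_K=\int_K\pi(k)\,dk$ is the orthogonal projection onto $\mathcal{H}^K$. Thus we may assume $v,w\in\mathcal{H}^K$ from the outset.

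Next, using that $G$ is second countable (and, for the semisimple Lie groups at hand, of type I), I would apply the direct integral decomposition $\pi\cong\int_X^\oplus\pi_x\,d\mu(x)$ over a standard Borel $\sigma$-finite measure space $(X,\mu)$, with each $\pi_x$ irreducible. Correspondingly $\mathcal{H}\cong\int_X^\oplus\mathcal{H}_x\,d\mu(x)$, $v=\int_X^\oplus v_x\,d\mu(x)$, and $w=\int_X^\oplus w_x\,d\mu(x)$, with $v_x,w_x\in(\mathcal{H}_x)^K$ for almost every $x$. The Gelfand pair property gives $\dim(\mathcal{H}_x)^K\leq 1$. On the measurable set where $(\mathcal{H}_x)^K\neq 0$, fix via measurable selection a measurable family of unit vectors $e_x\in(\mathcal{H}_x)^K$, and write $v_x=a_xe_x$, $w_x=b_xe_x$ for measurable scalars $a_x,b_x$. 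Defining $\varphi_x(g)=\langle\pi_x(g)e_x,e_x\rangle$ gives a positive-definite spherical function of $(G,K)$, and setting $c_x=a_x\overline{b_x}$ yields
\[
\varphi(g)=\int_X c_x\varphi_x(g)\,d\mu(x).
\]
Cauchy--Schwarz together with $\|v\|^2=\int|a_x|^2\,d\mu(x)$ and $\|w\|^2=\int|b_x|^2\,d\mu(x)$ then ensures $\|c\|_{L^1(X,\mu)}\leq\|v\|\,\|w\|<\infty$.

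The main obstacle is the measurable selection of the unit vectors $e_x$ inside the measurable field of at most one-dimensional subspaces $\{(\mathcal{H}_x)^K\}_{x\in X}$; this is handled by classical measurable cross-section theorems for standard Borel spaces. If one wants to sidestep the type I hypothesis in the general Gelfand pair setting, one can instead apply the spectral theorem to the commutative convolution algebra $L^1(K\backslash G/K)$ acting on $\mathcal{H}^K$, and extract the decomposition along positive-definite spherical functions from the joint spectral measure, in the spirit of the Bochner--Godement theorem.
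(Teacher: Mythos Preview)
The paper does not actually prove this lemma in-text; it simply cites \cite[Section~2.2]{dumas2023regularity}. Your argument is correct and is the standard route: average to reduce to $K$-invariant vectors, disintegrate $\pi$ as a direct integral of irreducibles over a standard Borel space (such a decomposition exists for any separable unitary representation of a second countable locally compact group---type~I is only needed for uniqueness, not existence), use the one-dimensionality of each $(\mathcal{H}_x)^K$ to write $v_x=a_xe_x$, $w_x=b_xe_x$, and conclude with $c_x=a_x\overline{b_x}\in L^1$ by Cauchy--Schwarz. The measurable selection of the unit vectors $e_x$ is routine within the measurable-field formalism, and your Bochner--Godement alternative via the commutative $*$-algebra $L^1(K\backslash G/K)$ acting on $\mathcal{H}^K$ is an equally classical path that sidesteps the disintegration of $\pi$ altogether.
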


\begin{lem}\label{lem:lienspheriquekbiinv}Let $(G,K)$ be a Gelfand pair with $G$ a Lie group endowed with a Riemannian metric $d$ and $U$ any open subset of $G$. Let $(\varphi_\lambda)_{\lambda\in \Lambda}$ be the the family of positive-definite spherical functions of $(G,K)$. Then $(\varphi_\lambda)_{\lambda\in \Lambda}$ is bounded in $C^{(r,\delta)}(U)$ if and only if any $K$-bi-invariant matrix coefficient of a unitary representation of $G$ is in $C^{(r,\delta)}(U)$.
\end{lem}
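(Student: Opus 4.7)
The proof splits naturally into the two implications, with the reverse direction being the substantive one.

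For the implication ``uniform boundedness $\Rightarrow$ individual regularity'', I would invoke Lemma \ref{lem:decinteg} to write any $K$-bi-invariant matrix coefficient as $\varphi=\int_X c_x\varphi_x\,d\mu(x)$ with $c\in L^1(X,\mu)$. Fix a compact set $L\subset U$ and let $M=\sup_\lambda \Vert\varphi_\lambda\Vert_{C^{(r,\delta)}(L)}<\infty$. Then for each multi-index of order $k\leq r$, the integrand $x\mapsto c_xD^k\varphi_x$ is dominated in norm by $M|c_x|\in L^1$, which justifies differentiation under the integral sign. The Hölder estimate on $D^r\varphi$ then follows by integrating the pointwise Hölder estimates on $D^r\varphi_x$ against $|c_x|\,d\mu$. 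This yields $\varphi\in C^{(r,\delta)}(L)$ for every compact $L\subset U$, hence $\varphi\in C^{(r,\delta)}(U)$.

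For the converse, the natural tool is the closed graph theorem applied between appropriate complete spaces. I would let $\mathcal{F}$ denote the closed linear span, in the Banach space $C_b(G)$ of bounded continuous functions with sup norm, of the $K$-bi-invariant matrix coefficients of unitary representations of $G$. By hypothesis and the polarization identity, the restriction map $T:\mathcal{F}\to C^{(r,\delta)}(U)$, $T(\varphi)=\varphi|_U$, is well-defined (the target being the Fréchet space from Definition \ref{def:holdermultivar}). Its graph is closed: if $\varphi_n\to\varphi$ uniformly on $G$ and $T(\varphi_n)\to\psi$ in $C^{(r,\delta)}(U)$, then both convergences imply pointwise convergence on $U$, forcing $\psi=\varphi|_U$. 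By the closed graph theorem between Fréchet spaces, $T$ is continuous: for every compact $L\subset U$ there exists $C_L>0$ with $\Vert T(\varphi)\Vert_{C^{(r,\delta)}(L)}\leq C_L\Vert\varphi\Vert_\infty$ for all $\varphi\in\mathcal{F}$. Since each positive-definite spherical function is normalised by $\varphi_\lambda(e)=1$ and satisfies $|\varphi_\lambda|\leq 1$, this yields $\sup_\lambda\Vert\varphi_\lambda\Vert_{C^{(r,\delta)}(L)}\leq C_L$, which is the required uniform bound.

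The main subtlety I expect is the functional-analytic setup: one must verify that $\mathcal{F}$ is indeed closed in $C_b(G)$ so as to inherit a Banach structure, and that the restriction map takes values in $C^{(r,\delta)}(U)$ on all of $\mathcal{F}$, not just on positive-definite elements (this is where polarization and the fact that every $K$-bi-invariant coefficient decomposes via Lemma \ref{lem:decinteg} enter). Once these points are settled, the closed graph argument is essentially formal and avoids any direct manipulation of singular sums of spherical functions.
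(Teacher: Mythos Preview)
The paper does not actually prove this lemma; it refers the reader to \cite[Section~2.2]{dumas2023regularity}. So there is no in-paper argument to compare against, and I assess your proposal on its own merits.

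Your forward direction is correct: Lemma~\ref{lem:decinteg} together with dominated convergence gives exactly the differentiation-under-the-integral and H\"older estimates you describe.

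For the converse, the closed graph strategy is the right one, but your choice of domain space does not work. You take $\mathcal{F}$ to be the \emph{closure} in $C_b(G)$ of the span of $K$-bi-invariant matrix coefficients, and then you need every element of that closure to lie in $C^{(r,\delta)}(U)$. Polarization and Lemma~\ref{lem:decinteg} do not help here: they apply to genuine matrix coefficients, whereas a uniform limit of matrix coefficients need not be one (the $K$-bi-invariant part of the Fourier--Stieltjes algebra $B(G)$ is in general not sup-norm closed in $C_b(G)$). So the subtlety you flag is a real gap, and the patch you propose does not close it.

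Two clean repairs are available. First, replace $(C_b(G),\Vert\cdot\Vert_\infty)$ by $(B(G),\Vert\cdot\Vert_{B(G)})$: every element of $B(G)$ \emph{is} a matrix coefficient, so the hypothesis applies directly, $B(G)^{K\times K}$ is a Banach space, and positive-definite spherical functions satisfy $\Vert\varphi_\lambda\Vert_{B(G)}=\varphi_\lambda(e)=1$. Second, and more elementarily, argue by contradiction: if the family is unbounded on some compact $L\subset U$, pick a witnessing sequence $(\varphi_{\lambda_n})$ and define $T\colon\ell^1(\N)\to C^{(r,\delta)}(U)$ by $T(c)=\sum_n c_n\varphi_{\lambda_n}$. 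For $c\geq 0$ the sum is a continuous $K$-bi-invariant positive-definite function, hence a matrix coefficient via GNS, hence in $C^{(r,\delta)}(U)$; the complex case follows by splitting $c$ into four nonnegative parts. The closed graph theorem then yields $\Vert\varphi_{\lambda_n}\Vert_{C^{(r,\delta)}(L)}\leq C_L$, a contradiction. Either route completes your argument without ever needing to understand the uniform closure.
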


\subsection{Spherical functions of semisimple Lie groups}\label{sec:sphericalfunctions}
Let $G$ be a connected real semisimple Lie group with finite center and $\mathfrak{g}$ its Lie algebra. Note that all results will apply to complex semisimple Lie groups, by viewing them naturally as real Lie groups. Let $\theta$ be a Cartan involution of $\mathfrak{g}$ and $\mathfrak{g}=\mathfrak{k}\oplus \mathfrak{p}$ be the decomposition of $\mathfrak{g}$ in $\pm 1$-eigenspaces of $\theta$. Then $K=\exp \mathfrak{k}$ is a maximal compact subgroup of $G$. Since all maximal compact subgroups of $G$ are conjugated, whenever we say "a maximal compact subgroup of $G$" in the sequel, we may always assume that it is defined by a Cartan involution. Consider $\mathfrak{a}$ a maximal abelian subspace of $\mathfrak{p}$. The rank of $G$ is $\rank G=\dim \mathfrak{a}=\ell$. For $\alpha\in \mathfrak{a}^*$, define $\mathfrak{g}^\alpha=\{X\in \mathfrak{g}\vert \forall H\in \mathfrak{a}, [H,X]=\alpha(H)X\}$ the root space associated to $\alpha$. Let $m(\alpha)=\dim(\mathfrak{g}^\alpha)$ and $\Sigma=\{\alpha\neq 0 \vert m(\alpha)\geq 1\}$ be the set of roots. We say that $\Sigma$ is the restricted root system of $G$. Let $\mathfrak{m}=\mathfrak{g}^0\cap \mathfrak{k}$. Then the Lie algebra decomposes as $$\mathfrak{g}=\mathfrak{m}\oplus \mathfrak{a}\oplus \bigoplus_{\alpha\in \Sigma} \mathfrak{g}^\alpha.$$

The Killing form of $\mathfrak{g}$ induces an inner product on $\mathfrak{a}$, denoted $\langle \cdot,\cdot\rangle$. Then for $\lambda\in \mathfrak{a}^*$, there is a unique $H_\lambda\in \mathfrak{a}$ such that for any $H\in \mathfrak{a}$, $\lambda(H)=\langle H_\lambda,H\rangle$. We use the isomorphism $\lambda \mapsto H_\lambda$ to define an inner product on $\mathfrak{a}^*$ by $$\langle \lambda,\mu\rangle=\langle H_\lambda,H_\mu\rangle.$$

Let $W$ be the Weyl group of the root system $\Sigma$, which is the subgroup of $O(\mathfrak{a}^*)$ generated by the reflections $s_\alpha:x\mapsto x-\frac{2\langle x,\alpha\rangle}{\langle \alpha,\alpha\rangle}\alpha$. The group $W$ also acts on $\mathfrak{a}$ by $wH_{\lambda}=H_{w\lambda}$. By \cite[Thm. 4.3.24]{varadarajan2013lie}, this action can be extended to automorphisms of the Lie algebra $\mathfrak{g}$. In particular, we get that $\mathfrak{g}^{w\alpha}=w(\mathfrak{g}^\alpha)$ and so $m(w\alpha)=m(\alpha)$. The hyperplanes $\{\alpha(H)=0\}$ divide $\mathfrak{a}$ into $\vert W\vert$ connected components. We choose one, which we denote $\mathfrak{a}^+$ and call the positive Weyl chamber, and we define the positive roots $\Sigma^+=\{\alpha \in \Sigma \vert \forall H\in \mathfrak{a}^+,\alpha(H)>0\}$. Then $\Sigma=\Sigma^+ \cup (-\Sigma^+)$. We say that $\alpha\in \Sigma^+$ is simple if it cannot be decomposed as $\alpha=\beta+\gamma$ with $\beta,\gamma\in \Sigma^+$. Let $\Delta$ be the set of simple roots. Then $\Delta$ is a basis of $\mathfrak{a}^*$ and we can write $\Delta=\{\alpha_1,\cdots,\alpha_\ell\}$. Given $\alpha\in \Sigma^+$, $\alpha=\sum_{i=1}^\ell n_i(\alpha)\alpha$ with $n_i(\alpha)\in \N$. Furthermore, the group $W$ is generated by the reflections $\{s_\alpha\}_{\alpha\in \Delta}$ (\cite[Ch. VI, Thm. 2]{bourbaki2007groupes}). For any $\alpha\in \Delta$, the reflection $s_\alpha$ permutes the positive roots that are not proportional to $\alpha$ (\cite[Ch. VI, Prop. 17]{bourbaki2007groupes}).\smallskip

For $\lambda\in \mathfrak{a}^*$, define \begin{equation}\label{eq:nlambda}n(\lambda)=\sum_{\begin{subarray}{c}
    \alpha\in \Sigma^+\\
    \langle \alpha,\lambda\rangle\neq 0
\end{subarray}} m(\alpha)\end{equation}and set \begin{equation}
\label{eq:kappa}\kappa(G)=\underset{\lambda \in  \mathfrak{a}^*\setminus \{0\}}{\inf} \frac{n(\lambda)}{2}.\end{equation}
We can express $\kappa$ in a more computable way. First, notice that $n$ is invariant under the Weyl group. It suffices to prove this on generators $s_\gamma$, $\gamma\in \Delta$. Then \begin{align*}
    n(s_{\gamma}\lambda) & =  \sum_{\begin{subarray}{c}
    \alpha\in \Sigma^+\setminus\{\gamma,2\gamma\}\\
    \langle \alpha,s_\gamma\lambda\rangle\neq 0
\end{subarray}} m(\alpha)+(1-\delta_{\langle\gamma,s_\gamma\lambda\rangle,0})(m(\gamma)+m(2\gamma)) \\
     & =  \sum_{\begin{subarray}{c}
    \alpha\in \Sigma^+\setminus\{\gamma,2\gamma\}\\
    \langle s_\gamma\alpha,\lambda\rangle\neq 0
\end{subarray}} m(\alpha)+(1-\delta_{-\langle \gamma,\lambda\rangle,0})(m(\gamma)+m(2\gamma)) \\
& =  \sum_{\begin{subarray}{c}
    \alpha\in \Sigma^+\setminus\{\gamma,2\gamma\}\\
    \langle \alpha,\lambda\rangle\neq 0
\end{subarray}} m(s_\gamma\alpha)+(1-\delta_{\langle \gamma,\lambda\rangle,0})(m(\gamma)+m(2\gamma)) \\
& =  \sum_{\begin{subarray}{c}
    \alpha\in \Sigma^+\setminus\{\gamma,2\gamma\}\\
    \langle \alpha,\lambda\rangle\neq 0
\end{subarray}} m(\alpha)+(1-\delta_{\langle \gamma,\lambda\rangle,0})(m(\gamma)+m(2\gamma)) \\
& =  n(\lambda)
\end{align*}using the fact that $s_\gamma$ is a permuation of $\Sigma^+\setminus \{\gamma,2\gamma\}$ and the invariance of multiplicities under the Weyl group. Notice that $m(2\gamma)$ can be zero if $2\gamma\not\in \Sigma$. Since every orbit under the Weyl group meets $\overline{\mathfrak{a}^+}$, $\kappa(G)$ is also the infimum over $\lambda\neq 0$ such that $\langle \alpha,\lambda\rangle \geq 0$ for any $\alpha\in \Sigma^+$. For such a $\lambda$, if $\alpha=\beta+\gamma$ with $\alpha,\beta,\gamma\in \Sigma^+$, then $\langle \alpha,\lambda\rangle=0$ implies $\langle \beta,\lambda\rangle=0=\langle \gamma,\lambda\rangle$. Thus, if $n(\lambda)$ is minimal, there is a unique $\alpha_i\in \Delta$ such that $\langle \alpha_i,\lambda\rangle\neq 0$. Therefore, we get \begin{equation}
    \label{kappamin} \kappa(G)=\frac{1}{2}\underset{1\leq i \leq \ell}{\min}\underset{\begin{subarray}{c}
    \alpha\in \Sigma^+\\n_i(\alpha)\geq 1
\end{subarray}}{\sum} m(\alpha).
\end{equation}The values of $\kappa$ were computed for simple groups in \cite[Section 4.3, Tab. 3]{dumas2023regularity}, in which $\kappa(G)$ is denoted $r(M)$, where $M$ is the compact dual of the symmetric space $G/K$. For the sake of completeness, we reproduce this table in Appendix \ref{sec:calculkappa} with the non-compact group $G$ as a label.\smallskip

Let $\mathfrak{n}=\bigoplus_{\alpha\in \Sigma^+} \mathfrak{g}^\alpha$. Denote $A=\exp \mathfrak{a}$, $A^+=\exp \mathfrak{a}^+$, $\overline{A^+}=\exp \overline{\mathfrak{a}^+}$ and $N=\exp \mathfrak{n}$. The Iwasawa decomposition says that \begin{equation}\label{eq:iwasawa}\fon{K\times A\times N}{G}{(k,a,n)}{kan}\end{equation}is a diffeomorphism (\cite[Thm. 6.46]{knapp2002lie}). Let $H:G\mapsto \mathfrak{a}$ be the Iwasawa projection, that is to say the unique map such that $\forall g\in G$, $g\in K\exp H(g)N$. Then $H$ is smooth.

The pair $(G,K)$ is a Gelfand pair. Let $\rho=\frac{1}{2}\sum_{\alpha\in \Sigma^+}m(\alpha)\alpha$. Let $\mathfrak{a}_\C^*$ be the space of $\R$-linear forms on $\mathfrak{a}_\C$. Then the spherical functions of $(G,K)$ are \begin{equation}\label{eq:fonctionspherique}\varphi_\lambda:g\mapsto \int_K e^{(i\lambda-\rho)(H(gk))}dk\end{equation} for $\lambda\in \mathfrak{a}_\C^*$, where $dk$ is the Haar measure on $K$ with total mass $1$ (\cite[Ch. IV, Thm. 4.3]{helgason2000groups}).

Since we want to study unitary matrix coefficients, we only want to consider positive definite spherical functions. In particular, we want to know where $\lambda$ is located when $\varphi_\lambda$ is positive-definite. A complete answer is only know in a few special cases, see for example \cite{kostant, flensted-jensen} in rank $1$. In general, it is still an open problem to completely classify such $\lambda$. However, some partial results are known. The following proposition is found in \cite[Section IV, Thm. 8.1 and B.9.(i)] {helgason2000groups} and will be sufficient for our purposes.
\begin{prop}\label{prop:defpos}Let $\lambda\in \mathfrak{a}_\C^*$, then:
\begin{itemize}
    \item $\varphi_\lambda$ is bounded if and only if $\Ima \lambda\in C=\operatorname{Conv}(W\rho)$,
    \item If $\varphi_\lambda$ positive-definite, then $\varphi_\lambda$ bounded by $\varphi(e)=1$,
    \item If $\Ima \lambda=0$, then $\varphi_\lambda$ is positive-definite.
\end{itemize}
\end{prop}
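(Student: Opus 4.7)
The plan is to treat the three bullets in order of increasing difficulty; each requires a different tool. The middle statement is immediate: any continuous positive-definite function $\psi$ on a topological group satisfies $|\psi(g)|\leq \psi(e)$, and evaluating \eqref{eq:fonctionspherique} at $g=e$ gives $\varphi_\lambda(e)=1$ since $H$ vanishes on $K$ (in the Iwasawa decomposition of a $k\in K$ the $A$ and $N$ factors are trivial).

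For the third bullet I would exhibit $\varphi_\lambda$, for $\lambda\in\mathfrak{a}^*$ real, as a diagonal matrix coefficient of a unitary representation. The natural candidate is the spherical unitary principal series $\pi_\lambda$ induced from the minimal parabolic $P=MAN$ via the character $man\mapsto e^{i\lambda(\log a)}$; the $\rho$-shift built into the induction cocycle is precisely what makes $\pi_\lambda$ act unitarily on $L^2(K/M)$ when $\lambda$ is real. The constant function $\mathbf{1}$ is a unit $K$-fixed vector, and a change of variables using the Iwasawa decomposition of $g^{-1}k$ identifies $\langle \pi_\lambda(g)\mathbf{1},\mathbf{1}\rangle$ with the integral \eqref{eq:fonctionspherique} (up to $\lambda\mapsto-\lambda$, which is harmless because $\varphi_\lambda=\varphi_{w\lambda}$ for every $w\in W$). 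As a diagonal coefficient of a unitary representation, the result is positive-definite.

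The first bullet is the substantial one. For the easy direction, taking absolute values inside \eqref{eq:fonctionspherique} yields $|\varphi_\lambda(g)|\leq \varphi_{i\Ima\lambda}(g)$; writing $\Ima\lambda=\sum_w t_w\,w\rho$ as a convex combination, one factorises $e^{-(\Ima\lambda+\rho)(H(gk))}=\prod_w e^{-t_w(w\rho+\rho)(H(gk))}$, and Hölder's inequality in $k$ together with the $W$-invariance $\varphi_{iw\rho}=\varphi_{i\rho}$ of spherical functions reduces the problem to boundedness of $\varphi_{i\rho}$, which is clear from the $-2\rho$ exponent appearing in its integrand. For the converse, the strategy is to evaluate $\varphi_\lambda$ along rays $\exp(tH)$ with $H\in\overline{\mathfrak{a}^+}$ and invoke the Harish-Chandra asymptotic expansion
\[\varphi_\lambda(\exp tH)=\sum_{w\in W}\mathbf{c}(w\lambda)\,e^{(iw\lambda-\rho)(tH)}\bigl(1+o(1)\bigr)\qquad(t\to+\infty),\]
from which boundedness forces $(\Ima(w\lambda)+\rho)(H)\geq 0$ for every $w\in W$ and $H\in\overline{\mathfrak{a}^+}$; a short convex-geometry argument then identifies this family of inequalities with $\Ima\lambda\in\operatorname{Conv}(W\rho)$. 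The hard part will be the behaviour on the walls of $\overline{\mathfrak{a}^+}$, where some of the $\mathbf{c}(w\lambda)$ vanish or have poles: one must rule out accidental cancellations between the various $w$-summands, which requires the full strength of Gangolli's estimates on the $c$-function and is the main technical obstacle.
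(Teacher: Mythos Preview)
The paper does not prove this proposition: it is quoted with a citation to \cite[Section IV, Thm.~8.1 and B.9(i)]{helgason2000groups} and used as a black box. So there is no in-paper argument to compare yours against.

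Your sketch follows the standard line (which is, essentially, what appears in the cited reference) and is correct in outline. One small sharpening: for the sufficiency direction of the first bullet, the boundedness of $\varphi_{i\rho}$ is not merely ``clear from the $-2\rho$ exponent''---in fact $\varphi_{i\rho}\equiv 1$, because $e^{-2\rho(H(gk))}$ is precisely the Jacobian of the diffeomorphism $kM\mapsto \kappa(gk)M$ of $K/M$, so its integral over $K$ equals $1$ for every $g$. Your identification of the delicate point in the necessity direction (vanishing or singular $\mathbf{c}$-function terms on Weyl walls, and possible cancellations among the $w$-summands) is accurate; this is indeed where the work lies in Helgason's proof.
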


\section{\texorpdfstring{$K$}{K}-finite matrix coefficients}\label{sec:kfinite}
\subsection{\texorpdfstring{$K$}{K}-bi-invariant functions on Lie groups vs. Lie algebras}
Given the previous notations, a semisimple Lie group with finite center has a $KAK$ decomposition. This means that the $K$-bi-invariant functions can be studied as functions on $A$. The following proposition is \cite[Thm. 7.39]{knapp2002lie}.
\begin{prop}[KAK decomposition]\label{prop:KAKv1} For any $g\in G$, there are $k_1,k_2\in K$ and a unique $a\in \overline{A^+}$ such that $g=k_1ak_2^{-1}$. Furthermore, if $a\in A^+$, $k_1$ is unique up to multiplication on the right by an element of $M=Z_K(A)$.
\end{prop}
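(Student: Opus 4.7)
The plan is to derive the $KAK$ decomposition from two more elementary ingredients: the global Cartan decomposition $G = K\exp(\mathfrak{p})$, and the conjugacy theorem stating that every $\Ad(K)$-orbit in $\mathfrak{p}$ meets $\overline{\mathfrak{a}^+}$ in exactly one point. Both are standard consequences of the Cartan involution $\theta$ and of the fact that $\mathfrak{a}$ is a maximal abelian subspace of $\mathfrak{p}$, and I would invoke them rather than reproving them.

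First, given $g \in G$, I use the Cartan decomposition to write $g = k \exp(X)$ with $k \in K$ and $X \in \mathfrak{p}$ uniquely determined by $g$. Applying the conjugacy theorem to $X$, I find $k_2 \in K$ and a unique $H \in \overline{\mathfrak{a}^+}$ with $X = \Ad(k_2)H$. Then $\exp(X) = k_2 \exp(H) k_2^{-1}$, so setting $k_1 = k k_2$ and $a = \exp(H) \in \overline{A^+}$ gives $g = k_1 a k_2^{-1}$.

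For the uniqueness of $a$, I would study the element $g\theta(g^{-1})$. Since $\theta|_K = \Id$ and $\theta(\exp H) = \exp(-H)$ for $H \in \mathfrak{a}$, one computes $g\theta(g^{-1}) = k_1 a^2 k_1^{-1} = \exp(\Ad(k_1)(2H))$, which depends only on $g$. The uniqueness clause in the conjugacy theorem, applied to $\Ad(k_1)(2H) \in \mathfrak{p}$, forces the $\overline{\mathfrak{a}^+}$-representative $2H$, and hence $a$, to be unique.

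The more delicate step is the uniqueness of $k_1$ modulo $M$ when $a \in A^+$. From $k_1 a k_2^{-1} = k_1' a (k_2')^{-1}$, setting $u = (k_1')^{-1} k_1$ and $v = k_2^{-1} k_2'$ gives $u = a v a^{-1}$; applying $\theta$ yields $u = a^{-1} v a$, and combining the two produces $u = a^2 u a^{-2}$. Thus $u$ centralizes $a^2 = \exp(2H)$ in $G$, which by injectivity of $\exp$ on $\mathfrak{p}$ is equivalent to $\Ad(u)(2H) = 2H$. The main obstacle is then the structural fact that the $\Ad(K)$-stabilizer of a regular element of $\mathfrak{a}^+$ equals $M = Z_K(\mathfrak{a})$; this rests on the description $W = N_K(\mathfrak{a})/M$ and the free action of $W$ on $\mathfrak{a}^+$. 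Granting it, the regularity of $H \in \mathfrak{a}^+$ gives $u \in M$, which is exactly the claimed uniqueness up to $M$.
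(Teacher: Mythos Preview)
The paper does not prove this proposition; it simply records it as \cite[Thm.~7.39]{knapp2002lie}. Your argument is correct and is essentially the standard proof one finds in Knapp: reduce existence to the global Cartan decomposition $G=K\exp(\mathfrak{p})$ together with the fact that every $\Ad(K)$-orbit in $\mathfrak{p}$ meets $\overline{\mathfrak{a}^+}$ in exactly one point, and deduce uniqueness by examining $g\theta(g^{-1})=k_1a^2k_1^{-1}$ and then using that the $\Ad(K)$-stabilizer of a regular $H\in\mathfrak{a}^+$ equals $M$. There is nothing to add.
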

Denote $G_r=KA^+K$. Then $G_r$ is a dense open subset of $G$ which we call the set of regular elements. Let also $P:G\to \overline{\mathfrak{a}^+}$ be the map such that for any $g\in G$, $g\in K\exp P(g)K$, which is well-defined by the above proposition. Note that by definition, $P(G_r)=\mathfrak{a}^+$.

Let $\varphi$ be a $K$-bi-invariant function on $G$. Let also $\psi=\varphi\circ \exp\vert_{\overline{\mathfrak{a}^+}}$. Then $\varphi=\psi\circ P$ by $K$-bi-invariance. Hence, we can study $\psi$ and recover information on $\varphi$ using $P$. Since we want to study regularity of functions, we will use Lemma \ref{lem:precomposition}. However, $P$ is not smooth on $G$, but we will show that it is smooth on $G_r$. We will then only get results on $G_r$.

\begin{lem}\label{lem:kaksubmersion}The map $$\fonction{q}{K\times K\times \mathfrak{a}^+}{G_r}{(k_1,k_2,H)}{k_1\exp(H)k_2^{-1}}$$is a submersion.
\end{lem}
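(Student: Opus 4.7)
The plan is to verify surjectivity of $dq$ at an arbitrary $(k_1, k_2, H) \in K \times K \times \mathfrak{a}^+$ by using left translation invariance to reduce the question to a linear-algebra statement in $\mathfrak{g}$. Setting $g = k_1 \exp(H) k_2^{-1}$ and identifying $T_{k_i} K \cong \mathfrak{k}$ via left translation, I compose $dq_{(k_1, k_2, H)}$ with $dL_{g^{-1}}$ so the target becomes $\mathfrak{g} = T_e G$. Differentiating along the curves $k_1 \exp(tX_1)$, $k_2 \exp(tX_2)$, $H + tY$ and using $[\mathfrak{a}, \mathfrak{a}] = 0$ to factor $\exp(H + tY) = \exp(H)\exp(tY)$, a routine computation should yield the translated differential
\begin{equation*}
(X_1, X_2, Y) \longmapsto \Ad(k_2)\bigl(\Ad(\exp(-H)) X_1 - X_2 + Y\bigr).
\end{equation*}
Since $\Ad(k_2)$ is an automorphism of $\mathfrak{g}$, surjectivity of $dq$ is equivalent to the identity $\Ad(\exp(-H)) \mathfrak{k} + \mathfrak{k} + \mathfrak{a} = \mathfrak{g}$.

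Next, I use the Cartan decomposition $\mathfrak{g} = \mathfrak{k} \oplus \mathfrak{p}$: since $\mathfrak{k}$ is already present, it suffices to show that the $\mathfrak{p}$-projection $\pi_\mathfrak{p}(\Ad(\exp(-H))\mathfrak{k})$ together with $\mathfrak{a}$ spans $\mathfrak{p}$. Decomposing $\mathfrak{k} = \mathfrak{m} \oplus \bigoplus_{\alpha \in \Sigma^+} \mathfrak{k}_\alpha$ with $\mathfrak{k}_\alpha = (\mathfrak{g}^\alpha \oplus \mathfrak{g}^{-\alpha}) \cap \mathfrak{k}$ and $\mathfrak{p}_\alpha = (\mathfrak{g}^\alpha \oplus \mathfrak{g}^{-\alpha}) \cap \mathfrak{p}$, and writing a generator $X + \theta X$ of $\mathfrak{k}_\alpha$ (with $X \in \mathfrak{g}^\alpha$) in terms of its components $X_k = (X + \theta X)/2$ and $X_p = (X - \theta X)/2$, the eigenvalue $\Ad(\exp(-H))|_{\mathfrak{g}^\alpha} = e^{-\alpha(H)}\operatorname{Id}$ gives
\begin{equation*}
\Ad(\exp(-H))(X + \theta X) = 2\cosh(\alpha(H)) X_k - 2\sinh(\alpha(H)) X_p,
\end{equation*}
so the $\mathfrak{p}$-projection equals $-2\sinh(\alpha(H)) X_p$. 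Since $\mathfrak{m}$ commutes with $\mathfrak{a}$, $\Ad(\exp(-H))$ acts trivially on $\mathfrak{m}$ and contributes nothing to $\pi_\mathfrak{p}$.

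The only essential use of regularity $H \in \mathfrak{a}^+$ enters at this stage: it forces $\alpha(H) > 0$, hence $\sinh(\alpha(H)) \neq 0$, for every $\alpha \in \Sigma^+$ simultaneously. This yields $\pi_\mathfrak{p}(\Ad(\exp(-H))\mathfrak{k}) = \bigoplus_{\alpha \in \Sigma^+} \mathfrak{p}_\alpha = \mathfrak{p} \ominus \mathfrak{a}$; adjoining $\mathfrak{a}$ recovers all of $\mathfrak{p}$, and hence the desired identity. As a sanity check, $dq$ will then have a kernel of dimension $\dim \mathfrak{m}$, matching exactly the $M$-ambiguity in Proposition \ref{prop:KAKv1}. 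I do not foresee any serious obstacle: the computation is essentially the same $\sinh$ factor that appears in the Jacobian of the KAK decomposition, and the only conceptual point is that regularity is precisely the condition that keeps all these $\sinh$ factors nonzero.
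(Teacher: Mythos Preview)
Your proposal is correct and follows essentially the same approach as the paper: both compute the differential, strip off the $\Ad(k_2)$ automorphism to reduce to $\Ad(\exp(-H))\mathfrak{k}+\mathfrak{k}+\mathfrak{a}=\mathfrak{g}$, and then use the root-space decomposition together with the identity $\Ad(\exp(-H))(X+\theta X)=\cosh(\alpha(H))(X+\theta X)-\sinh(\alpha(H))(X-\theta X)$ and regularity of $H$ to conclude. The only cosmetic difference is that the paper works with an explicit basis $Z_{\alpha,i}^{\pm}$ of $\mathfrak{k}^\alpha,\mathfrak{p}^\alpha$ while you phrase the same computation in terms of the projections $X_k,X_p$.
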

\begin{proof}If $g\in G$, denote $L_g$ and $R_g$ the translations by $g$ on the left and right respectively. Let $m:G\times G\to G$ be the multiplication map, its differential at $(a,b)$ is $$\fonction{T_{(a,b)}m}{T_aG\times T_bG}{T_{ab}G}{(X_a,X_b)}{T_aR_b(X_a)+T_bL_a(X_b)}.$$We can identify $T_gG$ with $\mathfrak{g}$ by the isomorphism $T_eL_g$. Under this identification, we have $\forall g,h\in G$, $T_hL_g=\Id$ and $T_hR_g=\Ad(g^{-1})$, so that the tangent map becomes $T_{(a,b)}m(X_a,X_b)=\Ad(b^{-1})(X_a)+X_b$. Furthermore, if $k\in K$, since $L_k(K)=K$, $T_kK\subset T_kG$ is identified with $\mathfrak{k}\subset \mathfrak{g}$. Thus by the chain rule we have $$\fonction{T_{(k_1,k_2,H)}q}{\mathfrak{k}\times \mathfrak{k}\times \mathfrak{a}}{\mathfrak{g}}{(X_1,X_2,Y)}{\Ad(k_2)(\Ad(\exp(-H))(X_1)+T_H\exp(Y))-X_2}.$$We know that $\Ad(k)$ is an isomorphism of $\mathfrak{g}$ and an isomorphism of $\mathfrak{k}$ in restriction. Furthermore, $T_H\exp:\mathfrak{a}\mapsto \mathfrak{a}$ is also an isomorphism. Thus, the map $T_{(k_1,k_2,H)}q$ is surjective if and only if $u=\Ad(k_2^{-1})\circ T_{(k_1,k_2,H)}q\circ (\Id,\Ad(k_2),(T_H\exp)^{-1})$ is surjective. We have $$u(X_1,X_2,Y)=\Ad(\exp(-H))(X_1)-X_2+Y.$$
For $\alpha\in \Sigma^+$, let $\mathfrak{k}^\alpha=\mathfrak{k}\cap (\mathfrak{g}^\alpha \oplus \mathfrak{g}^{-\alpha})$ and $\mathfrak{p}^\alpha=\mathfrak{p}\cap (\mathfrak{g}^\alpha \oplus \mathfrak{g}^{-\alpha})$. From \cite[Ch. VI, Prop. 1.4]{loos1969symmetric2}, we get $$\mathfrak{k}=\mathfrak{m}\oplus \bigoplus_{\alpha\in \Sigma^+} \mathfrak{k}^\alpha=\mathfrak{m}\oplus \mathfrak{l},$$ $$\mathfrak{p}=\mathfrak{a}\oplus \bigoplus_{\alpha\in \Sigma^+} \mathfrak{p}^\alpha=\mathfrak{a}\oplus \mathfrak{b}.$$We also get that for $\alpha\in \Sigma^+$, there exists $Z_{\alpha,1},\cdots,Z_{\alpha,m(\alpha)}$ basis of $\mathfrak{g^\alpha}$, such that setting $Z_{\alpha,i}^{+}=Z_{\alpha,i}+\theta(Z_{\alpha,i})$ and $Z_{\alpha,i}^-=Z_{\alpha,i}-\theta(Z_{\alpha,i})$, $\{Z_{\alpha,i}^{+}\}$ is a basis of $\mathfrak{k}_\alpha$ and $\{Z_{\alpha,i}^{-}\}$ is a basis of $\mathfrak{p}_\alpha$.\\
Let also $H_1,\cdots,H_\ell$ be a basis of $\mathfrak{a}$ and $Y_1,\cdots,Y_r$ a basis of $\mathfrak{m}$. Then for $H\in \mathfrak{a}$, we have $[H,Y_i]=0$, $[H,Z_{\alpha,i}^{+}]=\alpha(H)Z_{\alpha,i}^{-}$ and $[H,Z_{\alpha,i}^{-}]=\alpha(H)Z_{\alpha,i}^{+}$. Thus, we see that \begin{itemize}
    \item $u(0,0,H_i)=H_i$,
    \item $u(Y_i,0,0)=e^{-\ad(H)}(Y_i)=Y_i$,
    \item $u(0,Y_i,0)=-Y_i$,
    \item $u(Z_{\lambda,i}^{+},0,0)=e^{-\ad(H)}(Z_{\alpha,i}^{+})=\cosh(\alpha(H))Z_{\alpha,i}^{+}-\sinh(\alpha(H))Z_{\alpha,i}^{-}$,
    \item $u(0,Z_{\alpha,i}^{+},0)=-Z_{\alpha,i}^{+}$.
\end{itemize} 
Since $H\in \mathfrak{a}^+$, $\alpha(H)\neq 0$ for any $\alpha\in \Sigma^+$ and $u$ is indeed surjective.
\end{proof}

\begin{prop}\label{prop:KAKv2}The map $P:G\to \overline{\mathfrak{a}^+}$ is smooth on $G_r$. Furthermore, for each $g\in G_1$, there exists a neighbourhood $U_g$ of $g$ in $G_r$ and a choice of $g\mapsto k_i(g)$ such that $k_i$ is smooth on $U_g$, $i=1,2$ and for any $g\in U_g$, $g=k_1(g)\exp(P(g))k_2(g)^{-1}$.
\end{prop}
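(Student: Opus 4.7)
The plan is to deduce the proposition from Lemma \ref{lem:kaksubmersion} using the standard local section theorem for submersions, and then identify the resulting $\mathfrak{a}^+$-component with $P$ using the uniqueness clause of Proposition \ref{prop:KAKv1}.

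First I would fix $g \in G_r$ and, by Proposition \ref{prop:KAKv1}, pick $(k_1^0, k_2^0, H^0) \in K \times K \times \mathfrak{a}^+$ with $q(k_1^0, k_2^0, H^0) = g$. Lemma \ref{lem:kaksubmersion} says that $q$ is a submersion at this point, so by the local section theorem (equivalently, the constant rank theorem applied to $q$) there exist an open neighbourhood $U_g \subset G_r$ of $g$ and a smooth map
$$s : U_g \longrightarrow K \times K \times \mathfrak{a}^+, \qquad s(g') = (k_1(g'), k_2(g'), \widetilde{H}(g')),$$
with $s(g) = (k_1^0, k_2^0, H^0)$ and $q \circ s = \operatorname{Id}_{U_g}$. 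In particular $k_1, k_2 : U_g \to K$ and $\widetilde{H} : U_g \to \mathfrak{a}^+$ are smooth, and for every $g' \in U_g$ one has $g' = k_1(g') \exp(\widetilde{H}(g')) k_2(g')^{-1}$.

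It remains to check that $\widetilde{H}(g') = P(g')$ for all $g' \in U_g$. This is immediate from the uniqueness statement in Proposition \ref{prop:KAKv1}: since $\widetilde{H}(g') \in \mathfrak{a}^+ \subset \overline{\mathfrak{a}^+}$ and $g' \in K \exp(\widetilde{H}(g')) K$, the element $\widetilde{H}(g')$ must coincide with $P(g')$. Consequently $P\big|_{U_g} = \operatorname{pr}_3 \circ s$ is smooth, and since $g \in G_r$ was arbitrary $P$ is smooth on all of $G_r$.

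There is no real obstacle here: once Lemma \ref{lem:kaksubmersion} is available, everything reduces to invoking the submersion theorem and to quoting the uniqueness in the $KAK$ decomposition. The only minor point to keep in mind is that the local section must be chosen with its third component in $\mathfrak{a}^+$ rather than in all of $\mathfrak{a}$, but this is automatic because the domain of $q$ already constrains this component to $\mathfrak{a}^+$, and $\mathfrak{a}^+$ is open in $\mathfrak{a}$.
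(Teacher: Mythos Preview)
Your proof is correct. It differs from the paper's in that you invoke the local section theorem for the submersion $q$ directly, whereas the paper first passes to the quotient $(K\times K)/\Delta(M)\times\mathfrak{a}^+$, shows that the induced map $\tilde q$ is a smooth bijection between manifolds of the same dimension and hence (using Lemma~\ref{lem:kaksubmersion}) a diffeomorphism, and then reads off $P$ as the $\mathfrak{a}^+$-component of the global inverse $\tilde q^{-1}$; the smooth $k_i$ are obtained afterwards by composing with local sections of the quotient map $K\times K\to (K\times K)/\Delta(M)$. Your route is shorter and avoids the quotient construction entirely, at the cost of only producing $P$ as a patchwork of local smooth maps (which is fine, since smoothness is local). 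The paper's route buys a slightly stronger global statement, namely that $P$ is the second component of a global diffeomorphism, but this is not needed for the proposition as stated.
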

\begin{proof}Let $\Delta(M)=\{(m,m)\vert m\in M\}$ denote the diagonal subgroup of $K\times K$. By Lemma \ref{prop:KAKv1}, the map $$\fonction{\Tilde{q}}{(K\times K)/\Delta(M)\times \mathfrak{a}^+}{G_r}{\left((k_1,k_2) \operatorname{ mod }M,H\right)}{k_1\exp(H)k_2^{-1}}$$is a well-defined smooth bijection between manifolds of the same dimension.\\
Let $p:K\times K\to (K\times K)/\Delta(M)$ be the projection. It is a surjective submersion. Let $q$ be the submersion defined in Lemma \ref{lem:kaksubmersion}, we have $q=\Tilde{q}\circ (p,\Id)$. Thus, for any $(x,H)\in (K\times K)/\Delta(M)\times \mathfrak{a}^+$, we have $T_{(x,H)}\Tilde{q}$ surjective. But it is a linear map between vector spaces of the same dimension, so it is invertible. Thus, by the local inversion theorem and since $\Tilde{q}$ is bijective, $\Tilde{q}$ is a smooth diffeomorphism.

Let $(x,P):G_1\to (K\times K)/\Delta(M)\times \mathfrak{a}^+$ be a smooth inverse. We get that $P$ is a smooth map. From \cite[Proposition 4.26]{lee2003introduction}, since $p$ is a submersion, any $(k_1,k_2)\in K\times K$ is in the image of a smooth local section of $p$. Let $g\in G_1$, since $p$ is surjective, $x(g)=p(k_1,k_2)$. There exists a neighbourhood $V$ of $x(g)$ and a smooth section $s=(s_1,s_2):V\mapsto K\times K$ such that $s(x(g))=(k_1,k_2)$.\\
Let $U=x^{-1}(V)$ be a neighbourhood of $g$, then $k_i=s_i\circ x$ is smooth on $U$ and $g=k_1(g)\exp(P(g))k_2(g)^{-1}$.
\end{proof}

\begin{coro}\label{coro:liealgvsliegroup}Let $\varphi$ be a $K$-bi-invariant function on $G$, then $\varphi\in C^{(r,\delta)}(G_r)$ if and only if $\varphi\circ \exp\in C^{(r,\delta)}(\mathfrak{a}^+)$.\\Furthermore, if $\left(\varphi_i\right)_{i\in I}$ is a family of $K$-bi-invariant function on $G$, then $\left(\varphi_i\right)$ is bounded in $C^{(r,\delta)}(G_r)$ if and only if $\left(\varphi_i\circ \exp\right)$ is bounded in $C^{(r,\delta)}(\mathfrak{a}^+)$.
\end{coro}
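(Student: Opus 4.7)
\medskip

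\noindent\textbf{Proof proposal.} The plan is to reduce both implications to the precomposition result of Lemma~\ref{lem:precomposition} applied to the two smooth maps $\exp|_{\mathfrak{a}^+}:\mathfrak{a}^+\to G_r$ and $P:G_r\to \mathfrak{a}^+$ supplied by Proposition~\ref{prop:KAKv2}. The key preliminary observation is that if $\varphi$ is $K$-bi-invariant, then for every $g\in G_r$ we may write $g=k_1(g)\exp(P(g))k_2(g)^{-1}$ (locally, using Proposition~\ref{prop:KAKv2}), hence
\[
\varphi(g)=\varphi(\exp(P(g)))=(\varphi\circ\exp|_{\mathfrak{a}^+})\circ P\,(g).
\]
Setting $\psi:=\varphi\circ\exp|_{\mathfrak{a}^+}:\mathfrak{a}^+\to\C$, this gives the pointwise identity $\varphi|_{G_r}=\psi\circ P$.

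For the direct implication, I would start from $\psi\in C^{(r,\delta)}(\mathfrak{a}^+)$ and invoke Lemma~\ref{lem:precomposition} with the smooth map $P:G_r\to\mathfrak{a}^+$ (smoothness provided by Proposition~\ref{prop:KAKv2}); this immediately yields $\varphi|_{G_r}=\psi\circ P\in C^{(r,\delta)}(G_r)$. For the converse, I would observe that $\exp|_{\mathfrak{a}^+}:\mathfrak{a}^+\to A^+\subset G_r$ is smooth (as a restriction of $\exp:\mathfrak{g}\to G$) and apply Lemma~\ref{lem:precomposition} again, this time precomposing $\varphi\in C^{(r,\delta)}(G_r)$ by $\exp|_{\mathfrak{a}^+}$, which gives $\psi\in C^{(r,\delta)}(\mathfrak{a}^+)$.

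For the boundedness statement, the same two precomposition operators are involved. Since Lemma~\ref{lem:precomposition} provides continuity of the maps $P_*:C^{(r,\delta)}(\mathfrak{a}^+)\to C^{(r,\delta)}(G_r)$ and $\exp_*:C^{(r,\delta)}(G_r)\to C^{(r,\delta)}(\mathfrak{a}^+)$ (both are Fréchet spaces, with the topology given by the compact-subset semi-norms of Definition~\ref{def:holdermultivar}), continuous linear maps send bounded sets to bounded sets. Applying $P_*$ to a bounded family $(\psi_i)$ with $\psi_i=\varphi_i\circ\exp|_{\mathfrak{a}^+}$, and conversely $\exp_*$ to a bounded family $(\varphi_i|_{G_r})$, yields the equivalence of boundedness in the two spaces.

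The main subtlety — which was already handled upstream in Proposition~\ref{prop:KAKv2} — is that $P$ fails to be smooth on all of $G$, so the statement must be restricted to the regular set $G_r$; one must also check that $\exp(\mathfrak{a}^+)\subset G_r$, which is immediate from $G_r=KA^+K$ with $A^+=\exp\mathfrak{a}^+$. Beyond that, the argument is a direct functorial consequence of Lemma~\ref{lem:precomposition} and Proposition~\ref{prop:KAKv2}, with no further analytic input needed.
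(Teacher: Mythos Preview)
Your proposal is correct and follows essentially the same approach as the paper: both directions are obtained by applying Lemma~\ref{lem:precomposition} to the smooth maps $\exp|_{\mathfrak{a}^+}$ and $P$ (the latter's smoothness on $G_r$ coming from Proposition~\ref{prop:KAKv2}), using the identity $\varphi|_{G_r}=\psi\circ P$ from $K$-bi-invariance. Your explicit treatment of the boundedness statement via continuity of the precomposition operators is a small elaboration the paper leaves implicit, but the argument is the same.
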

\begin{proof}Since $\exp$ is smooth, the first implication is a consequence of Lemma \ref{lem:precomposition}. For the converse, assume $\psi=\varphi\circ \exp\vert_{\mathfrak{a}^+} \in C^{(r,\delta)}(\mathfrak{a}^+)$. By the previous proposition, the map $P$ is smooth on $G_r$ and $\varphi=\psi\circ P$ by $K$-bi-invariance, thus $\varphi\in C^{(r,\delta)}(G_r)$ by Lemma \ref{lem:precomposition}.
\end{proof}

\subsection{\texorpdfstring{$K$}{K}-finite matrix coefficients}\label{sec:kfin}
In this section, we want to show that the optimal regularity of all $K$-bi-invariant matrix coefficients of unitary representations of $G$ coincides with the optimal regularity of all $K$-finite matrix coefficients of unitary representations. The ideas are very similar to \cite[Section 5]{dumas2023regularity} where the same result is shown for compact symmetric pairs. However, this proof relied heavily on \cite[Lemma 2.2]{dLMdlS}, which only works in the compact case. Thus the first step of our proof is to show a similar lemma for $G$ non-compact.

\begin{definition}Let $\pi$ be a unitary representation of $G$ on $\mathcal{H}$ and $(\rho,V)$ a representation of $K$. We say that $\xi\in\mathcal{H}$ is \begin{itemize}
    \item $K$-finite if $\vspan(\pi(K)\xi)$ is finite dimensional,
    \item of $K$-type $V$ if $\vspan(\pi(K)\xi)\simeq V$ as a representation of $K$.
\end{itemize}
\end{definition}
Note that this definition of $K$-type $V$ is not standard.

\begin{lem}\label{lem:densityirredcoef}
Let $G$ be a second countable locally compact group, $L$ a compact subset of $G$. Then the vector space generated by matrix coefficients of irreducible unitary representations of $G$ restricted to $L$ is dense in $C(L,\mathbb{C})$.
\end{lem}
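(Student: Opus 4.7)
The plan is to combine the Stone--Weierstrass theorem with a Krein--Milman argument and Raikov's theorem. Let $A\subset C(L,\C)$ denote the linear span of restrictions to $L$ of matrix coefficients of all (not necessarily irreducible) unitary representations of $G$. First, I would show that $A$ is dense in $C(L,\C)$: it is stable under pointwise products via tensor products of representations, stable under complex conjugation via contragredient representations, and contains the constants via the trivial representation. By the Gelfand--Raikov theorem, irreducible unitary representations of a locally compact group separate the points of $G$, so $A$ in particular separates points of $L$. Stone--Weierstrass then yields the density of $A$ in $C(L,\C)$.

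The second step is to reduce from coefficients of arbitrary unitary representations to coefficients of irreducible ones. By the polarization identity, any matrix coefficient is a linear combination of continuous positive-definite functions of the form $g\mapsto\langle\pi(g)v,v\rangle$. It therefore suffices to show that the restriction to $L$ of each such continuous positive-definite function can be uniformly approximated by linear combinations of matrix coefficients of irreducible unitary representations.

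For this, I would consider the convex set $\mathcal{P}_{\le 1}$ of continuous positive-definite functions $\psi$ on $G$ with $\psi(e)\le 1$. Viewed inside $L^\infty(G)$, it is weak-$*$ compact by Banach--Alaoglu, and its extreme points are $0$ together with the normalized matrix coefficients $g\mapsto\langle\pi(g)\xi,\xi\rangle$ with $\pi$ irreducible and $\Vert\xi\Vert=1$ (the classical extreme point characterization of pure positive-definite functions). By Krein--Milman, every element of $\mathcal{P}_{\le 1}$ is a weak-$*$ limit of convex combinations of such extreme points. Raikov's theorem then provides the crucial upgrade: on $\mathcal{P}_{\le 1}$, the weak-$*$ topology coincides with the topology of uniform convergence on compact subsets of $G$. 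Restricting this convergence to the compact set $L$ gives the desired uniform approximation, which combined with the first step yields the density claim.

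The main obstacle is precisely the passage from the soft Krein--Milman weak-$*$ approximation to a genuine uniform approximation on the compact set $L$; this is the role of Raikov's theorem, and it is where the hypothesis that $L$ be compact (and $G$ be locally compact) enters in an essential way. Second countability is not strictly needed for this approach, but it does ensure that the weak-$*$ topology is metrizable on $\mathcal{P}_{\le 1}$, allowing one to replace nets by sequences if desired.
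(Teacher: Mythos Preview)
Your proposal is correct and takes a genuinely different route from the paper's proof. Both arguments start the same way: the $*$-algebra generated by matrix coefficients of irreducible unitary representations is dense in $C(L,\C)$ by Gelfand--Raikov plus Stone--Weierstrass, so the real work is to pass from this algebra down to its linear span. The paper does this via Hahn--Banach and direct integral decomposition: if the span were not dense, some functional $f\in C(L)^*$ would annihilate it but not some product $\varphi$ of irreducible coefficients; writing $\varphi=\int_X \varphi_x\,d\mu(x)$ as a direct integral of irreducible coefficients (using second countability and \cite[Section 8.4]{kirillov1976elements}) and applying Fubini yields $f(\varphi)=\int_X f(\varphi_x)\,d\mu(x)=0$, a contradiction. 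Your approach instead uses polarization to reduce to positive-definite functions, then Krein--Milman on the weak-$*$ compact set $\mathcal{P}_{\le 1}\subset L^\infty(G)$ together with Raikov's theorem to upgrade weak-$*$ approximation by pure states to uniform approximation on $L$.

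Your argument has the advantage of avoiding direct integral theory entirely and, as you note, does not actually require second countability. The paper's argument is perhaps closer in spirit to how the lemma is used later (Lemma~\ref{lem:decinteg} already relies on the same direct integral machinery), so the additional hypothesis costs nothing in context.
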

\begin{proof}Let $S$ be the set of matrix coefficients of irreducible unitary representations. Let $E$ be the vector space generated by $S$. By Gelfand-Raikov's theorem, the $*$-algebra $A$ generated by $S$ is dense in $C(L)$.

Assume that $E$ is not dense. Then, by Hahn-Banach's theorem, there exists a linear form $f\in C(L)^*$ such that $f\neq 0$ and $f\vert_E=0$. By density of $A$, there exists a matrix coefficient $\varphi$ such that $f(\varphi)\neq 0$. Since $G$ is second countable locally compact, there are $X$ a standard Borel space and $\mu$ a $\sigma$-finite measure on $X$ such that $\forall g\in G$, $$\varphi(g)=\int_X \varphi_x(g)d\mu(x)$$with $\varphi_x$ a coefficient of an irreducible unitary representation, hence an element of $S$ (\cite[Section 8.4]{kirillov1976elements}). Furthermore, if $\varphi_x(g)=\langle \pi(g)\xi_x,\eta_x\rangle$, then $x\mapsto \Vert \xi_x\Vert \Vert \eta_x\Vert \in L^1(X,\mu)$.

There exists a complex finite measure $\nu$ on $L$ such that $f(\varphi)=\int_L \varphi(g)d\nu(g)$. Then $$\int_L\int_X \vert\varphi_x(g)\vert d\mu(x)d\vert\nu\vert(g)\leq \vert\nu\vert(L) \int_X \Vert \xi_x\Vert \Vert \eta_x\Vert d\mu(x) < +\infty.$$
Thus by Fubini's theorem, we have \begin{align*}
   f(\varphi)  & = \int_L\int_X \varphi_x(g)d\mu(x)d\nu(g) \\
     & =\int_X\int_L \varphi_x(g) d\nu(g)d\mu(x)\\
     & =  \int_X f(\varphi_x)d\mu(x)\\
     & =  \int_X 0 \,d\mu(x)
\end{align*}
which is a contradiction.
\end{proof}

Let $G$ be a semisimple Lie group with finite center and $K$ a maximal compact subgroup. Let $U=K\times K$ and $(\rho,V)$ an irreducible unitary representation of $U$ (hence finite-dimensional). The group $U$ acts on $G$ by conjugation. For $g\in G$, let $U_g=\Stab(g)$, $V_g=V^{U_g}$ and $P_g$ the orthogonal projection on $V_g$.
\begin{lem}\label{lem:smoothkfintokinv}
For any $g_0\in G$, there exists a smooth function $\psi:G\to B(V)$
 such that \begin{enumerate}
     \item $\forall u\in U,g\in G$, $\psi(u.g)=\psi(g)\circ \rho(u)^{-1}$,
     \item $\forall v_1,v_2\in V$, $g\mapsto \langle \psi(g)v_1,v_2\rangle$ is a matrix coefficient of a unitary representation of $G$,
     \item $\psi(g_0)=P_{g_0}$.
 \end{enumerate}
\end{lem}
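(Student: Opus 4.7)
The plan is to construct $\psi$ by averaging a scalar matrix coefficient against the representation $\rho(u)^{-1}$ over $U$. For any smooth $f : G \to \mathbb{C}$, set
$$\psi_f(g) := \int_U f(u^{-1}\cdot g)\, \rho(u)^{-1}\, du.$$
Smoothness of $\psi_f$ is immediate from smoothness of $f$, compactness of $U$, and differentiation under the integral sign. A change of variables $u \mapsto v u$ combined with $\rho(vu)^{-1} = \rho(u)^{-1} \rho(v)^{-1}$ gives $\psi_f(v \cdot g) = \psi_f(g) \rho(v)^{-1}$, so condition (1) holds for every $f$. For (2), assume $f(g) = \langle \pi(g) \xi, \eta \rangle$ with $\pi$ a unitary representation of $G$ on a Hilbert space $\mathcal{H}$ and $\xi$ (say) a smooth vector, and decompose the irreducible $U$-representation $V$ as an external tensor product $V_1 \boxtimes V_2$ of irreducible $K$-representations. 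Expanding $f(u^{-1}\cdot g) = \langle \pi(g) \pi(k_2) \xi, \pi(k_1) \eta \rangle$ for $u = (k_1, k_2)$ and factoring $\langle \rho(u)^{-1} v_1, v_2 \rangle$ as a product of matrix coefficients of $\rho_1, \rho_2$, the $k_i$-integrals produce vectors $\tilde\xi, \tilde\eta \in \mathcal{H}$ (with $\tilde\xi$ smooth, being an integral of smooth vectors) such that $\langle \psi_f(g) v_1, v_2 \rangle = \langle \pi(g) \tilde\xi, \tilde\eta \rangle$, which gives (2).

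For (3), substitute $v = u^{-1}$ and disintegrate the integral over the compact fibration $U \to U \cdot g_0$ with fibre $U_{g_0}$. With normalized Haar on $U_{g_0}$, the integral $\int_{U_{g_0}} \rho(h)\, dh$ equals the orthogonal projection $P_{g_0}$ onto $V^{U_{g_0}} = V_{g_0}$, so
$$\psi_f(g_0) = \int_{U/U_{g_0}} f(v \cdot g_0)\, \rho(v)\, P_{g_0}\, d\bar v \;\in\; B(V) \cdot P_{g_0} \cong \mathrm{Hom}(V_{g_0}, V),$$
the finite-dimensional subspace to which $P_{g_0}$ naturally belongs. Let $T : C(U \cdot g_0) \to \mathrm{Hom}(V_{g_0}, V)$ denote the continuous linear map $f \mapsto \psi_f(g_0)$. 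Because $\rho$ is irreducible, Burnside's theorem yields $\vspan\{\rho(v) : v \in U\} = B(V)$, hence $\vspan\{\rho(v) P_{g_0} : v \in U\} = B(V) P_{g_0}$; since $T$ on $L^1(U/U_{g_0})$ is integration against this operator family, it surjects onto $\mathrm{Hom}(V_{g_0}, V)$.

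The main obstacle is then to realize $T(f) = P_{g_0}$ with an actual matrix coefficient $f$, and not merely with some continuous function. This is where Lemma \ref{lem:densityirredcoef} enters: applied to the compact orbit $L = U \cdot g_0$, the vector space of matrix coefficients of irreducible unitary representations of $G$ is dense in $C(U \cdot g_0)$, and by density of smooth vectors in any $\mathcal{H}$, the smooth matrix coefficients remain dense. Since $T$ is continuous with finite-dimensional target, the image under $T$ of smooth matrix coefficients is dense in $\mathrm{Hom}(V_{g_0}, V)$, hence equal to it. Choose a smooth matrix coefficient $f$ with $T(f) = P_{g_0}$ and set $\psi := \psi_f$; this satisfies (1), (2), and (3). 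This finite-dimensional density argument is the substitute, in the noncompact setting, for the use of \cite[Lemma 2.2]{dLMdlS} in the compact case treated in \cite{dumas2023regularity}.
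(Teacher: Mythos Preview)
Your proof is correct and follows the same overall strategy as the paper: build $\psi$ by averaging a matrix coefficient against $\rho(u)^{-1}$ over the compact group $U=K\times K$, then use Lemma~\ref{lem:densityirredcoef} on the compact orbit $U\cdot g_0$ to realize the desired value at $g_0$. The equivariance check (1), the computation for (2) via the factorisation $V\simeq V_1\boxtimes V_2$, and the disintegration over $U/U_{g_0}$ producing $\int_{U_{g_0}}\rho(h)\,dh=P_{g_0}$ are all sound.

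The genuine difference with the paper lies in how condition (3) is reached. The paper works with $B(V)$-valued $\phi$ and proceeds very explicitly: it first writes down a measurable $\phi$ on the orbit (via a measurable section $s:O\to U$) giving $\psi(g_0)=P_{g_0}$ exactly, then approximates in $L^1$ by continuous functions, extends by Tietze, approximates coordinatewise by matrix coefficients of irreducibles (Lemma~\ref{lem:densityirredcoef}), and further by $K$-finite ones, each time tracking only that the \emph{rank} of $\psi(g_0)$ stays equal to $\dim V_{g_0}$; a final post-composition by some $A\in B(V)$ corrects $\psi(g_0)$ to $P_{g_0}$. Along the way the paper actually proves more than stated, namely that the coefficients of $\psi$ are $K$-finite coefficients of finite direct sums of irreducibles.

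Your route replaces this chain of approximations by a single linear-algebraic observation: the evaluation map $T:C(U\cdot g_0)\to B(V)P_{g_0}\cong\Hom(V_{g_0},V)$ is continuous with finite-dimensional target, and is surjective because irreducibility of $\rho$ gives (via Burnside) $\vspan\{\rho(v)P_{g_0}:v\in U\}=B(V)P_{g_0}$. Since smooth matrix coefficients are dense in $C(U\cdot g_0)$ (Lemma~\ref{lem:densityirredcoef} plus density of smooth vectors), their image under $T$ is a dense linear subspace of a finite-dimensional space, hence all of it, and one can hit $P_{g_0}$ on the nose. This is shorter and avoids the rank-correction trick; it yields exactly the stated (2) rather than the paper's stronger $K$-finite version, but only the stated version is used in Proposition~\ref{prop:regktypeV}.
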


\begin{proof}Let $F$ be the set of functions $\phi:G\mapsto B(V)$ such that for any $v_1,v_2\in V$, $g\mapsto \langle \phi(g)v_1,v_2\rangle$ is a $K$-finite coefficient of a finite direct sum of irreducible unitary representations of $G$. Such a coefficient is a finite sum of $K$-finite matrix coefficients of irreducible unitary representations, so it is smooth (\cite[Thm. 8.1, Prop. 8.5]{knapp2001representation}). Hence any $\phi\in F$ is smooth, and verifies $(2)$.

If $\phi\in F$, define $\psi(g)=\int_U \phi(u.g)\rho(u)du$. Then clearly $\psi$ verifies $(1)$. Let $e_1,\cdots,e_d$ be an orthonormal basis of $V$. Let $\pi_{ij}$ be a unitary representation and $\xi_{ij},\eta_{ij}$ be $K$-finite vectors such that $$\forall g\in G, \langle \phi(g)e_i,e_j\rangle=\langle \pi_{ij}(g)\xi_{ij},\eta_{ij}\rangle.$$

Let $u=(k_1,k_2)\in U$, then $\rho(u)=\rho(k_1,1_K)\rho(1_K,k_2)$. We define functions $K\to \mathbb{C}$ such that $$\rho(1_K,k)e_i=\sum_{j=1}^d \lambda_{ij}(k)e_j$$and$$\rho(k,1_K)e_i=\sum_{j=1}^d \mu_{ij}(k)e_j.$$Then \begin{multline*}
   \langle \psi(g)e_i,e_j\rangle   \\ \begin{aligned}&= \int_U \langle \phi(u.g)\rho(u)e_i,e_j\rangle du \\
     & =  \int_{K\times K}\sum_{p,q=1}^d \mu_{pq}(k_1)\lambda_{ip}(k_2) \langle \phi(k_1gk_2^{-1})e_q,e_j\rangle dk_1dk_2\\
     & =  \int_{K\times K}\sum_{p,q=1}^d \mu_{pq}(k_1)\lambda_{ip}(k_2) \langle \pi_{qj}(k_1gk_2^{-1})\xi_{qj},\eta_{qj}\rangle dk_1dk_2\\
     & = \sum_{p,q=1}^d\int_{K\times K} \langle \pi_{qj}(g)\left(\lambda_{ip}(k_2)\pi_{qj}(k_2^{-1}) \xi_{qj}\right),\overline{\mu_{pq}(k_1)}\pi_{qj}(k_1^{-1})\eta_{qj}\rangle dk_1dk_2\\
     & =  \sum_{p,q=1}^d \left\langle \pi_{qj}(g)\left(\int_K \lambda_{ip}(k_2)\pi_{qj}(k_2^{-1})\xi_{qj}dk_2\right),\left(\int_K \overline{\mu_{pq}(k_1)}\pi_{qj}(k_1^{-1})\eta_{qj} dk_1\right) \right\rangle
     \end{aligned}
\end{multline*}
Now since $\xi_{qj}$ is $K$-finite, the vector $\Tilde{\xi}_{qj}=\int_K \lambda_{ip}(k_2)\pi_{qj}(k_2^{-1})\xi_{qj}dk_2$ is $K$-finite, and similarly, $\Tilde{\eta}_{qj}=\int_K \overline{\mu_{pq}(k_1)}\pi_{qj}(k_1^{-1})\eta_{qj} dk_1$ is $K$-finite. For each $q,j$, $\pi_{qj}$ is a finite direct sum of irreducible representation, so $g\mapsto  \langle \psi(g)e_i,e_j\rangle$ is a $K$-finite coefficient of $\oplus_{p,q=1}^d \pi_{qj}$ which is a finite direct sum of irreducible representations. By linearity, this remains true for any $v_1,v_2\in V$, thus we showed that if $\phi\in F$, so is $\psi$. Hence, $\psi$ is smooth and verifies $(2)$.\smallskip

It remains to show that there exists $\phi\in F$ such that $\psi(g_0)=P_{g_0}$. Notice that if $u\in U_{g_0}$, then $$\psi(g_0)=\psi(u.g_0)=\psi(g_0)\circ \rho(u)^{-1}.$$
Thus $V_{g_0}^\bot=\sum_{u\in U_{g_0}}\Ima(\rho(u)-\Id)\subset \ker \psi(g_0)$.

First, let us find $\phi\in F$ such that $\rank\psi(g_0)=\dim V_{g_0}$. Consider $O\simeq U/U_{g_0}$ the $U$-orbit of $g_0$ in $G$. Let $s$ be a measurable section, that is to say $s:O\mapsto U$ such that $s(u.g_0)\in uU_{g_0}$. Let $\phi:O\mapsto B(V)$ be the map $x\mapsto \rho(s(x))^{-1}$. Then $\psi:x\mapsto \int_U \rho(s(u.x)^{-1}u)du$ is such that $\psi(g_0)$ is the identity on $V_{g_0}$, and by the above discussion $0$ on $V_{g_0}^\bot$. Thus $\psi(g_0)=P_{g_0}$. Let $\mu$ be the image of the Haar measure on $O$ by the map $p:u\mapsto u.g_0$. Then $\phi\in L^1(X;B(V),\mu)$. By density of continuous function, there are continuous maps $f:O\to B(V)$ arbitrarily close to $\phi$ in $\Vert.\Vert_1$. But then, \begin{align*}
    \left\Vert \int_U f(u.g_0)\rho(u)du-\int_U \phi(u.g_0)\rho(u)du \right\Vert & \leq  \int_U \Vert f(u.g_0)-\phi(u.g_0)\Vert du  \\
     & \leq  \int_U \Vert (f-\phi)\circ p\Vert du\\
     & \leq  \int_O \Vert f-\phi\Vert d\mu\\
     & \leq  \Vert f-\phi\Vert_1.
\end{align*}

So we can take $f$ close enough so that $\rank \int_U f(u.g_0)\rho(u)du=\rank P_{g_0}$. Then since $O$ is closed in $G$ normal, by Tietze extension theorem, we can extend $f$ to a continuous map $\phi:G\mapsto B(V)$.

Let $L$ be a compact subset of $G$ containing $g_0$ and $\varepsilon>0$. Let $\phi_{ij}:g\mapsto \langle \phi(g)e_i,e_j\rangle$. Then by Lemma \ref{lem:densityirredcoef}, there exists $\pi_{ij}$ a finite direct sum of irreducible representations of $G$ and $\varphi_{ij}:g\mapsto \langle \pi_{ij}\xi_{ij},\eta_{ij}\rangle$ such that \begin{equation}\label{eq:approxirred}
    \underset{g\in L}{\sup} \vert \phi_{ij}(g)-\varphi_{ij}(g)\vert \leq  \varepsilon.
\end{equation}\\
But by the Peter-Weyl theorem, $K$-finite vectors are dense in the representation space of $\pi_{ij}$. Thus, there are $\Tilde{\xi}_{ij}$ and $\Tilde{\eta}_{ij}$ $K$-finite such that $$\Vert \xi_{ij}-\Tilde{\xi}_{ij}\Vert \leq \min\left(\Vert\xi_{ij}\Vert,\frac{\varepsilon}{\Vert \eta_{ij}\Vert}\right)$$and$$\Vert \eta_{ij}-\Tilde{\eta}_{ij}\Vert \leq \frac{\varepsilon}{\Vert \xi_{ij}\Vert}.$$
Thus, setting $\Tilde{\varphi}_{ij}(g)=\langle \pi_{ij}(g)\Tilde{\xi}_{ij},\Tilde{\eta}_{ij}\rangle$, we have for $g\in L$, \begin{equation}\label{eq:approxkfini}\begin{aligned}
   \vert \varphi_{ij}(g)-\Tilde{\varphi}_{ij}(g) \vert & \leq  \vert \langle \pi_{ij}{\xi}_{ij},\eta_{ij}-\Tilde{\eta}_{ij}\rangle\vert + \vert \langle \pi_{ij}(\xi_{ij}-\Tilde{\xi}_{ij}),\Tilde{\eta}_{ij}\rangle\vert  \\
     & \leq  \Vert \xi_{ij}\Vert \Vert \eta_{ij}-\Tilde{\eta}_{ij}\Vert + \Vert \Tilde{\eta}_{ij}\Vert \Vert \xi_{ij}-\Tilde{\xi}_{ij}\Vert\\
     & \leq  \varepsilon+ \frac{\varepsilon}{\Vert \xi_{ij}\Vert} \Vert \xi_{ij}-\Tilde{\xi}_{ij}\Vert + \Vert \eta_{ij}\Vert \Vert \xi_{ij}-\Tilde{\xi}_{ij}\Vert\\
     & \leq  3\varepsilon
\end{aligned}\end{equation}
Thus with \eqref{eq:approxirred} and \eqref{eq:approxkfini}, we get \begin{equation}\label{eq:approxirr+kfini}\underset{g\in L}{\sup} \vert \phi_{ij}(g)-\Tilde{\varphi}_{ij}(g)\vert \leq  4\varepsilon.\end{equation}

Now for $v_1=\sum_{i=1}^d x_ie_i$ and $v_2=\sum_{i=1}^d y_ie_i$, we have $$\langle \phi(g)v_1,v_2\rangle=\sum_{i,j}x_i\overline{y_j}\phi_{ij}(g).$$Let $\varphi(g)$ be defined as the linear map on $V$ such that $\varphi(g)e_i=\sum_{j=1}^d \Tilde{\varphi}_{ij}(g)e_j$. Let $\pi=\bigoplus_{i,j} \pi_{ij}$, then $\xi=(x_i\xi_{ij})$ and $\eta=(y_j\eta_{ij})$ are two $K$-finite vectors and $\varphi:G\to B(V)$ is such that $$\langle \varphi(g)v_1,v_2\rangle=\sum_{i,j} x_i\overline{y_j}\Tilde{\varphi}_{ij}=\langle \pi(g)\xi,\eta\rangle.$$So we have $\varphi\in F$. Furthermore, for $g\in L$, \begin{align*}
    \Vert \phi(g)-\varphi(g)\Vert & = & \underset{\Vert v_1\Vert_2=\Vert v_2\Vert_2=1}{\sup} \vert \langle (\phi(g)-\varphi(g))v_1,v_2\rangle   \\
     & \leq & \underset{i,j}{\max } \vert \phi_{ij}(g)-\varphi_{ij}(g)\vert \underset{\Vert v_1\Vert_2=\Vert v_2\Vert_2=1}{\sup} \Vert v_1 \Vert_1\Vert v_2\Vert_1\\
     & \leq & 4C\varepsilon
\end{align*}where $C$ depends only on $V$. Thus, this shows than we can find $\varphi$ in $F$ arbitrarily close to $\phi$ on any compact subset containing $g_0$, in particular on the orbit $O$. Thus, $\int_U \varphi(u.g_0)\rho(u)du$ is of rank $\dim V_{g_0}$ for $\varepsilon$ small enough.

Finally, we get $\phi\in F$ such that $\psi(g_0)$ has rank $\dim V_{g_0}$ and is zero on $V_{g_0}^\bot$. Thus there is $A\in B(V)$ such that $A\psi(g_0)=P_{g_0}$. Replace $\phi$ by $A\phi$ and we get the result.
\end{proof}

Let $\pi$ be a unitary representation of $G$ on $\mathcal{H}$ and $\xi,\eta\in\mathcal{H}$ of $K$-type $V,W$ respectively, for $V,W$ irreducible representations of $K$. Denote $V_\xi=\vspan(\pi(K)\xi)$. Then there is an isomorphism $i_\xi:V\to V_\xi\subset \mathcal{H}$, denote $\xi_0=i_\xi^{-1}(\xi)$. Similarly, define $V_\eta$ and $i_\eta$. Then the map \begin{equation}\label{eq:defi_f}\fonction{f}{B(\mathcal{H})}{L(V,W^*)\simeq V^*\otimes W^*}{A}{i_\eta^*Ai_\xi}\end{equation}is $K\times K$ equivariant.

For the associated matrix coefficient, we have $\varphi(g)=\langle\pi(g)\xi,\eta\rangle=\langle f(\pi(g))\xi_0,\eta_0\rangle$.

Now denote $(\rho,V_\rho)$ the irreducible representation of $U=K\times K$ on $V^*\otimes W^*$. The $U$-equivariance of $f$ means that for any $(k,k')\in U$ and $A\in B(\mathcal{H})$, we have \begin{equation}\label{eq:equiv_f}f(\pi(k)A\pi(k')^{-1})=\rho(k,k')(f(A)).\end{equation}Furthermore, there are $v_1,\cdots,v_n\in V_\rho$ and $\xi_1,\cdots,\xi_n,\eta_1,\cdots,\eta_n\in\mathcal{H}$ such that\begin{equation}\label{eq:dec_f}f(A)=\sum_{i=1}^n \langle A\xi_i,\eta_i\rangle v_i.\end{equation}

\begin{prop}\label{prop:regktypeV}If for any $K$-bi-invariant matrix coefficient $\varphi$ of a unitary representation of $G$, the function $\varphi\circ\exp$ is in $C^{(r,\delta)}(\mathfrak{a}^+)$, then the map $f\circ \pi$ is in $C^{(r,\delta)}(G_r)$.
\end{prop}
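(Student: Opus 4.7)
The plan is to use Lemma~\ref{lem:smoothkfintokinv} to manufacture, for each base point $H_0\in\mathfrak{a}^+$, a finite family of $K$-bi-invariant matrix coefficients of unitary representations that encodes the vector-valued function $H\mapsto \Phi(\exp H):=f(\pi(\exp H))$ via a locally invertible smooth linear change of coordinates. Applying the hypothesis to these auxiliary coefficients, and then using the smooth local $KAK$ decomposition of Proposition~\ref{prop:KAKv2}, will transport the regularity to all of $G_r$.

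\textbf{Reduction to $\mathfrak{a}^+$.} Combining the equivariance $\Phi(u.g)=\rho(u)\Phi(g)$ from \eqref{eq:equiv_f} with the smooth local factorisation $g=k_1(g)\exp P(g)k_2(g)^{-1}$ of Proposition~\ref{prop:KAKv2} and the smoothness of $\rho$, it suffices to show that $H\mapsto \Phi(\exp H)$ belongs to $C^{(r,\delta)}(\mathfrak{a}^+,V_\rho)$. For $H\in\mathfrak{a}^+$, the diagonal subgroup $\Delta(M)=\{(m,m):m\in M\}$ stabilises $\exp H$, hence $\Phi(\exp H)\in V_M:=V_\rho^{\Delta(M)}$. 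Fix an orthonormal basis $w_1,\dots,w_d$ of $V_M$ and write $\Phi(\exp H)=\sum_j c_j(H)w_j$; the goal reduces to showing each $c_j\in C^{(r,\delta)}(\mathfrak{a}^+)$.

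\textbf{Construction of the $K$-bi-invariant coefficients.} Fix $H_0\in\mathfrak{a}^+$ and apply Lemma~\ref{lem:smoothkfintokinv} at $g_0=\exp H_0$ (for which $V_{g_0}=V_M$) to obtain a smooth $\psi:G\to B(V_\rho)$ with $\psi(u.g)=\psi(g)\rho(u)^{-1}$, $\psi(g_0)=P_{V_M}$, and whose entries $g\mapsto\langle\psi(g)v_1,v_2\rangle$ are matrix coefficients of unitary representations of $G$. Set
\[
\phi_i(g):=\langle\psi(g)\Phi(g),w_i\rangle,\qquad 1\le i\le d.
\]
Combining both equivariances gives $\psi(u.g)\Phi(u.g)=\psi(g)\rho(u)^{-1}\rho(u)\Phi(g)=\psi(g)\Phi(g)$, so $\phi_i$ is $K$-bi-invariant. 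Expanding $\Phi$ via the decomposition \eqref{eq:dec_f} shows that $\phi_i$ is a finite sum of products of matrix coefficients of $\pi$ with matrix coefficients of unitary representations of $G$ coming from $\psi$. A product of matrix coefficients of unitary representations is a matrix coefficient of their tensor product, and a finite sum of such is a matrix coefficient of the direct sum; in either case the representation is unitary. Hence $\phi_i$ is a $K$-bi-invariant matrix coefficient of a unitary representation of $G$, and the hypothesis yields $\phi_i\circ\exp\in C^{(r,\delta)}(\mathfrak{a}^+)$.

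\textbf{Local inversion.} Define the smooth matrix-valued function $T(H)_{ij}:=\langle\psi(\exp H)w_j,w_i\rangle$ on $\mathfrak{a}^+$. Since $\Phi(\exp H)\in V_M$, expanding in the basis $w_1,\dots,w_d$ produces the linear system
\[
\phi_i(\exp H)=\sum_{j=1}^d T(H)_{ij}\,c_j(H).
\]
At $H_0$, $\psi(\exp H_0)=P_{V_M}$ acts as the identity on $V_M$, so $T(H_0)=I_d$; by continuity $T(H)$ is invertible with smooth inverse on an open neighbourhood $W$ of $H_0$ in $\mathfrak{a}^+$. Solving yields $c_j(H)=\sum_i(T(H)^{-1})_{ji}\,\phi_i(\exp H)$ on $W$, which lies in $C^{(r,\delta)}(W)$ as a product of a smooth function with a $C^{(r,\delta)}$ function. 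Since $H_0$ was arbitrary and Hölder regularity is local (Remark~\ref{rem:holder}), each $c_j\in C^{(r,\delta)}(\mathfrak{a}^+)$, and combining with the reduction step concludes the proof. The main obstacle is in the second step: even though neither $\Phi$ nor $\psi$ is individually $K$-bi-invariant, their pairing $\phi_i$ is, and one must verify that $\phi_i$ genuinely lies in the class of matrix coefficients of unitary representations so that the hypothesis applies. The non-compactness of $G$ rules out the compact-case trick of direct averaging, which is precisely why Lemma~\ref{lem:smoothkfintokinv} was prepared beforehand.
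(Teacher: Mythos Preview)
Your proof is correct and follows essentially the same approach as the paper: both use Lemma~\ref{lem:smoothkfintokinv} to build $\psi$, form the $K$-bi-invariant function $g\mapsto\psi(g)f(\pi(g))$, verify via the tensor-product argument that its components are matrix coefficients of unitary representations, and then locally invert $\psi$ near a base point where $\psi(g_0)=P_{g_0}$. The only organisational difference is that you reduce to $\mathfrak{a}^+$ at the outset via the equivariance \eqref{eq:equiv_f} and Proposition~\ref{prop:KAKv2}, while the paper stays on $G_r$, applies Corollary~\ref{coro:liealgvsliegroup} to $\tilde f$, and performs the local inversion there; both routes are equivalent.
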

\begin{proof}
Let $g_0\in G_r$ and $\psi$ given by Lemma \ref{lem:smoothkfintokinv} for the representation $(\rho,V_\rho)$. Let $\Tilde{f}:g\mapsto \psi(g)(f(\pi(g)))$. By \eqref{eq:equiv_f} and $(1)$ of Lemma \ref{lem:smoothkfintokinv}, we have \begin{equation}\label{eq:kbiinv_tildef}\Tilde{f}(u.g)=\psi(u.g)(f(\pi(u.g)))=\psi(g)\rho(u)^{-1}\rho(u)(f(\pi(g))=\Tilde{f}(g)\end{equation}
so $\Tilde{f}$ is a $K$-bi-invariant map.

Let $(e_1,\cdots,e_d)$ be an orthonormal basis of $V_\rho$, by $(2)$ of Lemma \ref{lem:smoothkfintokinv} there are $(\pi_{ij},\mathcal{H}_{ij})$ unitary representations of $G$ and $a_{ij},b_{ij}\in \mathcal{H}_{ij}$ such that $$\langle\psi(g)v_i,e_j\rangle=\langle\pi_{ij}(g)a_{ij},b_{ij}\rangle$$so $\psi(g)v_i=\sum_{j=1}^d \langle\pi_{ij}(g)a_{ij},b_{ij}\rangle e_j$ and finally with \eqref{eq:dec_f}, \begin{equation}\label{eq:tildef}\Tilde{f}(g)=\sum_{i=1}^n\sum_{j=1}^d \langle(\pi_{ij}\otimes \pi)(g)(a_{ij}\otimes\xi_i),b_{ij}\otimes \eta_i\rangle e_j.\end{equation}

Hence, $\Tilde{f}$ is a sum of $K$-bi-invariant matrix coefficients of unitary representations of $G$, so by the hypothesis and Corollary \ref{coro:liealgvsliegroup}, $\Tilde{f}\in C^{(r,\delta)}(G_r)$.\smallskip

By Lemma \ref{prop:KAKv1}, if $a\in A^+=\exp \mathfrak{a}^+$, we have $U_a=\Delta(M)$. Thus, $V_a=V_0$ is independent of $a\in A^+$. If $g=(k_1,k_2).a=k_1ak_2^{-1}$, we have $(k,k')\in U_g$ if an only if $(k_1^{-1}kk_1,k_2^{-1}k'k_2)\in \Delta(M)$ and so $V_g=\rho(k_1,k_2)V_0$.

Let $g_0=k_0a_0k_0^{'-1}$ and $V_1=V_{g_0}$. Since $\psi(g_0)=P_{g_0}$, there is an orthonormal basis adapted to $V_1$ such that $$\psi(g_0)=\begin{pmatrix}\Id&0\\0&0\end{pmatrix}.$$ Furthermore, since $\psi$ is smooth, there is $A_{g_0}$ neighbourhood of $g_0$ such that $$\psi(g)=\begin{pmatrix}A(g)&*\\ *&*\end{pmatrix}$$
with $g\mapsto A(g)$ smooth, $A(g)$ invertible for any $g\in A_{g_0}$. Up to restricting $A_{g_0}$, by Proposition \ref{prop:KAKv2}, we have $g=k_1(g)\exp(P(g))k_2(g)^{-1}=k_1(g)a(g)k_2(g)^{-1}$ with $k_1,k_2$ smooth on $A_{g_0}$.

By \eqref{eq:kbiinv_tildef}, for any $g\in A_{g_0}$, we have $$\Tilde{f}(g)=\Tilde{f}(a(g))=\Tilde{f}(k_0a(g)k_0^{'-1}).$$
But then $f(\pi(k_0a(g)k_0^{'-1}))\in V_{k_0a(g)k_0^{'-1}}=\rho(k_0,k_0')V_0=V_1$. Set $$\Phi(g)=\rho(k_1(g)k_0^{-1},k_2(g)k_0^{'-1})\begin{pmatrix}
    A(k_0a(g)k_0^{'-1})^{-1} & 0\\0&0
\end{pmatrix},$$ it is a smooth map on $A_{g_0}$ because $A$ is smooth invertible, $k_1,k_2$ are smooth and $\rho$ is a finite dimensional representation of $U$ thus smooth. Since $f(\pi(k_0a(g)k_0^{'-1}))\in V_1$, we have \begin{align*}
    \Phi(g)(\Tilde{f}(g)) & =  \Phi(g)(\Tilde{f}(\pi(k_0a(g)k_0^{'-1})))  \\
     & =  \Phi(g)\psi(g)(f(\pi(k_0a(g)k_0^{'-1})))\\
     & =  \rho(k_1(g)k_0^{-1},k_2(g)k_0^{'-1})(f(\pi(k_0a(g)k_0^{'-1})))\\
     & =  f(\pi(k_1(g)a(g)k_2(g)^{-1}))\\
     & =  f(\pi(g))
\end{align*}

Now let $B:B(V)\times V\to V$ be the bilinear map sending $(u,v)$ to $u(v)$. We showed that on $A_{g_0}$, $f\circ \pi=B\circ (\Phi,\Tilde{f})$. Since $\Phi$ is smooth on $A_{g_0}$ and $\Tilde{f}\in C^{(r,\delta)}(G_r)$, we get by Leibniz formula that $f\circ g\in C^{(r,\delta)}(A_{g_0})$.

So for any $g_0\in G_r$, there exists a neighbourhood $A_{g_0}$ such that $f\circ \pi \in C^{(r,\delta)}(A_{g_0})$. Thus, $f\circ \pi \in C^{(r,\delta)}(G_r)$.
\end{proof}

\begin{theorem}\label{thm:kfinite}The optimal regularity of $K$-bi-invariant matrix coefficient of unitary representations of $G$ on $G_r$ is equal to the optimal regularity of $K$-finite matrix coefficients of unitary representations of $G$ on $G_r$.
\end{theorem}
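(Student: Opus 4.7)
The plan is to prove the two containments of regularity spaces. One direction is immediate: every $K$-bi-invariant vector is in particular $K$-finite (spanning the trivial representation of $K$), so any bound on the regularity of $K$-finite matrix coefficients automatically applies to $K$-bi-invariant ones, and the optimal regularity for $K$-bi-invariant coefficients is at least that of $K$-finite coefficients. The substantive content lies in the reverse direction, where Proposition \ref{prop:regktypeV} is the main engine.

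Suppose every $K$-bi-invariant matrix coefficient of a unitary representation of $G$ lies in $C^{(r,\delta)}(G_r)$; by Corollary \ref{coro:liealgvsliegroup}, this is equivalent to $\varphi\circ \exp \in C^{(r,\delta)}(\mathfrak{a}^+)$ for every such $\varphi$, which is precisely the hypothesis needed to invoke Proposition \ref{prop:regktypeV}. Let $\varphi(g) = \langle \pi(g)\xi, \eta\rangle$ be an arbitrary $K$-finite matrix coefficient. First I would decompose the finite-dimensional unitary $K$-representations $\vspan \pi(K)\xi$ and $\vspan \pi(K)\eta$ into irreducible components, which yields decompositions $\xi = \sum_i \xi_i$ and $\eta = \sum_j \eta_j$ where each nonzero $\xi_i$ has $K$-type some irreducible $V_i$ and each nonzero $\eta_j$ has $K$-type some irreducible $W_j$. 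By sesquilinearity of the inner product, $\varphi$ becomes a finite sum of matrix coefficients $g \mapsto \langle \pi(g)\xi_i, \eta_j\rangle$ between vectors of definite $K$-types.

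For each such summand I would apply Proposition \ref{prop:regktypeV} with representations $V = V_i$ and $W = W_j$, producing the map $f$ of \eqref{eq:defi_f} with $f\circ \pi \in C^{(r,\delta)}(G_r, V_i^* \otimes W_j^*)$. Since $\langle \pi(g)\xi_i, \eta_j\rangle = \langle f(\pi(g))\, i_{\xi_i}^{-1}(\xi_i), i_{\eta_j}^{-1}(\eta_j)\rangle$ is the image of $f\circ\pi$ under a fixed continuous linear form on $V_i^*\otimes W_j^*$, it also lies in $C^{(r,\delta)}(G_r)$. Summing the finitely many pieces shows $\varphi \in C^{(r,\delta)}(G_r)$, giving the reverse inequality between optimal regularities.

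The main obstacle is already handled upstream: the technical work of promoting $K$-bi-invariant estimates to coefficients of fixed $K$-type is carried out in Proposition \ref{prop:regktypeV}, which itself relies on Lemma \ref{lem:smoothkfintokinv} to smoothly interpolate between $K$-finite and $K$-invariant objects. Given those inputs, the present theorem reduces to the routine isotypic decomposition and bookkeeping described above.
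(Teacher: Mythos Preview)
Your proposal is correct and follows essentially the same route as the paper: one inequality is trivial, and for the other you reduce via isotypic decomposition to vectors of fixed irreducible $K$-type and then invoke Proposition~\ref{prop:regktypeV} (using Corollary~\ref{coro:liealgvsliegroup} to match its hypothesis) together with the identity $\varphi(g)=\langle f(\pi(g))\xi_0,\eta_0\rangle$. This is exactly the paper's argument, with your explicit mention of Corollary~\ref{coro:liealgvsliegroup} being a small clarifying addition.
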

\begin{proof}One inequality is trivial since $K$-bi-invariant coefficients are $K$-finite.

For the other inequality, let $(r,\delta)$ such that any $K$-bi-invariant matrix coefficient of unitary representations $G$ is in $C^{(r,\delta)}(G_r)$. Let $\varphi:g\mapsto \langle \pi(g)\xi,\eta \rangle$ be a $K$-finite matrix coefficient of a unitary representation.

If $\xi,\eta$ are of $K$-type $V,W$ respectively, with $V,W$ irreducible representations of $K$, we showed that $\varphi(g)=\langle\pi(g)\xi,\eta\rangle=\langle f(\pi(g))\xi_0,\eta_0\rangle$ and in Proposition \ref{prop:regktypeV} that $f\circ \pi\in C^{(r,\delta)}(G_r)$, thus $\varphi\in C^{(r,\delta)}(G_r)$.

For the general case, if $\xi,\eta$ are $K$-finite, $V_\xi,V_\eta$ are finite dimensional representations of $K$, so they decompose into a finite number of irreducible representations. Thus, $\varphi$ is a finite sum of matrix coefficient of the previous case, so $\varphi\in C^{(r,\delta)}(G_r)$.
\end{proof}

\section{Boundedness of positive definite spherical functions}\label{sec:mainsec}
In this section, we study the spherical functions of the pair $(G,K)$ and their Hölder norms. We will use the estimates obtained by Duistermaat, Kolk and Varadarajan in \cite{DKV} using the method of stationary phase. We will denote $\psi_\lambda=\varphi_\lambda\circ \exp\vert_{\mathfrak{a}}$ the spherical functions on the Lie algebra.
\begin{theorem}\label{thm:reg}Let $G$ be a connected semisimple Lie group with finite center and $K$ a maximal compact subgroup. Let $r=\lfloor \kappa(G)\rfloor$, $\delta=\kappa(G)-r$. Let $C$ be a bounded subset of $\mathfrak{a}^*$. Then the family of spherical functions $\psi_\lambda$ of $(G,K)$ with $\Ima \lambda\in C$ is bounded in $C^{(r,\delta)}(\mathfrak{a}^+)$.
\end{theorem}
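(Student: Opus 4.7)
My plan is to work directly on $\mathfrak{a}$ via the integral representation $\psi_\lambda(X) = \int_K e^{(i\lambda - \rho)(H(\exp(X) k))} dk$ from \eqref{eq:fonctionspherique}. Writing $\lambda = \mu + i\nu$ with $\mu, \nu \in \mathfrak{a}^*$, the factor $e^{-(\rho+\nu)(H(\exp(X)k))}$ is smooth in $(X,k)$ and stays uniformly bounded, together with all its $X$-derivatives, for $X$ in a compact subset $L \subset \mathfrak{a}^+$, $k \in K$ and $\nu$ in the bounded set $C$. The oscillation is driven by the real phase $k \mapsto \mu(H(\exp(X)k))$, and the whole task reduces to oscillatory-integral bounds in $\mu$ uniform in the other parameters.

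The first step is to differentiate $\psi_\lambda$ up to order $r+1$ under the integral sign. Each $X$-derivative applied to $e^{(i\lambda-\rho)(H(\exp X k))}$ produces a factor linear in $\lambda$ from the exponent, times a smooth bounded function of $(X,k)$. By induction, for a multi-index $\beta$ with $|\beta|\leq r+1$ one obtains
\begin{equation*}
\partial_X^\beta \psi_\lambda(X) = \sum_{\gamma\leq\beta} \int_K a_{\beta,\gamma}(X,k,\nu)\, p_\gamma(\lambda) \, e^{(i\lambda - \rho)(H(\exp(X)k))}\, dk,
\end{equation*}
where $p_\gamma(\lambda)$ is a polynomial of degree $|\gamma|$ and $a_{\beta,\gamma}$ is smooth in $X$, continuous in $(k,\nu)$, with bounds uniform on $L\times K\times C$. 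Consequently
\begin{equation*}
|\partial_X^\beta \psi_\lambda(X)| \leq C_L (1+|\mu|)^{|\beta|}\, \max_{\gamma\leq\beta} |I_{\mu,a_{\beta,\gamma}}(X,\nu)|, \qquad I_{\mu,a}(X,\nu)=\int_K a(X,k,\nu)\,e^{i\mu(H(\exp(X)k))}\,dk.
\end{equation*}

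The crux is the uniform stationary-phase estimate $|I_{\mu,a}(X,\nu)| \leq C\,(1+|\mu|)^{-\kappa(G)}$ for $(X,\nu)\in L\times C$. For $X \in \mathfrak{a}^+$ the critical points of $k \mapsto \mu(H(\exp(X)k))$ can be computed explicitly; their description and the rank of the transverse Hessian, which is non-degenerate in the directions corresponding to roots $\alpha$ with $\langle\alpha,\mu\rangle\neq 0$, is exactly the analysis carried out in \cite{DKV} and yields decay of order $|\mu|^{-n(\mu)/2}$. The infimum defining $\kappa(G)$ in \eqref{eq:kappa} then gives the claimed uniform bound. The bounded-$|\mu|$ regime is handled separately by smoothness of $(\lambda,X)\mapsto \psi_\lambda(X)$, since on compact sets of $(X,\lambda)$ all derivatives are continuously bounded. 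Combining the two ingredients, one gets for $|\beta|\leq r+1$ and $X \in L$
\begin{equation*}
|\partial_X^\beta \psi_\lambda(X)| \leq C'_L (1+|\mu|)^{|\beta|-\kappa(G)}.
\end{equation*}
For $|\beta|\leq r$ this is uniformly bounded since $r\leq \kappa(G)$; for $|\beta|=r+1$ one gets the growth $(1+|\mu|)^{1-\delta}$, and a standard mean-value interpolation yields
\begin{equation*}
|\partial_X^r\psi_\lambda(X)-\partial_X^r\psi_\lambda(Y)| \leq C \min\!\bigl((1+|\mu|)^{-\delta},\; \|X-Y\|(1+|\mu|)^{1-\delta}\bigr) \leq C\|X-Y\|^\delta
\end{equation*}
on $L$, uniformly in $\lambda$ with $\Ima\lambda\in C$, which is exactly the claimed boundedness in $C^{(r,\delta)}(\mathfrak{a}^+)$.

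The main obstacle is the uniformity of the stationary-phase estimate as the direction $\hat\mu = \mu/|\mu|$ approaches a wall $\langle\alpha,\cdot\rangle=0$, where the transverse Hessian degenerates and $n(\mu)$ drops; this stratification is precisely what DKV handle by a careful case analysis of the phase, and it is their framework that makes the uniform bound available. A secondary technical point is verifying that the amplitudes $a_{\beta,\gamma}$ produced by repeated differentiation lie in the class of amplitudes for which the DKV estimates apply, but this follows from the smoothness of the Iwasawa projection $H$ and the compactness of $L\times K\times C$.
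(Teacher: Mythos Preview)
Your proposal is correct and follows essentially the same approach as the paper's proof: differentiate the integral formula \eqref{eq:fonctionspherique} to produce oscillatory integrals with smooth amplitudes and polynomial prefactors in $\lambda$, invoke the uniform stationary-phase estimates of \cite[Prop.~9.2]{DKV} to get decay $(1+|\mu|)^{-\kappa(G)}$, handle bounded $|\mu|$ by compactness, and interpolate between the $r$-th and $(r{+}1)$-st derivative bounds for the H\"older part. The paper's only cosmetic difference is that it parametrizes the real part as $t\xi$ with $\xi$ on the unit sphere of $\mathfrak{a}^*$ and covers $L\times S$ by finitely many DKV neighbourhoods, whereas you work with $\mu$ and $(1+|\mu|)$ directly; the substance is identical.
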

\begin{remark}By Proposition \ref{prop:defpos}, this implies that the family of bounded spherical functions, thus the subfamily of positive-definite spherical functions, is bounded in $C^{(r,\delta)}(\mathfrak{a}^+)$.
\end{remark}
\begin{proof}For $\lambda\in \mathfrak{a}_\C^*$, we write $\lambda=\xi+i\eta$ with $\xi,\eta\in \mathfrak{a}^*$ and for $t\in \R$, $\lambda_t=t\xi+i\eta$.\\Denote $f(\xi,\eta,t,Y,k)=e^{(i\lambda_t-\rho)(H(\exp(Y)k))}\in C^\infty(\mathfrak{a}^*\times \mathfrak{a}^*\times \R \times \mathfrak{a} \times K)$. We denote $D$ the operator which differentiates a function with respect to the variable $Y\in \mathfrak{a}$. Let $s\in \N$. Then for any $Y\in \mathfrak{a}$, $X=(X_1,\cdots,X_s)\in \mathfrak{a}^s$, \begin{equation}\label{eq:dpsi_1}D^s\psi_{\lambda_t}(Y)(X)=\int_K D^sf(\xi,\eta,t,Y,k)(X)dk.\end{equation}By induction on $s$, there is a polynomial $P\in C^\infty(\mathfrak{a}^*\times \mathfrak{a}^*\times \mathfrak{a} \times K\times \mathfrak{a}^s)[t]$ of degree $s$ such that \begin{equation}\label{eq:dpsi_2}D^sf(\xi,\eta,t,Y,k)(X)=P(t)e^{it\xi(H(\exp(Y)k))}.\end{equation}

For $0\leq j \leq s$, let $g_j(\xi,\eta,Y,X)\in C^\infty(K)$ be defined by \begin{equation}\label{eq:dpsi_g}g_j=\left.\frac{1}{j!} \frac{d^j}{dt^j}\left( (D^sf)e^{-it\xi(H(\exp(Y)k))} \right)\right\vert_{t=0}.\end{equation}Then we have \begin{equation}\label{eq:derivphient}D^s\psi_{\lambda_t}(Y)(X)=\sum_{j=0}^s t^j \int_K e^{it\xi(H(\exp(Y)k))}g_j(\xi,\eta,Y,X)(k)dk.\end{equation}

Let $S$ be the unit sphere in $\mathfrak{a}^*$ and $L$ a compact subset of $\mathfrak{a}^+$. Up to replacing $L$ by its convex hull, which is still a compact subset of $\mathfrak{a}^+$ by Carathéodory's theorem and the convexity of $\mathfrak{a}^+$, we can assume that $L$ is convex. We consider $C^\infty(K)$ endowed with the topology given by the family of seminorms $p_i:g\mapsto \underset{k\in K}{\sup} \Vert D^ig(k)\Vert$.\\
For $(Y,\xi)\in L\times S$, by \cite[Prop. 9.2]{DKV}, there exists a neighbourhood $V_{Y,\xi}$ of $Y,\xi$ in $\mathfrak{a}\times \mathfrak{a}^*$ and a continuous seminorm $v_{Y,\xi}$ on $C^\infty(K)$ such that for any $(Y',\xi')\in V_{Y,\xi}$, $g\in C^\infty(K)$, $t\geq 1$, \begin{equation}\label{eq:statphase}\left\vert \int_K e^{it\xi'(H(\exp(Y')k))}g(k)dk\right\vert \leq v_{Y,\xi}(g) t^{-n(\xi)/2}\leq v_{Y,\xi}(g) t^{-\kappa(G)}.\end{equation}

Now write $\mathfrak{a}^*\times C= ([0,1]S\times C)\cup (\R_{\geq 1}S\times C)$.\\
For any $(\xi,\eta)\in S\times C$, $0\leq t\leq 1$, $Y\in L$, then \eqref{eq:derivphient} implies \begin{equation}
    \label{eq:closeto0} \begin{aligned}
        \Vert D^s\psi_{\lambda_t}(Y)\Vert  & = \underset{\Vert X_i\Vert=1}{\sup} \vert D^s\psi_{\lambda_t}(Y)(X)\vert  \\
         & \leq  \underset{\Vert X_i\Vert=1}{\sup}\sum_{j=0}^s t^j \int_K \Vert g_j(\xi,\eta,Y,X)\Vert_\infty dk\\
         & \leq  \underset{\begin{subarray}{c}
  \Vert X_i\Vert=1\\
  Y\in L,\xi\in S,\eta\in C,k\in K
  \end{subarray}}{\sup} \sum_{j=0}^s \vert g_j(\xi,\eta,Y,X)(k) \vert\\
  & \leq  C_{L,s,C}
    \end{aligned} 
\end{equation}
where $C_{L,s,C}>0$ is independent from $Y\in L$, $\xi\in S$, $\eta\in C$, using that $g_j$ is smooth hence bounded on compact subsets.

On the other hand, write $L\times S=\underset{(Y,\xi)\in L\times S}{\bigcup} V_{Y,\xi}$. Since $L\times S$ is compact, there exists a finite subcover $U_{Y_1,\xi_1},\cdots,U_{Y_n,\xi_n}$. Then for any $(\xi,\eta)\in S\times C$, $t\geq 1$, $Y\in L$, \begin{equation}
    \label{eq:awayfrom0} \begin{aligned}
        \Vert D^s\psi_{\lambda_t}(Y)\Vert  & = \underset{\Vert X_i\Vert=1}{\sup} \vert D^s\varphi_{\lambda_t}(Y)(X)\vert  \\
         & \leq  \underset{\Vert X_i\Vert=1}{\sup}\sum_{j=0}^s t^j \left\vert\int_K e^{it\xi(H(\exp(Y)k))} g_j(\xi,\eta,Y,X)(k)dk\right\vert \\
         & \leq \sum_{j=0}^s t^j\underset{1\leq i\leq n}{\max}\underset{\begin{subarray}{c}
  \Vert X_i\Vert=1\\
  Y\in L,\xi\in S,\eta\in L
  \end{subarray}}{\sup} v_{Y_i,\xi_i}(g_j(\xi,\eta,Y,X))t^{-\kappa(G)}\\
  & \leq  D_{L,s,C}t^{s-\kappa(G)} 
    \end{aligned}
\end{equation}

where $D_{L,s,C}>0$ is independent from $Y\in L$, $\xi\in S$, $\eta\in C$. Again, the last line comes from the fact that $g_j$ is smooth hence any of its differentials is bounded on compact subsets, and the topology on $C^\infty(K)$ is given by the seminorms $p_i$, thus if $v$ is a continuous seminorms, there is $i$ such that $v(g)\leq Cp_i(g)=C\underset{k\in K}{\sup} \Vert D^ig(k)\Vert$.

Thus combining \eqref{eq:closeto0} and \eqref{eq:awayfrom0}, for any $\lambda$ such that $\Ima \lambda\in C$, $H\in L$, $s\leq r$, \begin{equation}\label{eq:finalineq_integer}\Vert D^s\psi_{\lambda}(Y)\Vert \leq \max(C_{L,s,C},D_{L,s,C})=M_{L,s}.\end{equation}

Thus the differentials of the family of spherical functions are bounded independently on $\lambda$ such that $\Ima \lambda\in C$ up to order $r$. If $r=\kappa(G)$, the proof is complete.

Otherwise, $\kappa(G)-r=\frac{1}{2}$. Then using \eqref{eq:awayfrom0} for $s=r$ and $s=r+1$, we show that for any $x,y\in L$, $\xi\in S$,$\eta\in C$, $t\geq 1$, we have on the one hand \begin{equation}\label{eq:holderstep1}\Vert D^r \psi_{\lambda_t}(x)-D^r\psi_{\lambda_t}(y)\Vert \leq \Vert D^r \psi_{\lambda_t}(x)\Vert+\Vert D^r\psi_{\lambda_t}(y)\Vert\leq 2D_{L,r,C}t^{-1/2}\end{equation}and on the other hand, since $L$ is convex, we get by the mean value theorem that \begin{equation}\label{eq:holderstep2}
    \Vert D^r \psi_{\lambda_t}(x)-D^r\psi_{\lambda_t}(y)\Vert \leq \left(\underset{Y\in L}{\sup}\Vert {D^{r+1}}\psi_{\lambda_t}(Y)\Vert\right)\Vert x-y\Vert \leq D_{L,r+1,C}t^{1/2}\Vert x-y\Vert.\end{equation}
Thus, combining \eqref{eq:holderstep1} and \eqref{eq:holderstep2} yields \begin{equation}\label{eq:holderstep3}\Vert D^r \psi_{\lambda_t}(x)-D^r\psi_{\lambda-t}(y)\Vert \leq \left(2D_{L,r,C}D_{L,r+1,C}\right)^{1/2}\Vert x-y\Vert^{1/2}.\end{equation}
Hence, setting $M_L=\max\left(\left(2D_{L,r,C}D_{L,r+1,C}\right)^{1/2},C_{L,r+1,C}(\diam L)^{1/2}\right)$, we have that for any $\lambda$ such that $\Ima \lambda \in C$, $x,y\in L$, \begin{equation}\label{eq:holderstepfin}\Vert D^r \psi_{\lambda}(x)-D^r\psi_\lambda(y)\Vert \leq M_L \Vert x-y\Vert^{\kappa(G)-r}.\qedhere\end{equation}
\end{proof}

\begin{remark}If $\Ima \lambda$ is allowed to be unbounded, then the functions are not even bounded in $C(\mathfrak{a}^+)$.
\end{remark}

We will now show that this result is optimal. The bounds used in the previous theorem are not sharp in general, but for a subfamily where $n(\xi)=\frac{\kappa(G)}{2}$, they are. Thus, we will show that this particular subfamily is already unbounded in higher regular Hölder spaces.
\begin{theorem}\label{thm:opti}We keep the notations of Theorem \ref{thm:reg}. For any $\delta'>\delta$, the family of positive definite spherical functions of $(G,K)$ is not bounded in $C^{(r,\delta')}(\mathfrak{a}^+)$.
\end{theorem}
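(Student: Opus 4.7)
The plan is to construct, for each large $t\geq 1$, a positive-definite spherical function $\varphi_{\lambda_t}$ whose $(r,\delta')$-Hölder seminorm on some fixed compact subset of $\mathfrak{a}^+$ grows at least like $t^{\delta'-\delta}\to \infty$. First, I would pick a direction $\xi_0\in \mathfrak{a}^*\setminus\{0\}$ realising $n(\xi_0)/2 = \kappa(G)$; by formula \eqref{kappamin} one may take $\xi_0$ orthogonal to all simple roots except the one minimising $\sum_{\alpha :\, n_i(\alpha)\geq 1} m(\alpha)$. Setting $\lambda_t = t\xi_0\in \mathfrak{a}^*$, the elements $\lambda_t$ are real, so Proposition \ref{prop:defpos} guarantees that each $\varphi_{\lambda_t}$ is positive-definite.

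I would then upgrade the upper bound \eqref{eq:statphase} of \cite{DKV} to a full stationary phase asymptotic expansion. Starting from \eqref{eq:derivphient} with $s=r$, the dominant $j=r$ term equals $t^r\int_K e^{it\xi_0(H(\exp(Y)k))} g_r(Y,X)(k)\,dk$, and the standard stationary phase expansion gives this integral the asymptotic form
$$t^{-\kappa(G)} \sum_{c} a_c(Y,X)\, e^{it\phi_c(Y)} \;+\; O\!\left(t^{-\kappa(G)-1/2}\right),$$
where $c$ indexes the finitely many critical manifolds of $k\mapsto \xi_0(H(\exp(Y)k))$ in $K$, and $a_c,\phi_c$ are smooth in $Y$. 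The remaining terms $j<r$ in \eqref{eq:derivphient} contribute $O(t^{r-1-\kappa(G)})$, negligible compared to the leading order $t^{r-\kappa(G)}=t^{-\delta}$.

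The heart of the argument is then to pick a regular $Y_0\in \mathfrak{a}^+$, a unit direction $X\in \mathfrak{a}$, and a displacement $h_t\in \mathfrak{a}$ of length $\sim 1/t$ so that, for a dominant critical manifold $c$ with $a_c(Y_0,X)\neq 0$, one has $t(\phi_c(Y_0+h_t)-\phi_c(Y_0))\equiv \pi \pmod{2\pi}$. The sign-flip of the leading oscillatory term then yields
$$\bigl\|D^r\psi_{\lambda_t}(Y_0) - D^r\psi_{\lambda_t}(Y_0+h_t)\bigr\| \;\gtrsim\; t^{-\delta},$$
and dividing by $\|h_t\|^{\delta'}\sim t^{-\delta'}$ forces the $(r,\delta')$-Hölder seminorm to be at least of order $t^{\delta'-\delta}\to\infty$. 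Via Corollary \ref{coro:liealgvsliegroup}, this transports to $\{\varphi_{\lambda_t}\}$ being unbounded in $C^{(r,\delta')}(G_r)$.

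The main obstacle is extracting the sharp leading asymptotics from \cite{DKV}: one needs to identify the leading amplitude and phase beyond the absolute value bound \eqref{eq:statphase}, and then verify that they are nontrivial. Nonvanishing of $a_c(Y_0,X)$ reduces to nondegeneracy of the transverse Hessian of $\xi_0\circ H(\exp(Y_0)\cdot)$ on its critical manifold (which is precisely why that critical set has codimension $n(\xi_0)$) together with a suitable choice of $X$; nonconstancy of $\phi_c(Y)$ in $Y$ follows from the envelope formula $\partial_Y\phi_c(Y) = \xi_0\circ (\Ad(k_c(Y)^{-1})(\cdot))_{\mathfrak{a}}$, which is nonzero at a regular $Y_0$ provided $k_c(Y_0)\neq e$ and $\xi_0$ is chosen generically. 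Establishing these two nondegeneracy facts is the only substantive step beyond the estimates already developed for Theorem \ref{thm:reg}.
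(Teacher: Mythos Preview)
Your overall strategy matches the paper's: pick $\xi_0$ with $n(\xi_0)=2\kappa(G)$, consider the positive-definite subfamily $\psi_{t\xi_0}$, upgrade the bound \eqref{eq:statphase} to the full asymptotic expansion of \cite[Thm.~9.1]{DKV}, and exploit the oscillation of the leading term $t^{-\delta}\sum_{w\in W/W_{\xi_0}} c_{w,Y}(g_r(Y,X))\,e^{it(w\xi_0)(Y)}$. The phases here are simply the linear functions $\phi_w(Y)=(w\xi_0)(Y)$, so your discussion of their nonconstancy via an envelope formula is unnecessary; and the paper verifies nonvanishing of the amplitudes directly by computing $g_r(x,X)(k)=e^{-\rho(H(\exp(x)k))}\prod_i \xi_0(Df_k(x)(X_i))$ and noting it is constant on each $K_x w K_{\xi_0}$, so a suitable choice of $x,X$ makes it nonzero at $k=e$.

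The genuine gap is your ``sign-flip'' step. There is not one critical manifold but $|W/W_{\xi_0}|$ of them, and a displacement $h_t$ with $\|h_t\|\sim 1/t$ moves \emph{every} phase $t(w\xi_0)(h_t)$ by an amount of order $1$. Flipping the sign of a single term in $\sum_w c_w e^{it(w\xi_0)(Y)}$ does not by itself force the whole sum to change by a definite amount: the other terms can interfere destructively. You would need a simultaneous control on all phase increments, which you do not address.

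The paper circumvents this interference by an averaging argument, isolated as Lemma~\ref{lem:expo}. Rather than picking one $t$, it averages $|S_t(x)-S_t(y)|^2$ over a block of integers $t\in\{m,\dots,m+N-1\}$ with $N\sim 1/\|x-y\|$. Expanding the square, the cross terms $e^{it((w\xi_0)(x)-(w'\xi_0)(y))}$ with $w\neq w'$ (or $x\neq y$) have geometric sums bounded by $1/|\sin(\cdot)|$, hence contribute $O(1/N)$ after averaging; what survives is $\sum_w(|c_w(x)|^2+|c_w(y)|^2)\geq C>0$. Plugging this into \eqref{eq:step2}--\eqref{eq:step7} and assuming boundedness in $C^{(r,\delta')}$ yields $\|x-y\|^{2\delta}\lesssim \|x-y\|^{2\delta'}$ uniformly as $y\to x$, a contradiction. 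This averaging device is the substantive ingredient your sketch is missing.
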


Before this, we first prove a lemma showing that complex exponentials are not bounded in Hölder spaces. We will reduce the problem for spherical functions of $(G,K)$ to such functions.
\begin{lem}\label{lem:expo}Let $E$ be a finite dimensional real vector space, $U$ an open subset of $E$ such that $0\in \overline{U}$. Let $u_1,\cdots,u_n\in E^*$ distinct and non-zero, and $f_1,\cdots,f_n:E\to \C$ continuous functions such that for any $U'$ open subset of $U$, there is $x\in U'$ such that $\sum \vert f_j(x)\vert\neq 0$. Then there exists $C>0$, $d>0$, $x\in U$, and an open set $V$ with $0\in \overline{V}$ such that for all $y=x+h,h\in V$, $m\in \N$ and $N\geq \frac{d}{\Vert h\Vert}$, $$\frac{1}{N}\sum_{t=m}^{m+N-1} \left\vert \sum_{j=1}^n  f_j(x)e^{itu_j(x)}-f_j(y)e^{itu_{j}(y)} \right\vert^2 \geq C.$$
\end{lem}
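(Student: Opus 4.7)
The strategy is to expand the modulus squared and estimate each resulting Ces\`aro mean via the elementary bound $|\frac{1}{N}\sum_{t=m}^{m+N-1} e^{it\omega}| \leq 2/(N|e^{i\omega}-1|)$ valid for $\omega\notin 2\pi\Z$. Writing $A(t)=\sum_j f_j(x)e^{itu_j(x)}$ and $B(t)=\sum_j f_j(y)e^{itu_j(y)}$, the integrand equals $|A(t)|^2+|B(t)|^2-2\re(A(t)\overline{B(t)})$. Averaging in $t$ thus produces Ces\`aro sums whose frequencies fall into four types: $0$ (diagonal of $|A|^2$ and $|B|^2$), $(u_j-u_k)(x)$ or $(u_j-u_k)(y)$ (off-diagonal of $|A|^2,|B|^2$), $u_j(x)-u_k(y)$ for $j\neq k$ (off-diagonal of $A\overline B$), and $-u_j(h)$ with $h=y-x$ (diagonal of $A\overline B$). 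The zero-frequency terms contribute $\sum_j|f_j(x)|^2+\sum_j|f_j(y)|^2$; the middle two families yield an $O(1/N)$ error once one knows their frequencies are bounded away from $2\pi\Z$; the last family is the source of a dangerous cross-term and must be tamed by taking $N$ large relative to $1/\|h\|$.

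I would first choose $x\in U$ generically: the set $\{y:\sum_j|f_j(y)|\neq 0\}$ is open by continuity of the $f_j$ and dense by hypothesis, while for each $j\neq k$ the set $\{y:(u_j-u_k)(y)\in 2\pi\Z\}$ is a countable union of parallel hyperplanes (since $u_j\neq u_k$), hence has open dense complement. A Baire-type intersection argument produces $x$ in the intersection; by continuity there is then an open ball around $x$ and $\delta_0>0$ on which $|e^{i(u_j(y)-u_k(y))}-1|\geq\delta_0$ and $|e^{i(u_j(x)-u_k(y))}-1|\geq\delta_0$ for all $j\neq k$. Next pick $v_0\in E\setminus\bigcup_j\ker u_j$, possible since the $u_j$ are finitely many nonzero linear forms, and define $V$ as a small truncated open cone $\{tv_0+w:t\in(0,\varepsilon),\,\|w\|<\rho t\}$ with $\varepsilon,\rho$ small enough that $x+V$ lies in the ball above and $|u_j(h)|\geq c\|h\|$, $|u_j(h)|\leq\pi/2$ for every $h\in V$ and all $j$. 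Then $0\in\overline V$, and $|e^{-iu_j(h)}-1|\geq (2c/\pi)\|h\|$ on $V$.

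With these choices, for $y=x+h$ with $h\in V$ and $N\geq d/\|h\|$, all off-diagonal Ces\`aro sums are bounded by $(\text{const}/\delta_0)/N\leq(\text{const})\|h\|/d$, while the diagonal cross-term $\sum_j f_j(x)\overline{f_j(y)}\cdot N^{-1}\sum_{t=m}^{m+N-1}e^{-itu_j(h)}$ has modulus at most $\frac{\pi}{cd}\sum_j|f_j(x)||f_j(y)|$, independently of $m$. Applying $2|ab|\leq|a|^2+|b|^2$ to dominate this cross-term and combining the pieces gives
\[
\frac{1}{N}\sum_{t=m}^{m+N-1}|A(t)-B(t)|^2 \geq \Bigl(1-\frac{\pi}{cd}\Bigr)\Bigl(\sum_j|f_j(x)|^2+\sum_j|f_j(y)|^2\Bigr) - O(\|h\|/d).
\]
Since $\sum_j|f_j(x)|^2>0$, taking $d$ large enough and, if necessary, shrinking $V$ so that $\|h\|/d$ is small, the right-hand side is bounded below by a positive constant $C$ independent of $m$, $N$ and $h\in V$.

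The main obstacle is the single diagonal term whose frequency $-u_j(h)$ degenerates as $h\to 0$: it forces the averaging window $N$ to grow at least like $1/\|h\|$, which is precisely the role of the hypothesis $N\geq d/\|h\|$. Arranging the direction of $V$ to stay away from $\bigcup_j\ker u_j$ ensures that $|u_j(h)|$ remains comparable to $\|h\|$, so that the Ces\`aro bound can absorb this term; all remaining frequencies behave harmlessly because they are bounded away from $2\pi\Z$ uniformly in a neighbourhood of $x$.
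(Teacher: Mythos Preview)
Your proof is correct and follows essentially the same approach as the paper: expand the square, classify the resulting frequencies into diagonal and off-diagonal types, bound all nonzero-frequency Ces\`aro means via $|N^{-1}\sum_{t=m}^{m+N-1}e^{it\omega}|\leq 2/(N|e^{i\omega}-1|)$, and choose $V$ transverse to the kernels $\ker u_j$ so that the dangerous diagonal cross-terms with frequency $u_j(h)$ are controlled once $N\gtrsim 1/\|h\|$. The only differences are cosmetic---the paper picks $x$ near $0$ (using $0\in\overline{U}$) so the off-diagonal phases land automatically in $(0,2\pi)$, while you use a genericity argument, and your truncated cone and AM--GM step are slight variants of the paper's constructions---but the skeleton is the same.
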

\begin{proof}
Up to multiplying $f_j$ by $x\mapsto e^{imu_j(x)}$, we can assume that $m=0$ as long as the constants we find depends only on $\vert f_j\vert$ and not $f_j$.

\begin{multline*}
    \left\vert \sum_{j=1}^n  f_j(x)e^{itu_j(x)}-f_j(y)e^{itu_{j}(y)} \right\vert^2 \\
    \begin{aligned} &=  \left( \sum_{j=1}^n  f_j(x)e^{itu_j(x)}-f_j(y)e^{itu_{j}(y)} \right)\left( \sum_{k=1}^n  \overline{f_k(x)}e^{-itu_k(x)}-\overline{f_k(y)}e^{-itu_{k}(y)} \right) \\
     & = \sum_{j,k=1}^n f_j(x)\overline{f_k(x)}e^{it(u_j(x)-u_k(x))}-f_j(x)\overline{f_k(y)}e^{it(u_j(x)-u_k(y))}
      \\ &\phantom{==}-f_j(y)\overline{f_k(x)}e^{it(u_j(y)-u_k(x))}+f_j(y)\overline{f_k(y)}e^{it(u_j(y)-u_k(y))}.
      \end{aligned}
\end{multline*}

Note that if $z\in \R\setminus 2\pi\Z$, $\left\vert\sum_{t=0}^{N-1} e^{itz}\right\vert \leq \frac{1}{\vert \sin(z/2)\vert}$. The set $H=\bigcup_{j\neq k} \ker(u_j-u_k)$ is a finite union of hyperplane, thus $U'=U\cap (E\setminus H)$ is open and non-empty with $0$ in its closure.

Let $x$ in $U'$ with $\Vert x\Vert \leq \underset{j,k}{\min}\frac{\pi}{\Vert u_j\Vert+\Vert u_k\Vert}$ and such that there is $j_0$ with $f_{j_0}(x)\neq 0$. Then there is a neighbourhood $V_x$ of $x$ in $U'$ and $\varepsilon>0$ such that for $y\in V_x$ and $j\neq k$, $\varepsilon\leq \left\vert \frac{u_j(y)-u_k(y)}{2}\right\vert\leq \pi-\varepsilon$ and $\varepsilon\leq \left\vert \frac{u_j(x)-u_k(y)}{2}\right\vert\leq \pi-\varepsilon$.

Then, let $V_0=V_x-x$, there is $h_0\in V_0\setminus \bigcup_{i} \ker u_i$. Let $\eta=\frac{1}{2}\underset{j}{\min}\vert u_j(h_0)\vert>0$. Let $$V=V_0\setminus \{h\in E \vert \forall 1\leq j\leq n, \vert u_j(h)\vert\leq \eta\Vert h\Vert\}.$$ Then $V$ is an open subset of $U$, containing $\R^*h_0$ thus such that $0\in \overline{V}$. For any $h\in V$, by definition we have $\Vert h\Vert < \frac{1}{\eta}u_j(h)$ for any $1\leq j\leq n$.

Hence we get for any $y=x+h,h\in V$, $N\in \N$, \begin{multline}
   \frac{1}{N}\sum_{t=0}^{N-1}\left\vert \sum_{j=1}^n  f_j(x)e^{itu_j(x)}-f_j(y)e^{itu_{j}(y)} \right\vert^2  \geq  \\\sum_{j=1}^n \left(\vert f_j(x)\vert^2+\vert f_j(y)\vert^2 - \frac{\vert f_j(x)f_j(y)\vert}{N \left\vert \sin \frac{u_j(x)-u_j(y)}{2}\right\vert}\right)\\
      - \sum_{j\neq k} \left(\frac{\vert f_j(x)f_k(x)\vert}{N \left\vert \sin \frac{u_j(x)-u_k(x)}{2}\right\vert}+ \frac{\vert f_j(y)f_k(y)\vert}{N \left\vert \sin \frac{u_j(y)-u_k(y)}{2}\right\vert}+2 \frac{\vert f_j(x)f_k(y)\vert}{N \left\vert \sin \frac{u_j(x)-u_k(y)}{2}\right\vert}\right).
\end{multline}Now, for each of the terms with $j\neq k$, the assumptions on $x,y$ ensures that the arguments in $\sin$ are bounded away from $0,\pi$. Furthermore, up to restricting $V$ to a bounded set if necessary, the functions $f_i$ are bounded. Thus, there is $N_0$ such that for $N>N_0$, we get $$ \displaystyle\frac{1}{N}\sum_{t=0}^{N-1}\left\vert \sum_{j=1}^n  f_j(x)e^{itu_j(x)}-f_j(y)e^{itu_{j}(y)} \right\vert^2 \geq \frac{\vert f_{j_0}(x)\vert^2}{2}- \sum_{j=1}^n \frac{\vert f_j(x)f_j(x+h)\vert}{N \left\vert \sin \frac{u_j(h)}{2}\right\vert}.$$Finally, for each $j$, there is $d_j$ such that for any $h\in V$, $N\geq \frac{d_j}{\vert u_j(h)\vert}$, $$\frac{\vert f_j(x)f_j(x+h)\vert}{N \left\vert \sin \frac{u_j(h)}{2}\right\vert} \leq \frac{\vert f_{j_0}(x)\vert^2}{4n}.$$Thus, for $d>(\max d_j)/\eta$, then for any $y=x+h,h\in V$ and $N>\max\left(N_0,\frac{d}{\Vert h\Vert}\right)$, \begin{equation*}\frac{1}{N}\sum_{t=0}^{N-1}\left\vert \sum_{j=1}^n  f_j(x)e^{itu_j(x)}-f_j(y)e^{itu_{j}(y)} \right\vert^2 \geq \frac{\vert f_{j_0}(x)\vert^2}{4}.\qedhere\end{equation*}
\end{proof}

\begin{proof}[Proof of Theorem \ref{thm:opti}]
Consider $\lambda\in \mathfrak{a}^*$ such that $n(\lambda)=2\kappa(G)$, and such that $\langle\alpha,\lambda\rangle \geq 0$ for any $\alpha\in \Sigma^+$. By Proposition \ref{prop:defpos}, $\varphi_{t\lambda}$ is positive definite for any $t\in \R$. As in \eqref{eq:derivphient}, for any $Y\in \mathfrak{a}$, $t\geq 1$, $X\in \mathfrak{a}^r$, \begin{equation}
    \label{eq:derivphicasopti} D^r\psi_{t\lambda}(Y)(X)=\sum_{j=0}^r t^j \int_K e^{it\lambda(H(\exp(Y)k))} g_j(Y,X)(k)dk.
\end{equation}
Let $I_j(Y,X,t)=\int_K e^{it\lambda(H(\exp(Y)k)} g_j(Y,X)(k)dk$. Let $W_\lambda$ denote the stabiliser of $\lambda$ under the action of the Weyl group $W$ and $K_Y,K_\lambda$ be the centralisers of $Y,\lambda$ in $K$. Let also $$\Sigma^+(\lambda)=\{\alpha \in \Sigma^+ \vert \langle \alpha,\lambda\rangle\neq 0\}$$ and $$\sigma_w=-\underset{\alpha\in \Sigma^+(\lambda)\neq 0}{\sum} m(\alpha)\operatorname{sgn}(\langle\alpha,\lambda\rangle)(w\alpha)(Y)).$$Let $d_0k$ denote the Riemannian measure on $K$ induced by the bi-invariant metric defined by the Killing form on $\mathfrak{k}$. Let $\operatorname{Vol}(K)=\int_K d_0k$. We also denote by $d_0k$ the induced Riemannian measure on the submanifold $K_awK_\lambda$ - the measure coming from the restriction of the Riemannian metric of $K$ to a Riemannian metric on the submanifold. For $w\in W$, $g\in C^\infty(K)$, $Y\in \mathfrak{a}^+$, set \begin{equation}
    \label{eq:distrib} c_{w,a}(g)=e^{i\frac{\pi}{4}\sigma_w} \prod_{\alpha\in \Sigma^{+}(\lambda)}\left\vert\frac{\langle \alpha,\lambda \rangle}{4\pi}\left(1-e^{-2(w\alpha)(Y)}\right)^{-\frac{m(\alpha)}{2}}\right\vert \frac{1}{\operatorname{Vol}(K)}\int_{K_awK_{\lambda}}g(k)d_0k.
\end{equation}
Then, by \cite[Thm. 9.1]{DKV}, for any $Y\in \mathfrak{a}^+$, there is a neighbourhood $U_Y$ of $Y$ in $\mathfrak{a}^+$ and $D(Y)>0$ such that for any $0\leq j\leq r$ $t\geq 1$, $Y'\in U_Y$ and $X$ with $\Vert X_i\Vert=1$ for all $i$, \begin{equation}
    \label{eq:phasestatopti}\left\vert I_j(Y',X,t) - \sum_{W/W_\lambda} e^{it(w\lambda)(Y')}t^{-\kappa(G)}c_{w,Y'}(g_j(Y',X))\right\vert \leq D(Y)t^{-{\kappa(G)}-1}.
\end{equation}We use that $g_j$ is smooth in all variables hence bounded on compacts and that the bound is uniform in the parameter $Y'$ of the phase function.

In particular, combining \eqref{eq:derivphicasopti} with \eqref{eq:phasestatopti} for $0\leq j< r$, for any $Y$ there is a neighbourhood $V_Y$ of $Y$ and a constant $C(Y)$ such that for any $t\geq 1$, $Y'\in V_Y$ and $X$ with $\Vert X_i\Vert=1$, \begin{equation}\label{eq:step1}
    \left\vert D^r\psi_{t\lambda}(Y')(X)-t^r I_r(Y',X,t) \right\vert \leq C(Y)t^{-1}.
\end{equation}

For $X$ fixed with $\Vert X_i\Vert=1$, let $S_t(x)=\sum_{W/W_\lambda} e^{it(w\lambda)(x)}c_{w,x}(g_r(x,X))$. Combining \eqref{eq:step1} and \eqref{eq:phasestatopti}, if $t\geq 1$, and $x,y\in U_Y\cap V_Y$, \begin{equation}\label{eq:step2}\begin{aligned}
    t^{-\delta}\vert S_t(x)-S_t(y)\vert & \leq  t^r\vert t^{-\kappa(G)}S_t(x)-I_r(x,X,t)\vert + t^r\vert I_r(x,X,t)-I_r(y,X,t)\vert \\ 
    &\phantom{\leq}+ t^r\vert I_r(y,X,t)-t^{-\kappa(G)}S_t(y)\vert  \\
     & \leq  2D(Y)t^{-\delta-1}+\vert t^rI_r(x,H,t)-D^r\psi_{t\lambda}(x)(X) \vert \\
     &\phantom{\leq} + \vert D^r\psi_{t\lambda}(x)(X)-D^r\psi_{t\lambda}(y)(X) \vert\\
     &\phantom{\leq}+\vert D^r\psi_{t\lambda}(y)(X)-t^rI_r(y,X,t) \vert\\
     &\leq  2D(Y)t^{-\delta-1}+2C(Y)t^{-1} + \vert D^r\psi_{t\lambda}(x)(X)-D^r\psi_{t\lambda}(y)(X) \vert\\
     & \leq  \Vert D^r\psi_{t\lambda}(x)-D^r\psi_{t\lambda}(y) \Vert + 2(C(Y)+D(Y))t^{-1}.
\end{aligned}\end{equation}Now the functions $c_{w,x}(g_r(x,X))$ are all zero at $x$ if and only if $g_r(x,X)=0$ almost everywhere on $\bigcup K_awK_\lambda$. Let $f_k(x)=H(\exp(x)k)$, then $$g_r(x,X)(k)=e^{-\rho(H(\exp(x)k))}\prod_{i=1}^r \lambda(Df_k(x)(X_i)).$$ If $P_\mathfrak{a}$ denote the orthogonal projection onto $\mathfrak{a}$, by \cite[Section 5]{DKV} we have $$Df_k(x)(X_i)=P_\mathfrak{a}(\Ad(t(\exp(x)k)^{-1})(X_i))$$where $t(g)=a(g)n(g)$ in the Iwasawa decomposition (see \eqref{eq:iwasawa}). The function $k\mapsto f_k(x)$ is left $K_x$-invariant, and by \cite[Prop. 5.6]{DKV}, $k\mapsto \lambda(f_k(x))$ is right $K_\lambda$-invariant as $H_\lambda\in \overline{\mathfrak{a}^+}$. Thus, $g_r(x,X)$ is constant on $K_xwK_\lambda$ for any $w$. Thus given any open subset of $\mathfrak{a}^+$, we can choose $x,X$ such that $g_r(x,X)(e)\neq 0$. Thus the hypotheses of Lemma \ref{lem:expo} hold for the family of functions $f_w:x\mapsto c_{w,x}(g_r(x,X))$, for $U=\mathfrak{a}^+$. Let $C,d,x,V$ be given by Lemma \ref{lem:expo}, $W_x=x+V$ such that for any $y\in W_x$, $m\in \N$, $N\geq \frac{d}{\Vert x-y\Vert}$, \begin{equation}
    \label{eq:step3} \sum_{t=m}^{m+N-1} \vert S_t(x)-S_t(y)\vert^2 \geq CN.
\end{equation} From now on, we choose $Y=x$ given above. Let $M=4(C(x)+D(x))^2$, we get from \eqref{eq:step2} that for any $t\geq 1$, $y\in U_x\cap V_x\cap W_x$, \begin{equation}
    \label{eq:step4}\frac{t^{-2\delta}}{2}\vert S_t(x)-S_t(y)\vert^2 \leq \Vert D^r\psi_{t\lambda}(x)-D^r\psi_{t\lambda}(y) \Vert^2+ Mt^{-2}
\end{equation}

Assume now that the family of positive definite spherical functions of $(G,K)$ is bounded in $C^{(r,\delta')}(\mathfrak{a}^+)$ for $\delta'> \delta$. In particular, up to reducing $U_x\cap V_x\cap W_x$ to a bounded subset of diameter $L$ if necessary, there is $D>0$ such that for any $y\in U_x\cap V_x\cap W_x$ and $t\geq 1$, \begin{equation}
    \label{eq:hypo} \Vert D^r\psi_{t\lambda}(x)-D^r\psi_{t\lambda}(y) \Vert \leq D\Vert x-y\Vert^{\delta'}
\end{equation}

For $y$ fixed, set $m,N$ such that \begin{equation}\label{eq:choicem}\frac{1}{\Vert x-y\Vert^{\delta'}}\leq m\leq \frac{1}{\Vert x-y\Vert^{\delta'}}+1\end{equation}and \begin{equation}\label{eq:choiceN}\frac{d}{\Vert x-y\Vert}\leq N\leq \frac{d}{\Vert x-y\Vert}+1.\end{equation}Combining \eqref{eq:step3}, \eqref{eq:step4} and \eqref{eq:hypo} gives \begin{equation}
    \label{eq:step5} \begin{aligned}\frac{CN}{2(m+N)^{2\delta}}&\leq\sum_{t=m}^{m+N-1} \frac{t^{-2\delta}}{2} \vert S_t(x)-S_t(y)\vert^2 \\&\leq \sum_{t=m}^{m+N-1} \left(\Vert D^r\psi_{t\lambda}(x)-D^r\psi_{t\lambda}(y) \Vert^2+ Mt^{-2}\right)\\ &\leq ND^2\Vert x-y\Vert ^{2\delta'}+\frac{MN}{m^2}\end{aligned}
\end{equation}
thus \begin{equation}
    \label{eq:step6} \frac{C}{2(m+N)^{2\delta}} \leq D^2\Vert x-y\Vert ^{2\delta'}+\frac{M}{m^2} \leq (D^2+M)\Vert x-y\Vert^{2\delta'}
\end{equation}by \eqref{eq:choicem}. But by \eqref{eq:choicem} and \eqref{eq:choiceN}, we have \begin{multline}
    m+N\leq \frac{d}{\Vert x-y\Vert}+1+\frac{1}{\Vert x-y\Vert^{\delta'}}+1 \leq \frac{1}{\Vert x-y\Vert}\left(d+2\Vert x-y\Vert +\Vert x-y\Vert^{1-\delta'}\right)\\ \leq \frac{1}{\Vert x-y\Vert}\left(d+2L+L^{1-\delta'}\right)
\end{multline}hence \eqref{eq:step6} becomes \begin{equation}
    \label{eq:step7} \frac{C}{2(d+2L+L^{1-\delta'})^{2\delta}}\Vert x-y\Vert^{2\delta}\leq (D^2+M)\Vert x-y\Vert^{2\delta'}.
\end{equation}
Since \eqref{eq:step7} holds for any $y\in U_x\cap V_x\cap W_x$ with the constant involved independent from $y$ and $\delta'>\delta$, we get a contradiction as $y$ goes to $x$ (which is possible because $0\in \overline{V}$ hence $x\in \overline{U_x\cap V_x\cap W_x}$).
\end{proof}

\begin{coro}\label{coro:regoptKfini}Let $G$ be a connected semisimple Lie group with finite center and $K$ a maximal compact subgroup. Let $r=\lfloor \kappa(G)\rfloor$, $\delta=\kappa(G)-r$. Then any $K$-finite matrix coefficient of a unitary representation of $G$ is in $C^{(r,\delta)}(G_r)$. Furthermore, for any $\delta'>\delta$, there exists a $K$-bi-invariant matrix coefficient of $G$ that is not in $C^{(r,\delta')}(G_r)$.
\end{coro}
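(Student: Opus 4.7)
The plan is to assemble the corollary directly from the four main technical ingredients already established in the paper: Lemma \ref{lem:lienspheriquekbiinv} (the transfer between boundedness of positive-definite spherical functions and regularity of arbitrary $K$-bi-invariant matrix coefficients), Corollary \ref{coro:liealgvsliegroup} (the passage between $G_r$ and $\mathfrak{a}^+$ via the $KAK$ map), Theorem \ref{thm:reg} (the upper bound in $C^{(r,\delta)}(\mathfrak{a}^+)$), and Theorem \ref{thm:opti} (the matching lower bound). Once these are in hand, Theorem \ref{thm:kfinite} upgrades the $K$-bi-invariant statement to a $K$-finite one, giving both halves of the corollary. No new analytic content is required; the only care is in chaining the equivalences in the right direction.

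For the positive statement, I would first apply Theorem \ref{thm:reg} to the bounded set $C=\{0\}\subset \mathfrak{a}^*$, which yields that the family of positive-definite spherical functions $(\psi_\lambda)_{\lambda\in\mathfrak{a}^*}$ of $(G,K)$ (positive-definiteness of $\varphi_\lambda$ for real $\lambda$ is the third bullet of Proposition \ref{prop:defpos}) is bounded in $C^{(r,\delta)}(\mathfrak{a}^+)$. The second statement of Corollary \ref{coro:liealgvsliegroup} then turns this into boundedness of the family $(\varphi_\lambda)$ in $C^{(r,\delta)}(G_r)$. Feeding this into Lemma \ref{lem:lienspheriquekbiinv} with $U=G_r$ shows that every $K$-bi-invariant matrix coefficient of a unitary representation of $G$ lies in $C^{(r,\delta)}(G_r)$. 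Finally, Theorem \ref{thm:kfinite} promotes this to the $K$-finite case, giving the first assertion of the corollary.

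For the optimality statement, I would argue by contraposition. Fix $\delta'>\delta$ and suppose every $K$-bi-invariant matrix coefficient of a unitary representation of $G$ were in $C^{(r,\delta')}(G_r)$. Then Lemma \ref{lem:lienspheriquekbiinv}, applied with $U=G_r$ and the regularity pair $(r,\delta')$, would force the family $(\varphi_\lambda)$ of positive-definite spherical functions to be bounded in $C^{(r,\delta')}(G_r)$. Pulling back through the smooth submersion $P$ of Proposition \ref{prop:KAKv2}, i.e.\ applying the second statement of Corollary \ref{coro:liealgvsliegroup} in reverse, the family $(\psi_\lambda)$ would be bounded in $C^{(r,\delta')}(\mathfrak{a}^+)$. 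But this directly contradicts Theorem \ref{thm:opti}, which exhibits the unboundedness in $C^{(r,\delta')}(\mathfrak{a}^+)$. Hence some positive-definite (and in particular $K$-bi-invariant) matrix coefficient fails to belong to $C^{(r,\delta')}(G_r)$.

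The only subtle point, and the one I would double-check carefully when writing this up, is that the equivalences in Lemma \ref{lem:lienspheriquekbiinv} and Corollary \ref{coro:liealgvsliegroup} are invoked on the same open set $G_r$ (resp.\ $\mathfrak{a}^+$), with the same regularity exponent $(r,\delta')$, so there is no loss between the two directions of the argument. Everything else is routine bookkeeping.
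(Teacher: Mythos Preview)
Your approach is essentially the paper's own (the proof there is a one-line citation of Lemma \ref{lem:lienspheriquekbiinv}, Corollary \ref{coro:liealgvsliegroup}, and Theorems \ref{thm:kfinite}, \ref{thm:reg}, \ref{thm:opti}), and the chaining you describe is exactly right.

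There is one small slip in the positive direction: taking $C=\{0\}$ in Theorem \ref{thm:reg} only bounds the spherical functions $\psi_\lambda$ with \emph{real} $\lambda$, whereas Lemma \ref{lem:lienspheriquekbiinv} requires boundedness of \emph{all} positive-definite spherical functions, and these may have $\Ima\lambda\neq 0$ (e.g.\ complementary series). The third bullet of Proposition \ref{prop:defpos} gives only one implication. The fix is immediate: by the first two bullets of Proposition \ref{prop:defpos}, every positive-definite $\varphi_\lambda$ is bounded, hence $\Ima\lambda\in\operatorname{Conv}(W\rho)$, so apply Theorem \ref{thm:reg} with the bounded set $C=\operatorname{Conv}(W\rho)$, as noted in the remark immediately following that theorem.
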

\begin{proof}
It follows from Lemma \ref{lem:lienspheriquekbiinv}, Corollary \ref{coro:liealgvsliegroup} and Theorems \ref{thm:kfinite}, \ref{thm:reg} and \ref{thm:opti}.
\end{proof}

\begin{remark}\label{rmk:singular}
For any open subset $U$ strictly larger than $G_r$, there are $K$-bi-invariant matrix coefficients that are only continuous. Indeed, for any $a=\exp(Y)\in U\setminus G_r$, there exists $\lambda\in \mathfrak{a}^*$ nonzero, $w\in W$ such that $\Sigma_w(\lambda,a)=\{\alpha\in \Sigma^+\vert \langle\alpha,\lambda\rangle\alpha(Y)\neq 0\}$ is empty.
Set $n_w=\sum_{\alpha\in \Sigma_w(\lambda,a)} m(\alpha)$ and $g_{Y'}:k\mapsto e^{-\rho(H(\exp(Y')k))}$, then by \cite[Thm. 9.1]{DKV}, $$\left\vert \varphi_{t\lambda}(\exp Y')-\sum_{w\in W/W_\lambda}e^{itw\lambda(Y')}t^{-n_w/2}c_{w,Y'}(g_{Y'})\right\vert\leq Ct^{-1}$$using the same notations as in Theorem \ref{thm:opti}. Since $n_w=0$ for some $w$, the same proof as in Theorem \ref{thm:opti} gives that the positive definite spherical functions are not bounded in any Hölder spaces.
\end{remark}

\section{Compact semisimple groups}\label{sec:cpt}
\subsection{An upper bound on regularity}\label{sec:cptupper}
We first recall some notations of Section \ref{sec:sphericalfunctions} and introduce new ones (more details in \cite{clerc, helgason1979differential}). If $\mathfrak{g}$ is a semisimple real Lie algebra, we introduced a decomposition $\mathfrak{g}=\mathfrak{k}\oplus \mathfrak{p}$ into eigenspaces of a Cartan involution $\theta$. Let $G_\C$ be the simply connected Lie group whose Lie algebra is the complexification $\mathfrak{g}_\C$ of $\mathfrak{g}$. Let $G,K$ be the analytic subgroups of $G_\C$ corresponding to the subalgebras $\mathfrak{g},\mathfrak{k}$. Consider $\mathfrak{u}=\mathfrak{k}\oplus i\mathfrak{p}$ and $U$ the corresponding analytic subgroup of $G_\C$. Then $U$ is a maximal compact subgroup of $G_\C$ and is simply connected. Consider also $K_\C$ the analytic subgroup corresponding to $\mathfrak{k}_\C$. Finally, recall that $\mathfrak{a}$ is a maximal abelian subspace of $\mathfrak{p}$ and $\mathfrak{n}=\bigoplus_{\alpha\in \Sigma^+} \mathfrak{g}^\alpha$. Let $A,A_\C,N,N_\C$ be the analytic subgroups of $G_\C$ corresponding to $\mathfrak{a},\mathfrak{a}_\C,\mathfrak{n},\mathfrak{n}_\C$.

The involution $\theta$ extends to $\mathfrak{g}_\C$ and thus induces an involution of $G_\C$ , also denoted $\theta$. The subgroup $K$ is the subgroup of fixed points of $\theta$ in $U$, so $(U,K)$ is a symmetric Gelfand pair and the symmetric space $M=U/K$ is the compact dual of $G/K$. Since the Killing form of $\mathfrak{g}_\C$ restricted to $\mathfrak{g}\times \mathfrak{g}$ coincides with the Killing form of $\mathfrak{g}$, we continue to denote $\langle \cdot,\cdot\rangle$ both on $\mathfrak{g}$ and its complexification. It must be noted that $\langle \cdot,\cdot\rangle$ is $\C$-bilinear on $\mathfrak{g}_\C$ and not sesquilinear, and thus not a scalar product.

Such pairs $(U,K)$ were studied in \cite{dumas2023regularity}. The optimal regularity of $K$-finite coefficients of $U$ was found in some specific cases and a conjecture was given in the general case. In what follows, we will extend the results using methods similar to what we did above in the non-compact setting.

Let $Q$ be the connected component of $\mathfrak{a}_r=\{H\in \mathfrak{a} \vert \forall \alpha\in \Sigma, \alpha(H)\not\in \pi \Z\}$ contained in $\mathfrak{a}^+$ and whose closure contains $0$. Then, there is a $KAK$ decomposition in the group $U$ (\cite[Prop. 5.8]{dumas2023regularity}).

\begin{prop}
\label{prop:kakcpt}For any $u\in U$, there exists a decomposition $$u=k_1(u)\exp (iP(u))k_2(u)^{-1}$$where $k_1(u),k_2(u)\in K$ and $P(u)\in Q$. The map $u\mapsto P(u)$ is smooth on the set $U_r=K\exp(iQ)K$. Furthermore, for each $u\in U_r$, there exists a neighbourhood $V_u$ of $u$ in $U_r$ and a choice of $u\mapsto k_i(u)$ such that $k_i$ is smooth on $V_u$, $i=1,2$.
\end{prop}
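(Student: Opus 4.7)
The plan is to follow the same strategy as in Lemma~\ref{lem:kaksubmersion} and Proposition~\ref{prop:KAKv2}, substituting $\ad(iH)$ for $\ad(H)$ throughout. The existence of a decomposition with $P(u)\in\overline{Q}$ (for arbitrary $u\in U$) is supplied by \cite[Prop.~5.8]{dumas2023regularity}; what remains is to upgrade this to smooth dependence of $P$ on $u$ over the regular set $U_r$, together with a smooth local choice of $k_1,k_2$.

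First I would establish that
\[q:K\times K\times Q\to U_r,\quad (k_1,k_2,H)\mapsto k_1\exp(iH)k_2^{-1},\]
is a submersion. Repeating the translations at the start of Lemma~\ref{lem:kaksubmersion}, surjectivity of $T_{(k_1,k_2,H)}q$ reduces to surjectivity of the real-linear map
\[u:\mathfrak{k}\times\mathfrak{k}\times\mathfrak{a}\to\mathfrak{u},\quad u(X_1,X_2,Y)=\Ad(\exp(-iH))(X_1)-X_2+iY.\]
Using the decomposition $\mathfrak{u}=\mathfrak{m}\oplus\bigoplus_{\alpha\in\Sigma^+}\mathfrak{k}^\alpha\oplus i\mathfrak{a}\oplus\bigoplus_{\alpha\in\Sigma^+}i\mathfrak{p}^\alpha$ and the basis $Z_{\alpha,i}^\pm$ from the proof of Lemma~\ref{lem:kaksubmersion}, the key computation is that $\ad(iH)$ now produces trigonometric rather than hyperbolic functions, yielding
\[u(Z_{\alpha,i}^+,0,0)=\cos(\alpha(H))Z_{\alpha,i}^+-\sin(\alpha(H))(iZ_{\alpha,i}^-),\quad u(0,Z_{\alpha,i}^+,0)=-Z_{\alpha,i}^+,\]
together with the easy identities $u(0,0,Y)=iY$ on $i\mathfrak{a}$ and $u(Y_j,0,0)=Y_j$, $u(0,Y_j,0)=-Y_j$ on $\mathfrak{m}$. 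Each $\alpha$-block $\mathfrak{k}^\alpha\oplus i\mathfrak{p}^\alpha$ is thus reached precisely when $\sin(\alpha(H))\neq 0$, which holds for every $H\in Q$ by the defining condition $\alpha(H)\notin\pi\Z$.

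Next I would pass to the quotient by $\Delta(M)$ exactly as in Proposition~\ref{prop:KAKv2}. The induced map $\tilde q:(K\times K)/\Delta(M)\times Q\to U_r$ is smooth and bijective, and $\dim\mathfrak{k}^\alpha=\dim\mathfrak{p}^\alpha=m(\alpha)$ makes the source and target equidimensional. Since $q=\tilde q\circ(p,\Id)$ with $p$ a submersion, $\tilde q$ is itself a submersion, hence a local diffeomorphism between manifolds of equal dimension, and being bijective it is a global diffeomorphism. Its inverse gives the smoothness of $P$, and smooth local sections of the submersion $K\times K\to(K\times K)/\Delta(M)$ yield the smooth local choices $k_1,k_2$ exactly as in the final paragraph of Proposition~\ref{prop:KAKv2}.

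The main conceptual difference with the noncompact case, and the reason smoothness cannot be asserted on all of $U$, is the replacement of $\sinh$ by $\sin$ in the tangent computation: in the noncompact setting $\sinh(\alpha(H))\neq 0$ holds throughout $\mathfrak{a}^+$, while in the compact setting one must also excise the hyperplanes $\{\alpha(H)\in\pi\Z\}$. This is exactly the defining condition of $\mathfrak{a}_r$, and explains why $Q$ (rather than $\mathfrak{a}^+$) is the correct parameter space and $U_r$ the correct open set on which $P$ is smooth.
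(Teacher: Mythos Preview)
Your proof is correct and is exactly the natural compact analogue of Lemma~\ref{lem:kaksubmersion} and Proposition~\ref{prop:KAKv2}. Note that the paper does not actually prove this proposition in-line: it is quoted from \cite[Prop.~5.8]{dumas2023regularity}, so there is no independent argument here to compare against. Your adaptation---replacing $\ad(H)$ by $\ad(iH)$ so that $e^{-\ad(iH)}$ acts on each block $\mathfrak{k}^\alpha\oplus i\mathfrak{p}^\alpha$ by a rotation, with surjectivity governed by $\sin(\alpha(H))\neq 0$ rather than $\sinh(\alpha(H))\neq 0$---is precisely the expected modification, and your closing remark correctly identifies why the regular set must now exclude the walls $\alpha(H)\in\pi\Z$. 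The one point you pass over quickly is the bijectivity of $\tilde q$, which requires the uniqueness clause of the compact $KAK$ decomposition (unique $P(u)\in\overline{Q}$, and $(k_1,k_2)$ unique modulo $\Delta(M)$ when $P(u)\in Q$); this is part of the cited result, so the appeal is legitimate.
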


The set $U_r$ is a dense open subset of $U$ and we call it the set of regular points. This set will play the same role as $G_r$ in the non-compact case.

As in the previous section for the non-compact setting, we want to study the spherical functions of the pair $(U,K)$. Let $\widehat{U}_K$ denote the set of classes of irreducible finite dimensional representations of $U$ with a non-zero $K$-invariant vector. Then $\widehat{U}_K$ (and thus spherical functions of $(U,K)$) are parameterized by a subset of $\mathfrak{a}_\C^*$. Note that since $U$ is compact, any spherical function is positive-definite (\cite[Thm. 6.5.1]{Dijk+2009}).
\begin{theorem}[Cartan-Helgason]
\label{cartanhelgason}Let $\Lambda=\{\mu\in \mathfrak{a}^* \vert \forall \alpha\in \Sigma_\mathfrak{a}^+,\frac{\langle\mu,\alpha\rangle}{\langle\alpha,\alpha\rangle}\in \N\}$. Then the map which sends a representation to its highest weight is a bijection from $\widehat{U}_K$ onto $\Lambda$.
\end{theorem}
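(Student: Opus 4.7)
The plan is to prove this via the classical Cartan--Helgason argument, combining the highest weight theorem for the compact simply connected group $U$ with the constraint that the existence of a $K$-invariant vector puts on the highest weight. The three main steps are: choose a Cartan subalgebra of $\mathfrak{u}$ compatible with the symmetric pair, extract from the hypothesis $V^K \neq 0$ the two conditions defining $\Lambda$, and finally build, for each $\mu \in \Lambda$, an irreducible representation with a non-zero $K$-fixed vector.

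First, I would extend $i\mathfrak{a}$ to a $\theta$-stable maximal torus of $\mathfrak{u}$, writing $\mathfrak{t} = \mathfrak{t}_\mathfrak{k} \oplus i\mathfrak{a}$ with $\mathfrak{t}_\mathfrak{k} = \mathfrak{t}\cap\mathfrak{k}$, and complexify to obtain a Cartan subalgebra of $\mathfrak{g}_\C$. Fix an ordering on the roots of $(\mathfrak{g}_\C,\mathfrak{t}_\C)$ extending the one on $\Sigma$ induced by $\mathfrak{a}^+$. The highest weight theorem for $U$ gives a bijection between irreducible unitary representations of $U$ and dominant integral weights in $i\mathfrak{t}^*$, so the injectivity in our statement is automatic; what remains is to identify the image of $\widehat{U}_K$ under the highest weight map $\pi \mapsto \mu_\pi$.

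Next I would show that $\pi \in \widehat{U}_K$ forces $\mu_\pi$ to vanish on $\mathfrak{t}_\mathfrak{k}$ (so that $\mu_\pi$ descends to a linear form on $i\mathfrak{a}$, identified with $\mathfrak{a}^*$) and to satisfy the integrality condition defining $\Lambda$. The vanishing on $\mathfrak{t}_\mathfrak{k}$ follows by decomposing a non-zero $v \in V^K$ over weight spaces and using that $v$ is fixed by the subtorus $\exp(\mathfrak{t}_\mathfrak{k}) \subset K$, which forces every weight of $V$ occurring in the decomposition (in particular $\mu_\pi$) to vanish on $\mathfrak{t}_\mathfrak{k}$. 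For the integrality, for each restricted root $\alpha \in \Sigma^+$ I would assemble an $\mathfrak{sl}_2(\C)$-triple from a root vector $X \in \mathfrak{g}^\alpha$ and $\theta(X)$, study how this subalgebra acts on a $K$-fixed vector, and extract the condition $\langle \mu_\pi,\alpha\rangle/\langle \alpha,\alpha\rangle \in \N$ from the classification of finite-dimensional representations of $\mathfrak{sl}_2(\C)$. The doubled-root case $2\alpha \in \Sigma$ is absorbed automatically, since applying the condition to $2\alpha$ yields $\langle \mu_\pi,\alpha\rangle/\langle \alpha,\alpha\rangle \in 2\N$. This integrality step, which produces the unusual factor (no $2$ in the numerator, contrary to the usual integrality condition for $U$), is the main technical obstacle.

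Finally, for surjectivity, given $\mu \in \Lambda$ I would realise the irreducible $U$-representation $V_\mu$ of highest weight $\mu$ via Borel--Weil, as holomorphic sections of the line bundle $L_\mu \to G_\C/B$ for a Borel subgroup $B$ compatible with the chosen ordering. To produce a non-zero $K$-fixed vector, I would average the highest weight section over $K$ against Haar measure; the Iwasawa decomposition $G_\C = K A_\C N$ and the integrality of $\mu$ ensure that the averaged section is non-zero on the dense open orbit, yielding an element of $V_\mu^K$ and completing the bijection $\widehat{U}_K \leftrightarrow \Lambda$.
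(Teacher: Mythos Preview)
The paper does not prove this statement. The Cartan--Helgason theorem is quoted as a classical result (with its standard attribution in the theorem name) and used without proof; the surrounding text only records the consequence that $\Lambda$ is freely generated over $\N$ by the fundamental weights $\mu_1,\dots,\mu_\ell$, with a reference to \cite{vretare_orth}. There is therefore no ``paper's own proof'' to compare your proposal against.

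Your outline follows the standard route found in Helgason's books, and the overall architecture is correct. Two places are imprecise enough to count as gaps if this were submitted as a self-contained proof. First, in the step showing $\mu_\pi\vert_{\mathfrak{t}_\mathfrak{k}}=0$: decomposing a $K$-fixed vector $v$ into weight spaces and invoking $\exp(\mathfrak{t}_\mathfrak{k})$-invariance only tells you that the weights \emph{appearing in $v$} vanish on $\mathfrak{t}_\mathfrak{k}$; you have not explained why the highest weight $\mu_\pi$ is among them. The usual argument instead shows that the highest weight vector is fixed by $M=Z_K(\mathfrak{a})$ (for instance by proving $\langle v, v_\mu\rangle\neq 0$ via the Iwasawa decomposition, so that the $M$-equivariant projection of $v$ to the highest weight line is non-zero), and then uses $\exp(\mathfrak{t}_\mathfrak{k})\subset M$. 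Second, in the surjectivity step, the assertion that averaging the highest weight section over $K$ yields something non-zero is exactly the heart of the matter and deserves more than ``the Iwasawa decomposition and the integrality of $\mu$ ensure\dots''; one typically computes the resulting integral explicitly (it becomes a Harish-Chandra $c$-function type integral) or argues via the structure of the spherical principal series. As written, both steps would need to be fleshed out to stand on their own.
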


There exists $\mu_1,\cdots,\mu_\ell\in \mathfrak{a}^*$ such that$$\frac{\langle\mu_i,\alpha_j\rangle}{\langle\alpha_j,\alpha_j\rangle}=\left\lbrace \begin{aligned}0 & \textrm{ if }i\neq j\\ 1& \textrm{ if }i=j,2\alpha_j\not\in \Sigma_\mathfrak{a}^+\\ 2& \textrm{ if }i=j,2\alpha_j\in \Sigma_\mathfrak{a}^+\end{aligned}\right.$$The elements $(\mu_i)$ are called fundamental weights and $\Lambda=\left\{\sum m_i\mu_i, m_i\in \N\right\}$ (see \cite{vretare_orth}).

Let $\mu\in\Lambda$ and $\pi_\mu$ an irreducible finite-dimensional representation of $U$ with highest weight $\mu$. Let $e_K$ be a unit $K$-invariant vector. Then $\psi_\mu:u\mapsto \langle \pi_\mu(u) e_K,e_K\rangle$ is a spherical function of $(U,K)$. Since $\pi_\mu$ is a finite-dimensional representation, it is smooth and its differential induces a representation of $\mathfrak{u}$, which extends to $\mathfrak{u}_\C=\mathfrak{g}_\C$ and is itself the differential of a representation of the simply connected group $G_\C$. Thus, $\pi_\mu$ extends to a representation of $G_\C$, so $\psi_\mu$ is defined on all of $G_\C$. Given the notations of Section \ref{sec:sphericalfunctions}, $\psi_\mu\vert_G=\varphi_{-i(\mu+\rho)}$.

Thus on $G$, we know that $\psi_\mu$ has an integral representation. But since the Iwasawa decomposition does not extend to $G_\C$, the integral does not have a meaning outside of $G$. However, even if $K_\C \times A_\C \times N_\C\to G_\C$ is not a diffeomorphism, it is a diffeomorphism in a neighbourhood of the identity $e\in G_\C$ so we can still work there. The following lemma is \cite[Lemme 1]{Clers1976}.

\begin{lem}\label{lem:extan}
    There exists a neighbourhood $V$ of $e$ in $G_\C$ which is invariant by conjugation by $K$ and analytic maps $\kappa:V\to K_\C$, $n:V\to N_\C$ and $H:V\to \mathfrak{a}_\C$ such that\begin{enumerate}
        \item $H(e)=0$,
        \item $\forall g\in V$, $g= \kappa(g)\exp H(g)n(g)$.
    \end{enumerate}
\end{lem}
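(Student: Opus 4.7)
The plan is to obtain the decomposition by applying the analytic inverse function theorem to the multiplication map $m:K_\C\times A_\C\times N_\C\to G_\C$, $(k,a,n)\mapsto kan$, and then to shrink the resulting neighbourhood so as to make it invariant under $K$-conjugation.

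First, complexifying the real Iwasawa direct sum $\mathfrak{g}=\mathfrak{k}\oplus\mathfrak{a}\oplus\mathfrak{n}$ gives $\mathfrak{g}_\C=\mathfrak{k}_\C\oplus\mathfrak{a}_\C\oplus\mathfrak{n}_\C$ as a direct sum of complex subspaces. The map $m$ is analytic, and its differential at $(e,e,e)$ is identified with the sum map $(X,Y,Z)\mapsto X+Y+Z$, which is a $\C$-linear isomorphism. The analytic inverse function theorem thus furnishes an open neighbourhood $V_0$ of $e$ in $G_\C$ and analytic maps $\kappa_0:V_0\to K_\C$, $a_0:V_0\to A_\C$, $n_0:V_0\to N_\C$ with $g=\kappa_0(g)a_0(g)n_0(g)$ on $V_0$ and $\kappa_0(e)=a_0(e)=n_0(e)=e$. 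Since $\mathfrak{a}_\C$ is abelian, $\exp:\mathfrak{a}_\C\to A_\C$ is an analytic local diffeomorphism at $0$; shrinking $V_0$ if needed so that $a_0(V_0)$ lies in the range of this diffeomorphism, define $H_0=\exp^{-1}\circ a_0:V_0\to\mathfrak{a}_\C$. Then $H_0$ is analytic, $H_0(e)=0$, and $g=\kappa_0(g)\exp H_0(g)\,n_0(g)$ on $V_0$.

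Second, to impose $K$-conjugation invariance, set $V=\{g\in G_\C : kgk^{-1}\in V_0 \text{ for all }k\in K\}$. By construction $V$ is $K$-conjugation invariant and contains $e$. To verify openness, fix $g\in V$: the orbit $\{kgk^{-1}:k\in K\}$ is compact and contained in the open set $V_0$, so a standard tube-lemma argument (covering $K$ by finitely many open sets $W_i$ around points $k_i\in K$ and choosing neighbourhoods $U_i$ of $g$ with $W_i U_i W_i^{-1}\subset V_0$) produces an open neighbourhood $U=\bigcap U_i$ of $g$ with $KUK^{-1}\subset V_0$, hence $U\subset V$. Restricting $\kappa_0, n_0, H_0$ to $V$ yields the required analytic maps $\kappa, n, H$ with the two stated properties.

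The main obstacle is largely a bookkeeping issue: producing a $K$-conjugation invariant open neighbourhood compatible with the decomposition. Once the complexified Iwasawa direct sum is in place, the existence of the local factorization is an immediate application of the analytic inverse function theorem, and the extraction of the logarithm in the abelian factor $A_\C$ is a routine use of the local diffeomorphism property of $\exp$ near $0\in\mathfrak{a}_\C$.
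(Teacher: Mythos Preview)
Your argument is correct. The paper does not actually prove this lemma; it quotes it as \cite[Lemme 1]{Clers1976} without further justification, so there is no ``paper's own proof'' to compare against beyond the citation. Your approach---applying the analytic inverse function theorem to the multiplication $K_\C\times A_\C\times N_\C\to G_\C$ at the identity (using the complexified Iwasawa direct sum $\mathfrak{g}_\C=\mathfrak{k}_\C\oplus\mathfrak{a}_\C\oplus\mathfrak{n}_\C$ to see the differential is an isomorphism), pulling back the $A_\C$-component through the local logarithm, and then intersecting over $K$-conjugates via a tube-lemma/compactness argument to obtain a $K$-invariant neighbourhood---is exactly the standard route and is essentially what Clerc's original proof does as well.
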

The map $H$ coincides with the Iwasawa projection on $G\cap V$. Since $H(k^{-1}gk)=H(gk)$ for any $g\in G$, $k\in K$, we can extend the expression of $\psi_\mu$ on $G\cap V$ to all of $V$ by analytic continuation (\cite[Lemme 3]{Clers1976}).

\begin{lem}\label{lem:spheriqueext}
    Let $\mu\in \Lambda$. For any $g\in V$, $\psi_\mu(g)=\int_K e^{\mu(H(k^{-1}gk))}dk$.
\end{lem}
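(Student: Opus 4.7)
The strategy, as suggested in the paragraph preceding the statement, is to first establish the equality on the real form $G \cap V$ using the standard Harish-Chandra integral, and then to extend it to all of $V$ by analytic continuation.

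For the first step, recall that $\psi_\mu|_G = \varphi_{-i(\mu+\rho)}$, so plugging $\lambda = -i(\mu+\rho)$ into \eqref{eq:fonctionspherique} (which makes $i\lambda - \rho = \mu$) yields $\psi_\mu(g) = \int_K e^{\mu(H(gk))}\,dk$ for every $g \in G$. I would then use the identity $H(k^{-1}gk) = H(gk)$, valid for $g \in G$ and $k \in K$: writing $gk = k_0\exp H(gk)\,n_0$ in Iwasawa form with $k_0 \in K$ and $n_0 \in N$, one obtains $k^{-1}gk = (k^{-1}k_0)\exp H(gk)\,n_0$ with $k^{-1}k_0 \in K$, so the $\mathfrak{a}$-part of the Iwasawa decomposition is unchanged. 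Substituting produces the formula of the lemma for every $g \in G \cap V$.

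For the second step, I would check that both sides of the claimed equality are holomorphic on $V$. The left-hand side extends holomorphically to all of $G_\C$ because the finite-dimensional representation $\pi_\mu$ of $U$ integrates from the $\C$-linear extension of its differential to $\mathfrak{g}_\C$ and thus lifts to a holomorphic representation of the simply connected complex group $G_\C$. For the right-hand side, Lemma \ref{lem:extan} gives $H : V \to \mathfrak{a}_\C$ holomorphic; since $V$ is invariant under $K$-conjugation by construction, the map $g \mapsto e^{\mu(H(k^{-1}gk))}$ is holomorphic on $V$ for every $k \in K$, and compactness of $K$ permits differentiation under the integral sign, making the integral itself holomorphic in $g$.

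The two holomorphic functions then agree on $G \cap V$, which is a maximal totally real submanifold of the complex manifold $V$. By the identity theorem in several complex variables (applied in local coordinates adapted to $G$, after shrinking $V$ to a connected neighbourhood of $e$ if necessary), they must coincide on all of $V$. The only genuinely new ingredient is the elementary Iwasawa identity $H(k^{-1}gk) = H(gk)$; the main thing to be careful about is checking holomorphicity of both sides cleanly, after which the analytic continuation step is routine.
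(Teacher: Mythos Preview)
Your proposal is correct and follows exactly the approach the paper indicates: the paper itself does not spell out a proof but simply notes that $H(k^{-1}gk)=H(gk)$ for $g\in G$, $k\in K$, and then invokes analytic continuation (citing \cite[Lemme 3]{Clers1976}). Your write-up fills in precisely these details.
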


Unlike the non-compact case, the phase function is now complex-valued. However, for any $u\in U\cap V$, $\re\mu(H(k^{-1}uk))\leq 0$ (\cite[Coro. 2.4]{clerc}) which is the condition to apply the method of stationary phase (see \cite{melin}).\medskip

We will now state the version of the stationary phase approximation we will use, from \cite{Chazarain1974} and \cite{melin}.
\begin{theorem}\label{thm:statphase}
    Let $(Z,g)$ be a compact Riemannian manifold of dimension $d$, $dz$ its volume measure and $U$ an open subset of $\R^n$. Let $f\in C^\infty(Z)$ and $\phi\in C^\infty(Z\times U)$ be complex-valued functions. Let $W_a$ be the set of critical points of $\phi_a:z\mapsto\phi(z,a)$ for $a\in U$ and assume that $W_a=W$ for any $a$. Assume also that $W$ is finite, and for any $w\in W$, the Hessian of $\phi_a$ at $w$ is non-degenerate. Furthermore, suppose that $\re\phi\leq 0$, with equality at critical points $w\in W$. Set $$I(f,a,t)=\int_Z e^{t\phi(z,a)}f(z)dz$$ and fix $a_0\in U$. Then there exists a semi-norm $\nu$ on $C^\infty(Z)$ and an open neighbourhood $U'\subset U$ of $a_0$ such that for any $t\geq 1$, $a\in U'$, $f\in C^\infty(Z)$, $$\left\vert I(f,a,t)-\sum_{w\in W}e^{t\phi(w,a)}t^{-d/2} f(w)\left(\frac{(2\pi)^d}{\det (-\Hess_{\phi_a}(w))}\right)^{\frac{1}{2}}\right\vert\leq v(f)t^{-\frac{d}{2}-1}$$where the square root is taken as the branch of the square root which is deformed to $1$ under the homotopy $(1-s)(-\Hess_{\phi_a}(w)))+s\Id$.
\end{theorem}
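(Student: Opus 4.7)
The plan is a standard localization-plus-Morse-chart argument, adapted to the complex phase case. First I would fix a small neighbourhood $U''$ of $a_0$ and, using the hypothesis that $\re \phi \le 0$ with equality only on the finite set $W$ of critical points, observe by a compactness argument that outside the union of small coordinate balls $B_w$ around the points $w \in W$, there is a constant $\delta>0$ such that $\re\phi(z,a)\le -\delta$ for $(z,a)\in (Z\setminus \bigcup_w B_w)\times U''$. Choosing a smooth partition of unity $1 = \sum_w \chi_w + \chi_\infty$ subordinate to this cover, the term $\int_Z e^{t\phi(z,a)}\chi_\infty(z) f(z)\,dz$ is then bounded in absolute value by $e^{-t\delta}\operatorname{Vol}(Z)\,\|f\|_\infty$, hence is $O(t^{-N})$ for every $N$ and contributes to the error term $\nu(f)t^{-d/2-1}$.

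Near a critical point $w$, I would invoke a parametrised complex Morse lemma: since $\re \phi \le 0$ near $w$ with equality at $w$, $W_a = W$ is stationary in $a$, and the Hessian $\Hess_{\phi_a}(w)$ depends smoothly on $a$ and is nondegenerate, one can produce (after shrinking $U''$) smooth coordinates $x = \chi_a(z)$ defined on $B_w$, depending smoothly on $a \in U''$, such that
\begin{equation*}
\phi(z,a) - \phi(w,a) = \tfrac{1}{2}\langle Q_a x, x\rangle, \qquad Q_a := \Hess_{\phi_a}(w).
\end{equation*}
The condition $\re \phi \le 0$ translates into $\re Q_a \le 0$, which is exactly the setting in which one can make sense of the complex Gaussian $\int_{\R^d} e^{\frac t2 \langle Q_a x, x\rangle}\,dx = (2\pi/t)^{d/2}(\det(-Q_a))^{-1/2}$, with the branch of the square root chosen by continuous deformation of $-Q_a$ to $\Id$ (this is legitimate because the space of symmetric matrices with $\re \le 0$ and nondegenerate is simply connected along the straight-line homotopy).

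To extract the asymptotic, I would write $(\chi_w f)\circ\chi_a^{-1}(x) = (\chi_w f)(w) + R_a(x)$ where $R_a$ vanishes at $0$, multiply by the Jacobian $|\det D\chi_a^{-1}|$, and integrate. The leading Gaussian contribution yields $e^{t\phi(w,a)}\,t^{-d/2}f(w)\bigl((2\pi)^d/\det(-\Hess_{\phi_a}(w))\bigr)^{1/2}$ after collecting the Jacobian into the Hessian factor. The remainder is handled by one integration by parts in $x$ using the vector field $L = \langle Q_a x, \nabla_x\rangle / \langle Q_a x, Q_a x \rangle$ away from $0$, combined with a Taylor expansion of $R_a$ of order $2$ near $0$, producing the bound $\nu(f)t^{-d/2-1}$ for a seminorm $\nu$ controlling finitely many derivatives of $f$, uniformly for $a \in U'\subset U''$.

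The main obstacle is the parametrised complex Morse lemma: one must show that the change of coordinates $\chi_a$ can be chosen smoothly in $a$ while preserving $\re \phi \le 0$, which requires the classical Hörmander--Melin proof of the Morse lemma with complex phase (solving a cohomological equation via a homotopy $\phi_s = (1-s)\phi_a(w) + s\phi_a$ and integrating a vector field) together with careful verification that the straightening deformation is well-defined. Once this is in place, the uniformity in $a$ for the subsequent Gaussian asymptotic is automatic because all constants depend only on finitely many derivatives of $\phi$ and $\chi_a$ on compact sets.
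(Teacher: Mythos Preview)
Your overall localization strategy matches the paper's, but the paper takes a much shorter route: after a partition of unity, it simply \emph{cites} H\"ormander's non-stationary phase lemma \cite[Thm.~7.7.1]{hormander1983analysis} on charts without critical points, and Melin's complex stationary phase \cite[Thm.~2.3]{melin} (together with \cite[Thm.~7.7.5]{hormander1983analysis}) on charts containing a single critical point. The only genuine work in the paper's proof is the bookkeeping showing that the Riemannian volume density $\sqrt{G}$ and the coordinate Jacobian combine so that the Euclidean Hessian in a chart becomes the intrinsic Riemannian Hessian in the final formula. By contrast, you are essentially sketching a proof of Melin's theorem from scratch via a parametrised complex Morse lemma; that is a legitimate but considerably longer road.

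There is, however, a real gap in your handling of the region away from $W$. You write that ``$\re\phi \le 0$ with equality only on the finite set $W$'', and deduce $\re\phi(z,a)\le -\delta$ on $(Z\setminus \bigcup_w B_w)\times U''$, hence exponential decay of the $\chi_\infty$-term. But the hypothesis of the theorem says only that $\re\phi\le 0$ \emph{and} that equality holds \emph{at} the critical points; it does not exclude $\re\phi(z,a)=0$ at non-critical $z$. (In the paper's intended application to $F_{a,\mu}$ this stronger strict negativity is never claimed, and in the related Theorem~\ref{thm:regloccpt} the paper explicitly uses the combined quantity $\|T_kF\|^2 - \re F$ rather than $-\re F$ alone.) So your exponential-decay argument for the $\chi_\infty$ contribution is not justified. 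The fix is exactly what the paper does: on the complement of $\bigcup_w B_w$ the \emph{gradient} of $\phi_a$ is bounded away from zero by compactness, and one integrates by parts against $e^{t\phi}$ (non-stationary phase with $\re\phi\le 0$) to obtain $O(t^{-N})$ for every $N$, controlled by a $C^N$-seminorm of $f$.

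A smaller point: in the critical-chart analysis, the complex Morse change of variables $\chi_a$ is in general complex-valued (you are solving a complex equation), so the integral over the real ball $B_w$ becomes an integral over a real $d$-dimensional contour in $\C^d$, and the reduction to the standard Gaussian requires a contour-deformation or almost-analytic-extension argument. You flag this as the ``main obstacle'' and invoke H\"ormander--Melin, which is correct, but note that once you are citing Melin's machinery for this step you may as well cite his Theorem~2.3 directly, as the paper does.
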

\begin{proof}
    We can cover $Z$ by a finite number of chart open subsets $Z_j$, $j\in J$ which contains at most one element of $W$. We may assume that $\Vert D\phi_a(z) \Vert$ is bounded below on $Z_j$ which does not contain a critical point. Using a partition of unity subordinated to this open cover, we can write $$I(f,a,t)=\sum_{j\in J} I_j(f,a,t)$$where $$I_j(f,a,t)=\int_{Z_j} e^{t\phi(z,a)}f_j(z)dz$$and $\sum f_j(z)=f(z)$ for any $z\in Z$.
    In the chart $Z_j$, consider the local coordinates given by $H_j:Z_j\to \R^d$, chosen such that if $w\in W\cap Z_j$, $H_j(w)=0$. Let $G=\det(g_{i,j})$ where $g_{ij}(z)=g_z(\partial_i,\partial_j)$. Then by definition of the volume measure on $Z$, we have $$I_j(f,a,t)=\int_{\R^d} e^{\phi(H_j^{-1})(x),a}f_j(H_j^{-1}(x))\sqrt{G(H_j^{-1}(x))}dx.$$
    
    First, if $Z_j$ has no critical points, by \cite[Thm. 7.7.1]{hormander1983analysis}, for any $n>0$, there exists $C,C'>0$ such that $$\vert I_j(f,a,t)\vert \leq \frac{C}{t^n}\sum_{\vert k\vert \leq n} \sup \Vert D^k(f_j\sqrt{G})(x)\Vert\leq  \frac{C'}{t^n}\sum_{\vert k\vert \leq n} \sup \Vert D^k(f)(x)\Vert.$$We used Leibniz formula to replace $f_j\sqrt{G}$ by $f$, up to changing the constant $C$ to some $C'$ taking into account norms of differentials of $G$ and of the partition of unity.

    If $w\in Z_j$ is critical, by \cite[Thm. 2.3]{melin}, \cite[Thm. 7.7.5]{hormander1983analysis}, there exists an open neighbourhood $U_j\subset U$ of $a_0$ such that for any $a_0\in U_j$, $$\left\vert I_j(f,a,t)-e^{t\phi_a(w)}t^{-d/2} f_j(w)\sqrt{G(w)}\left(\frac{(2\pi)^d}{\det (-\Hess_{\phi_a\circ H_j^{-1}}(0))}\right)^{\frac{1}{2}}\right\vert\leq v_j(f)t^{-\frac{d}{2}-1}$$where the square root is taken as in the statement of the theorem. Again, the semi-norm $v_j$ should be applied to $f_j\sqrt{G}$ instead of $f$, but since it is defined as a differential operator, by Leibniz formula the inequality remains true with $f$ up to changing the semi-norm.

    For $u,v\in T_0\R^d$, at the critical point we have that $$\Hess_{\phi_a\circ H_j^{-1}}(0)(u,v)=\Hess_{\phi_a}(w)(DH_j^{-1}(0)u,DH_j^{-1}(0)v).$$Fix an orthonormal basis of $T_wZ$ with respect to the inner product $g_w$ and consider the canonical basis $\frac{\partial}{\partial x_i}\vert_0$ of $T_0\R^d$, then $$\det (-\Hess_{\phi_a\circ H_j^{-1}}(0))=\det (-\Hess_{\phi_a}(w))\left(\det DH_j^{-1}(0)\right)^2.$$But since $\partial_i\vert_w=DH_j^{-1}(0) \left(\frac{\partial}{\partial x_i}\vert_0\right)$, we also have that $(g_{ij}(w))_{1\leq i,j\leq n}={}^tAA$ where $A$ is the matrix of $DH_j^{-1}(0)$ in the previous bases, thus $G(w)=\det(DH_j^{-1}(0))^2$. Thus, the previous inequality becomes $$\left\vert I_j(f,a,t)-e^{t\phi_a(w)}t^{-d/2} f_j(w)\left(\frac{(2\pi)^d}{\det (-\Hess_{\phi_a}(w))}\right)^{\frac{1}{2}}\right\vert\leq v_j(f)t^{-\frac{d}{2}-1}.$$

    Thus by triangular inequality, setting $U'=\bigcap U_j$, there is a semi-norm $\nu$ on $C^\infty(Z)$ such that for any $f\in C^\infty(Z)$, $a\in U'$, and $t\geq 1$, $$\left\vert I(f,a,t)-\sum_{w\in W}e^{t\phi(w,a)}t^{-d/2} f(w)\left(\frac{(2\pi)^d}{\det (-\Hess_{\phi_a}(w))}\right)^{\frac{1}{2}}\right\vert\leq v(f)t^{-\frac{d}{2}-1}$$ which concludes the proof.
\end{proof}

We now compute the critical points and the Hessian of the phase function to make use of the previous theorem. This is mostly an application of the results of \cite{DKV} on $G$ that we used in Section \ref{sec:mainsec} and analytic continuation arguments.

For $a\in V$, consider the phase function $$\fonction{F_{a,\mu}}{K}{\C}{k}{\mu(H(k^{-1}ak))}.$$For any $k$, the map $a\mapsto F_{a,\mu}(k)$ is an analytic continuation of the phase function studied in \cite[Section 4]{DKV}. Denote $H_\mu\in \mathfrak{a}$ the unique vector such that for any $H\in \mathfrak{a}_\C$, $\mu(H)=\langle H,H_\mu\rangle$. Recall that $K_\mu$ is the centraliser in $K$ of $H_\mu$. For $\mu\in \Lambda$, $H_\mu \in \overline{\mathfrak{a}^+}$. Then, by \cite[Proposition 5.6]{DKV} and analytic continuation, the map $F_{a,\mu}$ is right $K_\mu$-invariant.

For $x\in G$, let $$\fonction{\theta_x}{K}{K}{k}{\kappa(xk)}.$$By uniqueness of the Iwasawa decomposition on $G$, it is clear that for any $x\in G$ and $k\in K$, $\kappa(xk)=k\kappa(k^{-1}xk)$. Now for $x\in V$, since we can extend $\kappa$ analytically on $V$ by Lemma \ref{lem:extan} and $V$ is $K$-invariant, the element $k\kappa(k^{-1}xk)\in K_\C$ is well-defined and the formula $$\fonction{\theta_x}{K}{K_\C}{k}{k\kappa(k^{-1}xk)}$$extends $\theta$ on $V\times K$, and $x\mapsto \theta_x(k)$ is analytic on $V$ for each $k$. Let $a\in V\cap U_r$ such that $a=\exp(iY)$ with $Y\in Q$.
\begin{lem}\label{lem:crit}
    The set of critical points of $F_{a,\mu}$ is $$\mathcal{C}_\mu=\bigcup_{w\in W} k_wK_\mu$$where $W=N_K(\mathfrak{a})/Z_{K}(\mathfrak{a})$ and $k_w$ is a representative of $w\in W$.
\end{lem}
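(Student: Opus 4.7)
The plan is to work out the differential $dF_{a,\mu}(k)$ using the extended analytic Iwasawa decomposition of Lemma~\ref{lem:extan}, and identify its zeros via root-space considerations. The right $K_\mu$-invariance of $F_{a,\mu}$ observed just before the lemma statement means it suffices to identify the critical locus in $K/K_\mu$: in particular, once we show each $k_w$ is critical, the full coset $k_w K_\mu$ is automatically critical, giving the inclusion $\bigcup_w k_w K_\mu \subseteq \mathcal{C}_\mu$.

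To verify that each $k_w$ is a critical point, I would use that $\Ad(k_w^{-1})$ acts on $\mathfrak{a}$ as $w^{-1}$, so that $k_w^{-1}ak_w = \exp(iw^{-1}Y) \in A_\C \subset V$ has trivial extended Iwasawa decomposition ($\kappa = e$, $H = iw^{-1}Y$, $n = e$). Then, for $X \in \mathfrak{k}$, expanding $e^{-tX}\exp(iw^{-1}Y)e^{tX}$ to first order in $t$ and applying the formula for $DH$ from \cite[Section 5]{DKV} (analytically continued to $V$), the derivative $\left.\frac{d}{dt}\right\vert_{t=0} F_{a,\mu}(k_w e^{tX})$ reduces to a multiple of $\mu\bigl(P_\mathfrak{a}\bigl((\Ad(\exp(-iw^{-1}Y)) - \Id)X\bigr)\bigr)$. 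This vanishes because $\Ad(\exp(iw^{-1}Y))$ preserves the root-space decomposition $\mathfrak{g}_\C = \mathfrak{a}_\C \oplus \mathfrak{m}_\C \oplus \bigoplus_{\alpha \in \Sigma}\mathfrak{g}_\C^\alpha$, acts trivially on $\mathfrak{a}_\C \oplus \mathfrak{m}_\C$, and therefore sends $\mathfrak{k}$ into $\mathfrak{m}_\C \oplus \bigoplus_\alpha \mathfrak{g}_\C^\alpha$, whose image under $P_\mathfrak{a}$ is zero.

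For the reverse inclusion, I would invoke analytic continuation from the non-compact case. The gradient $dF_{a,\mu}$ extends holomorphically in $a$ from the real parameter region $a = \exp(Y)$, $Y \in \mathfrak{a}^+$, to the domain $V$ by Lemma~\ref{lem:extan}, in particular to $a = \exp(iY)$ with $Y \in Q$. In the real case, \cite[Section 5]{DKV} shows the critical set of $k \mapsto \mu(H(k^{-1}ak))$ is $\bigcup_w K_Y k_w K_\mu = \bigcup_w k_w K_\mu$ (using $K_Y = M \subseteq K_\mu$ for $Y \in \mathfrak{a}^+$). The key algebraic input there is the invertibility of operators built from $\Id - \Ad(\exp(-Y))$ on each root component, which becomes $\Id - \Ad(\exp(-iY))$ after our substitution. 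The regularity condition $\alpha(Y) \notin \pi\Z$ defining $Q$ in particular forces $\alpha(Y) \notin 2\pi\Z$, so the relevant factors $1 - e^{-i\alpha(Y)}$ remain nonzero and the DKV critical-point analysis carries over verbatim, yielding $\mathcal{C}_\mu \subseteq \bigcup_w k_w K_\mu$.

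The main technical obstacle I anticipate is carefully justifying this last analytic continuation step: one must verify that the explicit expressions of the gradient in terms of root data retain their non-degeneracy after the substitution $Y \mapsto iY$, and that no additional critical cosets appear from the complex nature of the phase. The non-degeneracy built into the definition of $Q$ is precisely what makes this transfer possible.
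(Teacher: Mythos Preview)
Your treatment of the inclusion $\bigcup_w k_wK_\mu\subseteq\mathcal{C}_\mu$ is fine and in fact slightly more explicit than the paper's: the paper simply analytically continues the vanishing of $T_kF_{a,\mu}$ from real $a\in A$ (where it is \cite[Prop.~5.4]{DKV}) to $a\in V\cap A_\C$, whereas you compute directly at $k_w$.

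The reverse inclusion, however, does \emph{not} ``carry over verbatim'' from \cite{DKV}, and the non-vanishing you single out is not the one that actually matters. The paper's argument proceeds in two genuinely separate steps. First, from the analytically continued formula $T_kF_{a,\mu}(Z)=\langle Z,\Ad(n(k^{-1}ak)^{-1})H_\mu\rangle$, criticality forces $\Ad(n(k^{-1}ak)^{-1})H_\mu-H_\mu\in\mathfrak{p}_\C\cap\mathfrak{n}_\C=\{0\}$, so $n(k^{-1}ak)$ centralises $H_\mu$. This step alone says nothing about $k$. Second, the paper analytically continues a \emph{different} gradient expression from \cite[Lem.~5.9]{DKV}, namely
\[
T_kF_{a,\mu}(Z)=-i\bigl\langle [Y,\Ad(\theta_a(k))H_\mu],\,\Tilde Z\bigr\rangle,
\qquad \Tilde Z=\frac{\sin\ad Y}{\ad Y}\circ\Ad(k)(Z),
\]
and then \emph{proves by hand} that $Z\mapsto\Tilde Z$ is an isomorphism of $\mathfrak{k}$: its eigenvalues on $\mathfrak{k}^\alpha$ are $\sin(\alpha(Y))/\alpha(Y)$, which are nonzero precisely because $Y\in Q$ means $\alpha(Y)\notin\pi\Z$. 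Your proposed condition $1-e^{-i\alpha(Y)}\neq 0$, i.e.\ $\alpha(Y)\notin 2\pi\Z$, is strictly weaker and comes from the wrong operator. Only after this isomorphism is established can one conclude $[Y,\Ad(\theta_a(k))H_\mu]=0$, combine with step one to obtain $[Y,\Ad(k)H_\mu]=0$, and then invoke \cite[Prop.~1.2]{DKV} (using that $Y$ is regular) to place $k$ in $\mathcal{C}_\mu$.

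In short, the reverse inclusion is where the work is, and your proposal does not supply it: you need the second gradient formula, the explicit verification that $\sin(\ad Y)/\ad Y$ is invertible on $\mathfrak{k}$ for $Y\in Q$, and the concluding structural argument---none of which is a mechanical substitution $Y\mapsto iY$ in \cite{DKV}.
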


\begin{proof}
    We identify $T_kK$ with $\mathfrak{k}$ under the isomorphism $T_eL_k$. Then by \cite[Lemma 5.1 and Corollary 5.2]{DKV}, for any $x\in G$, the tangent map of $F_{x,\mu}$ at $k$ is \begin{equation}\label{eq:tangphasecpt}T_kF_{x,\mu}:Z\mapsto \langle Z, \Ad(n(k^{-1}xk)^{-1})(H_\mu)\rangle.\end{equation}Again by Lemma \ref{lem:extan}, for $k\in K$ and $Z\in \mathfrak{k}$ fixed, the right-hand side of \eqref{eq:tangphasecpt} extends to a well-defined analytic function of $x$ on $V$. The map $x\mapsto F_{x,\mu}(k)$ is also analytic on $V$ for $k\in K$ fixed. For $Z\in \mathfrak{k}$, $$T_kF_{x,\mu}(Z)=\underset{t\to 0}{\lim} \frac{F_{x,\mu}(k\exp(tZ))-F_{x,\mu}(k)}{t}$$and the convergence is uniform for $x$ in a compact set. Thus, as a uniform limit of analytic functions, $T_kF_{x,\mu}(Z)$ is analytic in $x\in V$. Thus, both sides of \eqref{eq:tangphasecpt} extends analytically to functions of $x\in V$ which coincides on $G\cap V$, thus by uniqueness of analytic continuation, \eqref{eq:tangphasecpt} holds for any $x\in V$.

    Similar arguments of analytic continuation will be used several times in what follows. We will not give full details as the proofs are identical.

    By \cite[Prop. 5.4]{DKV}, any $k\in C_\mu$ is a critical point of $F_{a,\mu}$ for $a\in A$. For $\mu\in \Lambda,k\in C_\mu,Z\in \mathfrak{k}$ fixed, the map $a\mapsto T_kF_{a,\mu}(Z)$ is analytic in $a\in V\cap A_\C$ and zero on $A$. Thus, by analytic continuation it is identically $0$ and $k$ is a critical point of $F_{a,\mu}$.

    Conversely, let $k$ be a critical point of $F_{a,\mu}$. Then $\Ad(n(k^{-1}ak)^{-1})(H_\mu)\in \mathfrak{g}_\C$ is orthogonal to $\mathfrak{k}$ so $\Ad(n(k^{-1}ak)^{-1})(H_\mu)\in \mathfrak{p}_\C$. Since $H_\mu\in \mathfrak{a}\subset \mathfrak{p}_\C$, we get that $$\Ad(n(k^{-1}ak)^{-1})(H_\mu)-H_\mu\in \mathfrak{p}_\C.$$Furthermore, for any $X\in \mathfrak{n}_\C$, $\Ad(\exp(X))(H_\mu)=e^{\ad(X)}(H_\mu)=H_\mu \mod \mathfrak{n}_\C$. Since $\exp(\mathfrak{n}_\C)$ generates $N_\C$, we get $\Ad(n)(H_\mu)=H_\mu \mod \mathfrak{n}_\C$ for any $n\in N_\C$. Thus, $$\Ad(n(k^{-1}ak)^{-1})(H_\mu)-H_\mu\in \mathfrak{n}_\C$$so $$\Ad(n(k^{-1}ak)^{-1})(H_\mu)-H_\mu\in \mathfrak{p}_\C \cap \mathfrak{n}_\C=\{0\}.$$So we get \begin{equation}
        \label{eq:nstab} \Ad(n(k^{-1}ak))(H_\mu)=H_\mu.
    \end{equation}

    From \cite[Lemma 1.1 and Lemma 5.9]{DKV}, we also have for $a'=\exp(Y')\in A$, $k\in K$ and $Z\in \mathfrak{k}$ that $$T_kF_{a',\mu}(Z)=-\langle [Y',\Ad(\theta_{a'}(k))(H_\mu)],\Tilde{Z}\rangle$$where $\Tilde{Z}= \left(\sinh \ad(Y')/\ad(Y')\right)\circ \Ad(k)(Z)$. By analytic continuation, we extend this expression to $V$. In particular for $a=\exp(iY)$, we get $$T_kF_{a,\mu}(Z)=-i\langle [Y,\Ad(\theta_a(k))(H_\mu)],\Tilde{Z}\rangle$$where $\Tilde{Z}= \left(\sin \ad(Y)/\ad(Y)\right)\circ \Ad(k)(Z)$. 
    
    We claim that the map $Z\mapsto \Tilde{Z}$ is an isomorphism of $\mathfrak{k}$. Since $\Ad(k)$ is an isomorphism of $\mathfrak{k}$, it suffices to show that $$T=(\sin \ad(Y)/\ad(Y))=\sum_{n\geq 0} \frac{(-1)^n}{(2n+1)!}\ad(Y)^{2n}$$is an isomorphism of $\mathfrak{k}$. Consider the basis of $\mathfrak{k}$ used in the proof of Lemma \ref{lem:kaksubmersion}. Let $Y_1,\cdots,Y_r$ be a basis of $\mathfrak{m}=\mathfrak{k}^{\mathfrak{a}}$. For $\alpha\in \Sigma^+$, let $Z_{\alpha,1},\cdots,Z_{\alpha,m(\alpha)}$ be a basis of $\mathfrak{g}^\alpha$. Let $Z_{\alpha,i}=Z_{\alpha,i}+\theta(Z_{\alpha,i})\in \mathfrak{k}$. Then $(Y_i)_{1\leq i\leq r}\cup (Z_{\alpha,i}^+)_{\alpha\in \Sigma^+,1\leq i\leq m(\alpha)}$ is a basis of $\mathfrak{k}$. In this basis, $T$ is a diagonal operator and its eigenvalues are nonzero. Indeed, for any $1\leq i\leq r$, we have $T(Y_i)=Y_i$ - since $Y_i$ commutes with $\mathfrak{a}$. For any $\alpha\in \Sigma^+$ and $1\leq i\leq m(\alpha)$, we have $\ad(Y)(Z_{\alpha,i})=\alpha(Y)Z_{\alpha,i}$ and $\ad(Y)\theta(Z_{\alpha,i})=-\alpha(Y)\theta(Z_{\alpha,i})$. Thus, $\ad(Y)^2(Z_{\alpha,i}^+)=\alpha(Y)^2Z_{\alpha,i}^+$. Hence, $$T(Z_{\alpha,i}^+)=\sum_{n\in \N} \frac{(-1)^n\alpha(Y)^{2n}}{(2n+1)!}Z_{\alpha,i}^+=\frac{\sin(\alpha(Y))}{\alpha(Y)}Z_{\alpha,i}.$$Since we assumed $Y\in Q$, $\alpha(Y)\not\in \pi\Z$ for any $\alpha\in \Sigma^+$ and thus the claim is proved and $T$ is an isomorphism.
    
    Hence if $k$ a critical point of $F_{a,\mu}$, we get that $[Y,\Ad(\theta_a(k))(H_\mu)]\in \mathfrak{g}_\C$ is orthogonal to $\mathfrak{k}$ thus $[Y,\Ad(\theta_a(k))(H_\mu)]\in \mathfrak{p}_\C$. But since $\theta_a(k)\in K_\C$, we also have $\Ad(\theta_a(k))(H_\mu)\in \mathfrak{p}_\C$ thus \begin{equation}\label{eq:kstab}[Y,\Ad(\theta_a(k))(H_\mu)]\in [\mathfrak{p}_\C,\mathfrak{p}_\C]\cap \mathfrak{p}_\C=\mathfrak{k}_\C\cap  \mathfrak{p}_\C=\{0\}.\end{equation}
    Since $k^{-1}ak\in V$, we have by definitions of $\kappa,H,n$ that $$ak=k\kappa(k^{-1}ak)\exp(H(k^{-1}ak))n(k^{-1}ak)=\theta_a(k)\exp(H(k^{-1}ak))n(k^{-1}ak).$$
    Thus combining \eqref{eq:nstab} and \eqref{eq:kstab}, we get that \begin{align*}
        [Y,\Ad(ak)(H_\mu)] & = [Y,\Ad(\theta_a(k))\circ \Ad(\exp(H(k^{-1}ak)))\circ \Ad(n(k^{-1}ak))(H_\mu)]\\
        & = [Y,\Ad(\theta_a(k)(H_\mu)]\\
        & = 0
    \end{align*}
    Since $\Ad(a)$ is an automorphism of Lie algebra and $A_\C$ is abelian, we get $$[\Ad(a^{-1})(Y),\Ad(k)(H_\mu)]=[Y,\Ad(k)(H_\mu)]=0.$$Now since $k\in K$ and $Y$ is regular, we conclude with \cite[Prop. 1.2]{DKV} that $k\in \mathcal{C}_\mu$.
\end{proof}

We now see $F_{a,\mu}$ as a function defined on $K/K_\mu$, whose distinct critical points are $k_wK_\mu$ for $w\in W/W_\mu$ - thus, there are only finitely many such points. For $\alpha\in \Sigma^+$, let $\mathfrak{k}^\alpha=\mathfrak{k}\cap (\mathfrak{g}^\alpha \oplus \mathfrak{g}^{-\alpha})$. Denote $\Sigma^+(\mu)=\left\lbrace \alpha\in \Sigma^+ \vert \langle \alpha,\mu\rangle \neq 0\right\rbrace$ and $$\mathfrak{l}_\mu=\bigoplus_{\alpha\in \Sigma^+(\mu)} \mathfrak{k}^\alpha.$$Then $\dim K/K_\mu=\dim(\mathfrak{l}_\mu)=\sum_{\alpha\in \Sigma^{+}(\mu)}m(\alpha)=n(\mu)$. Let $F_\alpha:\mathfrak{l}_\mu \to \mathfrak{k}^\alpha$ be the orthogonal projection.

Let $w\in W/W_\mu$. The value of the phase is $F_{a,\mu}(k_wK_\mu)=i\mu(w^{-1}Y)=i(w\mu)(Y)$. By analytic continuation and \cite[Prop. 6.5]{DKV}, the Hessian at $k_wK_\mu$ is given by $$\Hess_{F_{a,\mu}}(k_wK_\mu)(Y,Z)=-\langle Y,L_{a,\mu,w}(Z)\rangle$$where \begin{equation}\label{eq:hessvp}L_{a,\mu,w}=-\frac{1}{2}\sum_{\alpha\in \Sigma^+(\mu)}\langle\alpha,\mu\rangle\left(1-e^{-2(w\alpha)(iY)}\right)F_\alpha\end{equation}and $-\langle \cdot,\cdot\rangle$ is positive-definite on $\mathfrak{k}$. Finally, to apply the method of stationary phase (\cite{melin, Chazarain1974}), we need to compute the square root of $\det(-L_{a,\mu,w})^{-1}$ which is continuously deformed to $1$ by the homotopy $s\Id+(1-s)(-L_{a,\mu,w})$.

If $w\alpha\in \Sigma^+$, we have \begin{equation*}
    \frac{1}{2}\langle\alpha,\mu\rangle\left(1-e^{-2(w\alpha)(iY)}\right)  = i\langle\alpha,\mu\rangle e^{-i(w\alpha)(Y)}\sin((w\alpha)(Y))
\end{equation*}and $0<(w\alpha)(Y)<\pi$ so we take $e^{i\frac{\pi}{4}}\langle\alpha,\mu\rangle^{\frac{1}{2}} e^{-i\frac{(w\alpha)(Y)}{2}}\sin((w\alpha)(Y))^{\frac{1}{2}}$ as a square root.

If $w\alpha\in -\Sigma^+$, we have \begin{equation*}
    \frac{1}{2}\langle\alpha,\mu\rangle\left(1-e^{-2(w\alpha)(iY)}\right)  = -i\langle\alpha,\mu\rangle e^{-i(w\alpha)(Y)}\vert\sin((w\alpha)(Y))\vert
\end{equation*}since $-\pi<(w\alpha)(Y)<0$, so we take $e^{-i\frac{\pi}{4}}\langle\alpha,\mu\rangle^{\frac{1}{2}} e^{-i\frac{(w\alpha)(Y)}{2}}\vert\sin((w\alpha)(Y))\vert^{\frac{1}{2}}$ as a square root.

Let $\sigma_w=\sum_{\alpha\in \Sigma^{+}(\mu)}m(\alpha)\operatorname{sgn}((w\alpha)(Y))$. We get \begin{equation}
    \det(-L_{a,\mu,w})^{-\frac{1}{2}} = e^{-i\sigma_w \frac{\pi}{4}}e^{i(w\rho_\mu)(Y)}\prod_{\alpha\in \Sigma^+(\mu)} \langle\alpha,\mu\rangle^{-\frac{m(\alpha)}{2}} \vert\sin((w\alpha)(Y))\vert^{-\frac{m(\alpha)}{2}}
\end{equation}where $\rho_\mu=\frac{1}{2}\sum_{\alpha\in \Sigma^+(\mu)} m(\alpha)\alpha$.

\begin{theorem}\label{thm:regmaxcpt}
    For $\mu\in \Lambda$, let $\Psi_\mu = \psi_\mu \circ \exp\vert_{iQ}$. Let $r=\lfloor \kappa(G)\rfloor$ and $\delta=\kappa(G)-r$. Then for any $\delta'>\delta$, the family $\left(\Psi_\mu\right)_{\mu\in \Lambda}$ is not bounded in $C^{(r,\delta')}(iQ)$.
\end{theorem}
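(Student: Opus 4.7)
The strategy will be a direct transcription of the argument proving Theorem~\ref{thm:opti}, with the integral representation of Lemma~\ref{lem:spheriqueext} in place of \eqref{eq:fonctionspherique}, the stationary phase statement of Theorem~\ref{thm:statphase} in place of \cite[Thm.~9.1]{DKV}, and the critical point and Hessian analysis performed in Lemma~\ref{lem:crit} and the computation leading to \eqref{eq:hessvp}. I begin by choosing $\mu_0\in\Lambda$ with $n(\mu_0)=2\kappa(G)$; by \eqref{kappamin} and the structure of $\Lambda$, such a $\mu_0$ can be taken as a suitable positive integer multiple of a fundamental weight $\mu_i$ which realises the minimum. Then, for each integer $t\geq 1$, $t\mu_0\in\Lambda$, and by Lemma~\ref{lem:spheriqueext}, for any $Y\in Q$ with $\exp(iY)\in V$,
\begin{equation*}
\Psi_{t\mu_0}(Y)=\int_K e^{tF_{\exp(iY),\mu_0}(k)}\,dk.
\end{equation*}
Differentiating $r$ times in $Y$ and using the Faà di Bruno rule as in \eqref{eq:derivphient}, one writes $D^r\Psi_{t\mu_0}(Y)(X)=\sum_{j=0}^r t^j I_j(Y,X,t)$, where $I_j(Y,X,t)=\int_K e^{tF_{\exp(iY),\mu_0}(k)}h_j(Y,X)(k)\,dk$ with $h_j$ smooth in all variables, the top coefficient being $h_r(Y,X)(k)=\prod_{i=1}^r DF_{\exp(iY),\mu_0}(k)(X_i)$.

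Next, I apply Theorem~\ref{thm:statphase} to each $I_j$, realised as an integral on the compact manifold $K/K_{\mu_0}$ (on which $F_{a,\mu_0}$ descends by right $K_{\mu_0}$-invariance), averaging the $h_j$ over $K_{\mu_0}$ fibres. Lemma~\ref{lem:crit} gives the finite critical set $\{k_wK_{\mu_0}\}_{w\in W/W_{\mu_0}}$ and the Hessian computation of \eqref{eq:hessvp} together with the choice of square root performed after it yield explicit coefficients $c_{w,Y}(g)$. Covering a compact subset of $Q$ containing the relevant base point by finitely many neighbourhoods furnished by Theorem~\ref{thm:statphase}, one gets uniformly for $t\geq 1$
\begin{equation*}
I_j(Y,X,t)=t^{-\kappa(G)}\sum_{w\in W/W_{\mu_0}}e^{it(w\mu_0)(Y)}c_{w,Y}(h_j(Y,X))+O(t^{-\kappa(G)-1}).
\end{equation*}
Since $t^jI_j=O(t^{r-1-\kappa(G)})=O(t^{-\delta-1})$ for $j<r$, only $j=r$ contributes to leading order, giving $D^r\Psi_{t\mu_0}(Y)(X)=t^{-\delta}S_t(Y,X)+O(t^{-\delta-1})$ with $S_t(Y,X)=\sum_{w\in W/W_{\mu_0}}e^{it(w\mu_0)(Y)}c_{w,Y}(h_r(Y,X))$.

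The conclusion is obtained as in the end of Theorem~\ref{thm:opti}. The linear forms $w\mu_0$ for $w\in W/W_{\mu_0}$ are pairwise distinct and nonzero, and the functions $Y\mapsto c_{w,Y}(h_r(Y,X))$ can be made simultaneously nonvanishing at some chosen $Y_0$ by an appropriate choice of $X$ (this is the step where I expect the main work, see below). Lemma~\ref{lem:expo} then provides $x\in iQ$, constants $C,d>0$ and an open set $\mathcal{V}$ with $0\in\overline{\mathcal{V}}$ such that, for $y=x+h$ with $h\in\mathcal{V}$ and $N\geq d/\Vert h\Vert$, one has $N^{-1}\sum_{t=m}^{m+N-1}\vert S_t(x)-S_t(y)\vert^2\geq C$. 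Assuming for contradiction that $(\Psi_{t\mu_0})_{t\in\N}$ is bounded in $C^{(r,\delta')}(iQ)$ with $\delta'>\delta$, combining this lower bound with the estimate $\Vert D^r\Psi_{t\mu_0}(x)-D^r\Psi_{t\mu_0}(y)\Vert\leq D\Vert x-y\Vert^{\delta'}$ and the choices $m\sim\Vert x-y\Vert^{-\delta'}$, $N\sim d/\Vert x-y\Vert$, one obtains exactly the chain \eqref{eq:step5}--\eqref{eq:step7}, contradicting $\delta'>\delta$ as $y\to x$.

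The step I expect to be genuinely delicate is verifying that, for any open subset of $Q$ and any fixed $w$, one can select $Y_0$ and $X$ with $c_{w,Y_0}(h_r(Y_0,X))\neq 0$. In the noncompact case this uses the explicit shape $g_r(x,X)(k)=e^{-\rho(H(\exp xk))}\prod_i\lambda(Df_k(x)(X_i))$ together with the $K_\mu$-invariance of $\lambda\circ f_k(x)$ (\cite[Prop.~5.6]{DKV}). Here the analogue should follow by applying the same formulas via analytic continuation: $DF_{\exp(iY),\mu_0}(k)(X)$ extends from its expression on $G$ to $V$, the right $K_{\mu_0}$-invariance persists on $V$ by uniqueness of analytic continuation as in the proof of Lemma~\ref{lem:crit}, so $h_r$ is constant along $k_wK_{\mu_0}$ and it suffices to pick $Y_0,X$ with $\mu_0(D_YF_{\exp(iY_0),\mu_0}(k_w)(X_i))\neq 0$ for each $i$, which is a non-vanishing condition on a finite product of analytic functions of $X$. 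Everything else is bookkeeping directly parallel to Section~\ref{sec:mainsec}.
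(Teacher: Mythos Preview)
Your proposal is correct and follows essentially the same route as the paper's proof: choose a fundamental weight $\mu_0\in\Lambda$ realising $n(\mu_0)=2\kappa(G)$, expand $D^r\Psi_{n\mu_0}$ via Lemma~\ref{lem:spheriqueext}, apply Theorem~\ref{thm:statphase} on $K/K_{\mu_0}$ with the critical data from Lemma~\ref{lem:crit} and \eqref{eq:hessvp}, isolate the top-order term $S_n$, and conclude with Lemma~\ref{lem:expo} and the arithmetic of \eqref{eq:step5}--\eqref{eq:step7} exactly as in Theorem~\ref{thm:opti}. You are in fact more scrupulous than the paper on the non-vanishing hypothesis of Lemma~\ref{lem:expo}: the paper simply asserts that the hypotheses hold, while you correctly sketch why $h_r$ is constant on each $k_wK_{\mu_0}$ by analytic continuation of the $K_{\mu_0}$-invariance and can be made nonzero at a point. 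One small notational slip: in your last paragraph you apply $\mu_0$ twice (writing $\mu_0(D_YF_{\exp(iY_0),\mu_0}(k_w)(X_i))$), but $F_{a,\mu_0}$ is already scalar-valued, so the outer $\mu_0$ should be dropped.
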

The proof of this result is a variation of the proof of Theorem \ref{thm:opti} using the expression of spherical functions of $(U,K)$ from Lemma \ref{lem:spheriqueext} and the method of stationary phase for complex-valued phase functions.
\begin{proof}
    By \eqref{kappamin}, we see that $\kappa(G)$ is attained on fundamental weights, which are elements of $\Lambda$. Thus, we can choose $\mu\in \Lambda$ be such that $n(\mu)=2\kappa(G)$. Let $V$ be as above given by Lemma \ref{lem:extan} and $V'$ an open subset of $Q$ such that $a=\exp(iY)\in V$ for any $Y\in V'$. Let $d(kK_\mu)$ be the image of the Haar measure on $K$ on $K/K_\mu$. Then for any $n\in \N$ and $Y\in V'$, $\Psi_{n\mu}(iY)=\int_{K/K_\mu} e^{n\mu(H(k^{-1}\exp(iY)k))}d(kK_\mu)$. As in the proof of Theorem \ref{thm:opti}, for any $X\in \mathfrak{a}^r$ there are functions $g_j(Y,X)\in C^\infty(K/K_\mu)$ which do not depend on $n\in \N$ such that $$D^r\Psi_{n\mu}(iY)(X)=\sum_{j=0}^r n^j \int_{K/K_\mu} e^{nF_{a,\mu}(k)}g_j(Y,X)(k)d(kK_\mu).$$
    
    Let $I_j(Y,X,n)=\int_{K/K_\mu} e^{nF_{a,\mu}(k)}g_j(Y,X)(k)d(kK_\mu)$ for $1\leq j\leq r$. We want to evaluate this integral as $n$ goes to infinity. Let $d_0(kK_\mu)$ be the volume measure on $K/K_\mu$ associated to the (invariant) Riemannian metric induced by the restriction of the inner product $-\langle\cdot,\cdot\rangle$ on $\mathfrak{k}$. Denote $\operatorname{Vol}(K/K_\mu)=\int_{K/K_\mu} d_0(K/K_\mu)$. By uniqueness of the invariant measure on $K/K_\mu$, we have $d(kK_\mu)=\frac{1}{\operatorname{Vol}(K/K_\mu)}d_0(kK_\mu)$. If $w\in W/W_\mu$, $g\in C^\infty(K/K_\mu)$, let $$c_{w,Y}(g)=\frac{(2\pi)^{n(\mu)/2}}{\operatorname{Vol(K/K_\mu)}}\det(-L_{\exp(iY),\mu,w})^{-\frac{1}{2}}g(k_wK_\mu).$$ By the method of stationary phase (Theorem \ref{thm:statphase}) and the computations on the phase functions $F_{a,\mu}$, there is a neighbourhood $U_Y$ of $Y$ and a constant $D(Y)\geq 0$ such that for any $1\leq j\leq r$, for any $n\in \N^*$, $Y'\in U_Y$ in $V'$ and $X\in \mathfrak{a}^r$ with $\Vert X_i\Vert=1$ for all $i$, \begin{equation}\label{eq:staphasecompact}
        \left\vert I_j(Y',X,n)-\sum_{w\in W/W_\mu} e^{in(w\mu)(Y')}n^{-\frac{n(\mu)}{2}}c_{w,Y'}(g_j(Y',X)) \right\vert \leq D(Y)n^{-\frac{n(\mu)}{2}-1}.
    \end{equation}
    
    We used that $g_j:V'\times \mathfrak{a}^r\times K\to \C$ is smooth, so that $\nu(g_j(Y',X)$ is bounded when $Y',X$ stay in compact sets. Using the previous inequality in the cases $1\leq j<r$, for any $Y\in V'$, there is a neighbourhood $V_Y$ of $Y$ and a constant $C(Y)>0$ such that for any $n\geq 1$, $Y'\in V_Y$ and $X$ with $\Vert X_i\Vert=1$ for all $1\leq i\leq r$, \begin{equation}
        \label{eq:distspheriqueint} \left\vert D^r\Psi_{n\mu}(iY')(X)- n^r I_r(Y',X,n)\right\vert \leq C(Y)n^{-1}.
    \end{equation}

    Let $S_n(x)=\sum_{w\in W/W_\mu} e^{in(w\mu)(x)}c_{w,x}(g_r(x,X))$. Since $n(\mu)=2\kappa(G)$ by choice of $\mu$, combining \eqref{eq:staphasecompact} with $j=r$ and \eqref{eq:distspheriqueint} yields that for any $n\in \N^*$ and $x,y\in U_Y\cap V_Y$, 

    \begin{equation}\label{eq:spheriquetolem}\begin{aligned}
    n^{-\delta}\vert S_n(x)-S_n(y)\vert & \leq  n^r\vert n^{-\kappa(G)}S_n(x)-I_r(x,X,n)\vert + n^r\vert I_r(x,X,n)-I_r(y,X,n)\vert \\ 
    &\phantom{\leq}+ n^r\vert I_r(y,X,n)-n^{-\kappa(G)}S_n(y)\vert  \\
     & \leq  2D(Y)n^{r-\kappa(G)-1}+\vert n^rI_r(x,H,n)-D^r\Psi_{n\lambda}(x)(X) \vert \\
     &\phantom{\leq} + \vert D^r\Psi_{n\lambda}(x)(X)-D^r\Psi_{n\lambda}(y)(X) \vert\\
     &\phantom{\leq}+\vert D^r\Psi_{n\lambda}(y)(X)-n^rI_r(y,X,n) \vert\\
     &\leq  2D(Y)n^{-\delta-1}+2C(Y)n^{-1} + \vert D^r\Psi_{n\lambda}(x)(X)-D^r\psi_{n\lambda}(y)(X) \vert\\
     & \leq  \Vert D^r\Psi_{n\lambda}(x)-D^r\Psi_{n\lambda}(y) \Vert + 2(C(Y)+D(Y))n^{-1}.
\end{aligned}\end{equation}

The hypotheses of Lemma \ref{lem:expo} hold for the family of functions $x\mapsto c_{w,x}(g_r(x,X))$, for $U=V'$. Let $C,d,x,V$ be given by Lemma \ref{lem:expo}, $W_x=x+V$ such that for any $y\in W_x$, $m\in \N$, $N\geq \frac{d}{\Vert x-y\Vert}$, \begin{equation}
    \label{eq:applem} \sum_{t=m}^{m+N-1} \vert S_n(x)-S_n(y)\vert^2 \geq CN.
\end{equation} 

The end of the proof follows as in Theorem \ref{thm:opti}.
\end{proof}

\begin{coro}\label{coro:cpt}
    Let $r=\lfloor \kappa(G)\rfloor$ and $\delta=\kappa(G)-r$. Then for any $\delta'>\delta$, there exists a $K$-bi-invariant matrix coefficient of a unitary representation of $U$ which is not in $C^{(r,\delta')}(U_r)$.
\end{coro}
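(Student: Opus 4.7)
The plan is to deduce the corollary by contrapositive from Theorem \ref{thm:regmaxcpt}, passing from the Lie algebra side (bounded family on $iQ$) back to the Lie group side (bounded family on $U_r$) and then from the family of spherical functions to arbitrary $K$-bi-invariant coefficients. All the substantive analytic content is in Theorem \ref{thm:regmaxcpt}; what remains is bookkeeping with the machinery already set up in Sections \ref{sec:preli} and \ref{sec:kfinite}.

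First, suppose for contradiction that every $K$-bi-invariant matrix coefficient of a unitary representation of $U$ lies in $C^{(r,\delta')}(U_r)$. Since $(U,K)$ is a Gelfand pair and $U$ is second countable, Lemma \ref{lem:lienspheriquekbiinv} applied with the open set $U_r \subset U$ yields that the family $(\psi_\mu)_{\mu\in \Lambda}$ of positive-definite spherical functions is bounded in $C^{(r,\delta')}(U_r)$.

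Second, I would translate this boundedness from the group to the Lie algebra using the compact analogue of Corollary \ref{coro:liealgvsliegroup}. By Proposition \ref{prop:kakcpt}, the map $P\colon U_r \to Q$ is smooth and every $K$-bi-invariant function $\varphi$ on $U$ satisfies $\varphi = (\varphi \circ \exp\vert_{iQ}) \circ (iP)$ on $U_r$. Applying Lemma \ref{lem:precomposition} to $iP\colon U_r \to iQ$ transports boundedness in $C^{(r,\delta')}(U_r)$ to boundedness in $C^{(r,\delta')}(iQ)$ for the precomposed family, while applying it to the smooth map $H \mapsto \exp(iH)$ from $iQ$ to $U_r$ gives the converse. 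Exactly as in the proof of Corollary \ref{coro:liealgvsliegroup}, the family-level version follows because Lemma \ref{lem:precomposition} is a continuity statement between Fréchet spaces; hence the hypothesis forces $(\Psi_\mu)_{\mu \in \Lambda} = (\psi_\mu \circ \exp\vert_{iQ})_{\mu\in\Lambda}$ to be bounded in $C^{(r,\delta')}(iQ)$.

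This contradicts Theorem \ref{thm:regmaxcpt}, which asserts precisely that for $\delta' > \delta$ the family $(\Psi_\mu)_{\mu\in\Lambda}$ is not bounded in $C^{(r,\delta')}(iQ)$. The contradiction shows there must exist some $K$-bi-invariant matrix coefficient failing to lie in $C^{(r,\delta')}(U_r)$, completing the proof. There is no real obstacle here: the only delicate point is verifying that the family-level equivalence between regularity on $U_r$ and on $iQ$ genuinely preserves boundedness (not just membership), but this is immediate from the continuity of the precomposition operators and was already used in the noncompact setting.
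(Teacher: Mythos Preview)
Your proof is correct and follows essentially the same approach as the paper: combine Theorem \ref{thm:regmaxcpt}, the transfer between $U_r$ and $iQ$ via Proposition \ref{prop:kakcpt} and Lemma \ref{lem:precomposition}, and Lemma \ref{lem:lienspheriquekbiinv}; the paper states this directly rather than by contrapositive, but the content is identical. One minor expositional slip: precomposition with $iP:U_r\to iQ$ sends $C^{(r,\delta')}(iQ)$ to $C^{(r,\delta')}(U_r)$, and precomposition with $\exp:iQ\to U_r$ goes the other way, so your labeling of which map gives which direction is reversed---but since you invoke both, the argument is unaffected.
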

\begin{proof}
    By Theorem \ref{thm:regmaxcpt}, the family of spherical functions viewed on the Lie algebra is not bounded in $C^{(r,\delta')}(Q)$. By Proposition \ref{prop:kakcpt} and Lemma \ref{lem:precomposition}, it follows that the family of spherical functions of $(U,K)$ is not bounded in $C^{(r,\delta')}(U_r)$. Thus, the result follows from Lemma \ref{lem:lienspheriquekbiinv}.\end{proof}

In \cite{dumas2023regularity}, we made a conjecture on the expected optimal regularity of coefficients of $(U,K)$, which should be $\kappa(G)$.
\begin{Conj}\label{conj}Let $r=\lfloor \kappa(G)\rfloor$ and $\delta=\kappa(G)-r$. Any $K$-finite matrix coefficient of a unitary representation of $U$ is in $C^{(r,\delta)}(U_r)$ and this regularity is optimal.
\end{Conj}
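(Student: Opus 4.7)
The plan is to attack the conjecture by separating it into the two halves it asserts. The negative half, that no $\delta' > \delta$ can be achieved, is already supplied by Corollary \ref{coro:cpt}, so the whole substance lies in the positive half: every $K$-finite matrix coefficient of a unitary representation of $U$ lies in $C^{(r,\delta)}(U_r)$. As in the noncompact case, the first reduction is to $K$-bi-invariant coefficients: the arguments of Section \ref{sec:kfin} (in particular Lemma \ref{lem:smoothkfintokinv} and Proposition \ref{prop:regktypeV}) go through when $G$ is replaced by $U$, only more easily, since for compact $U$ the density of coefficients of irreducible representations is classical and one has a rich finite-dimensional representation theory as used in \cite{dumas2023regularity,dLMdlS}. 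Combining this with Proposition \ref{prop:kakcpt} (the compact analogue of Corollary \ref{coro:liealgvsliegroup}) and Lemma \ref{lem:lienspheriquekbiinv}, the problem reduces to showing that the family $(\Psi_\mu)_{\mu\in\Lambda}$ is bounded in $C^{(r,\delta)}(K)$ for every compact $K \subset Q$.

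To prove this boundedness I would try to upgrade Lemma \ref{lem:spheriqueext} from a neighbourhood of the identity to a local integral representation around every point of $U_r$. Fixing $a_0 = \exp(iY_0)$ with $Y_0 \in Q$, the idea is to seek an open, $K$-conjugation invariant neighbourhood $V_{a_0}$ of $a_0$ in $G_\C$ on which the multiplication $K_\C \times A_\C \times N_\C \to G_\C$ is a local diffeomorphism, giving a holomorphic branch $H_{a_0} : V_{a_0} \to \mathfrak{a}_\C$ of the Iwasawa projection that agrees with $H$ on $V_{a_0} \cap G$. If so, the spherical function should admit, on $U \cap V_{a_0}$, the expression
\begin{equation*}
\psi_\mu(u) = \int_K e^{\mu(H_{a_0}(k^{-1} u k))}\, dk,
\end{equation*}
after which the entire machinery of Section \ref{sec:cptupper} transfers: the critical set computation of Lemma \ref{lem:crit}, the Hessian formula \eqref{eq:hessvp}, and the complex stationary phase of Theorem \ref{thm:statphase} all extend verbatim, because the sign condition $\re\mu(H(k^{-1}uk)) \leq 0$ and the regularity assumption $Y \in Q$ were the only features of $Y_0$ really used. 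The pointwise bounds would then match those in the proof of Theorem \ref{thm:reg}, and a Hölder interpolation between the $r$-th and $(r+1)$-st derivative identical to the one in \eqref{eq:holderstep1}--\eqref{eq:holderstepfin} would yield the desired $C^{(r,\delta)}$ estimate on $V_{a_0}$. Patching finitely many such neighbourhoods over any given compact $K \subset Q$ finishes the job.

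The main obstacle, and the reason the paper can only establish Theorem \ref{mainthmB} on an open subset $V$ near $e$, is precisely the construction of this local analytic branch $H_{a_0}$ together with the required $K$-conjugation invariance of its domain. The global Iwasawa map on $G_\C$ is not a diffeomorphism, and even when $a_0$ itself lies in a chart where one has a local branch, the orbit $\{k^{-1} a k : k \in K\}$ on which the integrand must be defined typically leaves any such chart. Clerc's alternative representation \cite{clerc} avoids this only by integrating over a noncompact domain, which destroys the uniformity of the seminorms appearing in stationary phase and thus cannot be used for the whole family $(\Psi_\mu)$. A plausible workaround would either replace the Iwasawa integral by a Weyl-character/Harish-Chandra $c$-function expansion valid on all of $Q$ (with singularities confined to $\partial Q$), or use Clerc's global formula together with careful truncation and analytic continuation estimates to tame the noncompact tail. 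Either route demands a genuinely new ingredient beyond what the present paper develops, which is why I would expect this step, and not the reduction to bi-invariant coefficients nor the stationary phase itself, to be where the proof attempt of Conjecture \ref{conj} currently stalls.
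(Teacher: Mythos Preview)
Your analysis is essentially correct, but note that the statement you are asked to prove is labeled a \emph{Conjecture} in the paper precisely because the paper does \emph{not} prove it. There is therefore no ``paper's own proof'' to compare against; what the paper offers are partial results (Corollary~\ref{coro:cpt} for the upper bound, Corollary~\ref{coro:cpt2} for some specific families, and Corollary~\ref{coro:regvoisinage} for the lower bound on a neighbourhood of the identity) together with an explicit discussion of the obstruction in the final Remark of Section~\ref{sec:cpt}.

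Your identification of the obstacle matches the paper's exactly: the paper states that while an analytic branch of $H$ exists near any $g\in K_\C A_\C N_\C$, the domain cannot in general be chosen invariant under conjugation by $K$ (because $g\neq e$ is not a fixed point of this action), so the integral representation of Lemma~\ref{lem:spheriqueext} does not globalize. The paper also makes the same point you do about Clerc's formula: the integration domain $K_g$ is noncompact, which destroys the uniformity needed for stationary phase over the whole family $(\Psi_\mu)_{\mu\in\Lambda}$. Your suggestion that one might salvage this via a Harish-Chandra/$c$-function expansion or via truncation of Clerc's integral is reasonable speculation, but the paper does not pursue either route and leaves the conjecture open outside the cases listed in Corollary~\ref{coro:cpt2}.

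One minor correction: you write that the sign condition $\re\mu(H(k^{-1}uk))\leq 0$ and $Y\in Q$ ``were the only features of $Y_0$ really used'' in Section~\ref{sec:cptupper}. This understates the role of Lemma~\ref{lem:extan}: the very existence of a single-valued holomorphic $H$ on a $K$-conjugation-invariant set is what makes the integrand well defined, and this is exactly what fails away from $e$. So the obstruction is not merely technical but structural, as you yourself conclude in your final paragraph.
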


By \cite[Thm. 5.2]{dumas2023regularity}, it suffices to look at $K$-bi-invariant matrix coefficients. This conjecture has been proven in rank $1$ and when $U/K$ is a Lie isomorphic to a Lie group (which is equivalent to $G$ being a complex Lie group), see \cite[Thm. A and B]{dumas2023regularity}. Corollary \ref{coro:cpt} gives a partial result towards this conjecture : the optimal regularity cannot be greater than $\kappa(G)$. Furthermore, the conjecture is also shown in some new specific cases.

\begin{coro}\label{coro:cpt2}
    The conjecture is true for\begin{itemize}
        \item $(U,K)=(SU(p+q),S(U(p)\times U(q)))$ with $q\geq p\geq 2$, which corresponds to $G=SU(p,q)$;
        \item $(U,K)=(SO(8),U(4))$, which corresponds to $G=SO^*(8)$;
        \item $(U,K)=(SO(10),U(5))$, which corresponds to $G=SO^*(10)$;
        \item $(U,K)=(Sp(n),Sp(2)\times Sp(n-2))$ with $n\geq 4$, which corresponds to $G=Sp(2,n-2)$.
    \end{itemize}
\end{coro}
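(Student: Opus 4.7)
The plan is to combine the new result Theorem \ref{mainthmB} with partial results already established in \cite{dumas2023regularity} for these specific families. By \cite[Thm.~5.2]{dumas2023regularity} (the dual of the reduction carried out here in Theorem \ref{thm:kfinite}), the question reduces to $K$-bi-invariant matrix coefficients, so it suffices to prove that every $K$-bi-invariant matrix coefficient of a unitary representation of $U$ lies in $C^{(r,\delta)}(U_r)$. Corollary \ref{coro:cpt} already supplies the optimality half of Conjecture \ref{conj}, so only the positive (regularity) part remains, and by Lemma \ref{lem:lienspheriquekbiinv} it is enough to show that the family of positive-definite spherical functions of $(U,K)$ is bounded in $C^{(r,\delta)}(U_r)$.

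The strategy is to cover $U_r$ by two open subsets on each of which the desired bound is known, then glue using a partition of unity. Theorem \ref{mainthmB} supplies an open subset $V \subset U_r$, arising as the image under the $KAK$ map of the domain in Lemma \ref{lem:extan} in which the oscillatory-integral representation of Lemma \ref{lem:spheriqueext} is valid, on which the spherical functions are uniformly bounded in $C^{(r,\delta)}$. Independently, for each of the four families listed, \cite{dumas2023regularity} established $C^{(r,\delta)}$ bounds on another open region $V' \subset U_r$ — roughly, away from the identity — using either explicit formulas for spherical functions (e.g.\ the Jacobi-type expressions available for the Grassmannian $(SU(p+q), S(U(p)\times U(q)))$ and the symplectic Grassmannian $(Sp(n), Sp(2)\times Sp(n-2))$) or the classification-dependent identifications of low-rank cases with Lie-group symmetric spaces plus small rank-two arguments (for $(SO(8),U(4))$ and $(SO(10),U(5))$).

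The main step is then to verify, family by family, that $V \cup V' = U_r$. Using Proposition \ref{prop:kakcpt} this reduces to checking that every point of $iQ$ is either close enough to $0$ to lie in the preimage of $V$, or lies in the preimage of $V'$ (where the previous partial results apply). This is a direct verification in terms of the root system and the Weyl chamber data recorded in Appendix \ref{sec:calculkappa}. The key obstacle I expect is managing the high-rank family $(Sp(n), Sp(2)\times Sp(n-2))$ with $n$ large, since the neighborhood $V$ coming from Lemma \ref{lem:extan} does not grow with $n$ while the fundamental domain $Q$ does; there one needs the prior results of \cite{dumas2023regularity} to already cover the complement of an arbitrarily small neighborhood of the identity — which is exactly the form the known partial results take. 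Once $U_r = V \cup V'$ is confirmed, Lemma \ref{lem:lienspheriquekbiinv}, Proposition \ref{prop:kakcpt} and Lemma \ref{lem:precomposition} together with a partition of unity subordinate to $\{V, V'\}$ assemble the local Hölder bounds into the global bound on $U_r$, completing the conjecture for the listed pairs.
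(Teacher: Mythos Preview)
Your proposal rests on a misconception about what \cite[Thm.~4.9 and 4.18]{dumas2023regularity} actually provide. You assume those results only give $C^{(r,\delta)}$ bounds on some open region $V'$ ``away from the identity'', so that the new local result (Theorem \ref{mainthmB}) is needed to cover a neighbourhood of the identity, and then a gluing argument is required. In fact, for each of the four listed families, \cite[Thm.~4.9 and 4.18]{dumas2023regularity} already establish the full regularity bound on all of $U_r$; what was missing in the earlier paper was only the \emph{optimality} direction. That is precisely what Corollary \ref{coro:cpt} now supplies. The paper's proof is therefore one line: combine Corollary \ref{coro:cpt} (upper bound) with \cite[Thm.~4.9 and 4.18]{dumas2023regularity} (lower bound on all of $U_r$).

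Consequently, the machinery you propose --- invoking Theorem \ref{mainthmB}, verifying $V\cup V'=U_r$ family by family, handling the large-$n$ symplectic case, and gluing via a partition of unity --- is all unnecessary. Your ``key obstacle'' does not arise. Note in particular that Theorem \ref{mainthmB} plays no role in the proof of this corollary; its purpose in the paper is to give evidence for the conjecture in the generic case, not to complete the argument for these specific families where the regularity direction was already known.
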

\begin{proof}
    This is a consequence of Corollary \ref{coro:cpt} and \cite[Thm. 4.9 and 4.18]{dumas2023regularity}.
\end{proof}

\subsection{Towards a lower bound}
We now know that the optimal regularity of $K$-finite matrix coefficient of $U$ on $U_r$ is at most $\kappa(G)$, with equality in many cases. We will conclude this paper by showing that equality holds in all cases, but only in some open subset of $U_r$.

In Theorem \ref{thm:regmaxcpt}, we used the method of stationary phase to obtain a sharp estimate of the spherical functions, but only on a subfamily. In order to obtain a lower bound on regularity, we need estimates for all $\mu\in \Lambda$ that are uniform. Clearly, we know that we cannot obtain sharp estimate of this kind. Indeed, as $\mu$ varies, the critical submanifold of the phase function $F_{a,\mu}$ varies in dimension.

Thus, we cannot treat $\mu$ as a parameter and use stationary phase approximation to obtain uniform estimate when $\mu$ is close to some $\mu_0$. However, in a local chart, we can see $K$ in coordinates $(x,y)$ such that $K_\mu$ is given by $\{y=0\}$, and treat $x,a,\mu$ as parameters, as done in \cite[Prop. 9.2]{DKV} and used in Theorem \ref{thm:reg}. This leads to brutal estimates, as we forget what happens in the $x$ coordinate, but sufficient for our purposes.

A second issue arises in the statement of stationary phase used before (Theorem \ref{thm:statphase}). When choosing a parameter $\mu_0\in \Lambda$, we need to consider a neighbourhood of $\mu_0$ in $\mathfrak{a}^*$. However, a necessary hypothesis is that the real part of the phase is non-positive, which is only true on $\Lambda$, thus there may not exists a neighbourhood of $\mu_0$ such that this is true.\medskip

We begin our proof with another statement of the stationary phase approximation which is well-suited to holomorphic phase functions - instead of simply $C^\infty$ as in Theorem \ref{thm:statphase}.
\begin{theorem}[{\cite[Thm 2.8 and Rem. 2.10]{sjostrand},\cite[Prop. 1.3]{bonthonneau2020fbi}}]\label{thm:statphaseholo}Let $U\times V$ be an open subset of $\C^n\times \C^k$ and $\Gamma$ a $k$-dimensional real submanifold of $V$ with boundary. Let $\Phi,u:U\times V\to \C$ be holomorphic functions. For $x\in U$, denote $\Phi_x:y\mapsto \Phi(x,y)$. Let $x_0\in U$. Assume that there exists a unique critical point $y_0$ of $\Phi_{x_0}$ in $\Gamma$, which is non-degenerate and in the interior of $\Gamma$ and that $\Phi(x_0,y_0)=0$. Furthermore, assume that $\Ima \Phi(x_0,y)\geq 0$ for any $y\in \Gamma$, and $\Ima \Phi(x_0,y)>0$ for any $y\in \partial \Gamma$.

Then, there exists an open neighbourhood $U'\subset U$ of $x_0$ such that for any $x\in U'$, the map $\Phi_x$ has a unique critical point $y_c(x)$ close to $y_0$ and $x\mapsto y_c(x)$ is holomorphic. Furthermore, there exists $C>0$ such that for any $x\in U'$, $t\geq 1$, $$\left\vert e^{-it\Phi(x,y_c(x))}\int_\Gamma e^{it\Phi(x,y)}u(x,y)dy \right\vert \leq C\Vert u\Vert_{\infty,U'\times \Gamma} t^{-k/2}.$$
\end{theorem}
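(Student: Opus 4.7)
The strategy is complex stationary phase by deformation to a steepest-descent contour. First, apply the holomorphic implicit function theorem to $\partial_y \Phi(x,y) = 0$: the non-degeneracy of $\Hess \Phi_{x_0}$ at $y_0$ provides an invertible Jacobian, hence a neighborhood $U'$ of $x_0$ and a unique holomorphic map $y_c : U' \to V$ with $y_c(x_0) = y_0$ and $\partial_y \Phi(x, y_c(x)) = 0$. Setting $\tilde{\Phi}(x,y) = \Phi(x,y) - \Phi(x, y_c(x))$ (still holomorphic), the desired inequality rewrites as $\left| \int_\Gamma e^{it\tilde{\Phi}(x,y)} u(x,y)\, dy \right| \leq C\|u\|_\infty t^{-k/2}$, and $\tilde{\Phi}$ satisfies $\tilde{\Phi}(x, y_c(x)) = 0$ with non-degenerate Hessian.

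Next, the holomorphic Morse lemma with parameter $x$ gives a biholomorphism on a neighborhood $W$ of $y_0$, depending holomorphically on $x \in U'$, under which $\tilde{\Phi}(x, y) = \tfrac{1}{2}\langle A_x z, z\rangle$ for an invertible complex symmetric matrix $A_x$. In these coordinates one has the canonical ``good'' steepest-descent contour $\Gamma_c(x)$ through $y_c(x)$ along which $\Ima \langle A_x z, z\rangle$ is positive definite. The key step is to construct a global homotopy, staying within $V$, from $\Gamma$ to a contour $\Gamma(x)$ that coincides with $\Gamma_c(x)$ near $y_c(x)$ and with $\Gamma$ near $\partial \Gamma$. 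The hypothesis $\Ima \Phi(x_0, y) > 0$ on $\partial \Gamma$ (extended to $x \in U'$ by continuity after shrinking $U'$), combined with the fact that $y_0$ is the unique minimum of $\Ima \tilde{\Phi}(x_0, \cdot)$ on $\Gamma$ with value $0$, ensures that the homotopy can be chosen with $\Ima \tilde{\Phi} \geq 0$ throughout. By Cauchy's theorem (since both $\tilde{\Phi}$ and $u$ are holomorphic), the integral over $\Gamma$ equals the integral over $\Gamma(x)$.

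Finally, split $\Gamma(x) = \Gamma_{\mathrm{near}} \cup \Gamma_{\mathrm{far}}$ where $\Gamma_{\mathrm{near}} \subset W$ is a neighborhood of $y_c(x)$. On $\Gamma_{\mathrm{near}}$, the Morse change of coordinates reduces the piece to $\int e^{it\langle A_x z, z\rangle/2} \tilde{u}(x,z)\, dz$; the standard Gaussian estimate (or explicit evaluation and bound) gives $C\|u\|_\infty t^{-k/2}$, uniformly in $x \in U'$. On $\Gamma_{\mathrm{far}}$, compactness gives $\Ima \tilde{\Phi} \geq c > 0$ uniformly, so $|e^{it\tilde{\Phi}}| \leq e^{-ct}$ yields an exponentially small contribution. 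Summing, the desired bound follows.

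The main obstacle is the contour deformation step: constructing a global homotopy in $V \subset \C^k$ that interpolates between the Morse model contour near the critical point and the original real contour $\Gamma$ elsewhere, while preserving the sign condition $\Ima \tilde{\Phi} \geq 0$ along every stage of the homotopy. This requires the strict positivity on $\partial \Gamma$ (to leave room to deform near the boundary), the non-degeneracy of the Hessian (to have a well-defined model near $y_c(x)$), and uniqueness of the critical point (to avoid obstructions to the homotopy). Once this is in place, the remaining ingredients — the implicit function theorem, the Morse lemma, Cauchy's theorem, and the Gaussian bound — are routine, and the full argument is carried out in the references \cite{sjostrand,bonthonneau2020fbi} cited in the statement.
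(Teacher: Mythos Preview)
The paper does not give its own proof of this theorem: it is quoted verbatim from the cited references \cite{sjostrand,bonthonneau2020fbi} and used as a black box in the proof of Theorem~\ref{thm:regloccpt}. Your outline --- holomorphic implicit function theorem to track $y_c(x)$, holomorphic Morse lemma with parameters, contour deformation by Cauchy's theorem to a good contour on which $\Ima\tilde\Phi\geq 0$, then Gaussian estimate near the critical point plus exponential decay away from it --- is precisely the standard steepest-descent argument developed in those references, so there is nothing to compare on the paper's side.

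One small caution on your sketch: the claim that the homotopy can be chosen with $\Ima\tilde\Phi\geq 0$ throughout is the heart of the matter, and it does not follow merely from uniqueness of the critical point and strict positivity on $\partial\Gamma$. In the references this is handled by first deforming $\Gamma$ to a contour that agrees with a small piece of the good Morse contour near $y_c(x)$ while staying close to $\Gamma$ elsewhere (using that $\Ima\Phi_{x_0}\geq 0$ on $\Gamma$ and the non-degeneracy to control the deformation near $y_0$), and the positivity along the homotopy is obtained by a more careful local construction rather than a global topological argument. Your sketch identifies this as the main obstacle, which is accurate, but ``uniqueness of the critical point to avoid obstructions to the homotopy'' is not quite the mechanism; the uniqueness is rather what ensures the far-region estimate $\Ima\tilde\Phi\geq c>0$ after deformation.
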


If we want to estimate the integral, we need to understand the term $e^{-it\Phi(x,y_c(x))}$. It is clear that this term is bounded below as $t\to +\infty$ except when $\Ima \Phi(x,y_c(x))<0$. But since $y_c(x)$ is not necessarily real, even for $x$ real, it is \textit{a priori} not enough to understand $\Ima \Phi(x,y)$ on real points $y$.

\begin{lem}[{\cite[Lem. 1.16]{bonthonneau2020fbi}}]\label{lem:imaginarypart}
    Let $\Phi$ be as in Theorem \ref{thm:statphaseholo} and $y_c:U'\to V$ given by the theorem. Assume that $(x_0,y_0)\in \R^n\times \R^n$ and $\Ima \Hess_{\Phi_{x_0}}(y_0)$ is positive. Then there exists an open neighbourhood $U''\subset U'$ of $x_0$ such that for any $x\in U''\cap \R^n$ with $\Ima \Phi(x,y)\geq 0$ for all $y\in V\cap \R^k$, then $$\Ima \Phi(x,y_c(x))\geq 0.$$
\end{lem}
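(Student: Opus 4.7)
My plan is to Taylor expand $\Phi(x,\cdot)$ around the real base point $y_r(x) := \re y_c(x)$ to second order in the small imaginary displacement $iv(x)$, where $v(x) := \Ima y_c(x)$, and then use the critical-point relation $\partial_y \Phi(x, y_c(x)) = 0$ together with the positivity hypothesis on $\Ima \Hess_{\Phi_{x_0}}(y_0)$ to control the sign of $\Ima \Phi(x, y_c(x))$. Since $y_c$ is holomorphic on $U'$ with $y_c(x_0) = y_0 \in \R^k$, we have $v(x_0) = 0$ and $v(x) = O(|x-x_0|)$ for $x$ real near $x_0$; and by continuity there is a neighbourhood $U'' \subset U'$ of $x_0$ on which the complex symmetric matrix $H(x) := \Phi_{yy}(x, y_r(x))$ satisfies $\Ima H(x) \geq c I$ for some $c > 0$.

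The key computation proceeds in two steps. First, holomorphic Taylor expansion in $y$ gives
\[
\Phi(x, y_c(x)) = \Phi(x, y_r(x)) + i\, \partial_y \Phi(x, y_r(x)) \cdot v(x) - \tfrac{1}{2}\, v(x)^T H(x) v(x) + O(|v(x)|^3),
\]
so, taking imaginary parts and using that $v(x)$ is real (so that $\Ima[v^T H v] = v^T (\Ima H) v$),
\[
\Ima \Phi(x, y_c(x)) = \Ima \Phi(x, y_r(x)) + \re\bigl[\partial_y \Phi(x, y_r(x)) \cdot v(x)\bigr] - \tfrac{1}{2}\, v(x)^T (\Ima H(x)) v(x) + O(|v(x)|^3).
\]
Second, expanding the critical-point identity $\partial_y \Phi(x, y_c(x)) = 0$ around $y_r(x)$ to first order yields $\partial_y \Phi(x, y_r(x)) = -iH(x) v(x) + O(|v(x)|^2)$, whence $\re[\partial_y \Phi(x, y_r(x)) \cdot v(x)] = v(x)^T (\Ima H(x)) v(x) + O(|v(x)|^3)$. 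Substituting back, the two contributions of $v^T(\Ima H)v$ combine to leave
\[
\Ima \Phi(x, y_c(x)) = \Ima \Phi(x, y_r(x)) + \tfrac{1}{2}\, v(x)^T (\Ima H(x)) v(x) + O(|v(x)|^3).
\]

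To conclude, the first term is $\geq 0$ by the hypothesis applied at the real point $y_r(x) \in V \cap \R^k$; the middle term is $\geq \tfrac{c}{2}|v(x)|^2$; and for $x$ close enough to $x_0$, since $|v(x)| \to 0$, the cubic error is dominated by $\tfrac{c}{4}|v(x)|^2$. This gives $\Ima \Phi(x, y_c(x)) \geq 0$ on $U'' \cap \R^n$ under the hypothesis. The main obstacle I anticipate is essentially bookkeeping: ensuring the Taylor remainders are uniform in $x$ on the chosen neighbourhood and correctly handling the complex-symmetric form $H(x)$ evaluated on real vectors (in particular keeping straight that the relevant identity $\Ima[v^T H v] = v^T (\Ima H)v$ uses the realness of $v$), but no input beyond holomorphicity of $y_c$ and continuity of $\Phi_{yy}$ is needed.
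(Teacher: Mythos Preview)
The paper does not give its own proof of this lemma; it is cited from \cite{bonthonneau2020fbi}, with the accompanying remark explaining that the proof there goes through under the slightly weaker hypothesis stated here (positivity of $\Ima\Hess_{\Phi_{x_0}}(y_0)$ at the single point, rather than $\Ima\Phi(x,y)\geq 0$ for all real $x,y$). So there is no in-paper proof to compare against.

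Your argument is correct and is essentially the standard one for results of this kind. The key identity
\[
\Ima\Phi(x,y_c(x)) = \Ima\Phi(x,y_r(x)) + \tfrac{1}{2}\,v(x)^T\bigl(\Ima H(x)\bigr)v(x) + O(|v(x)|^3)
\]
is exactly what one obtains by combining the second-order Taylor expansion of $\Phi(x,\cdot)$ at the real base point $y_r(x)$ with the first-order expansion of the critical-point equation $\partial_y\Phi(x,y_c(x))=0$; the two occurrences of $v^T(\Ima H)v$ with coefficients $+1$ and $-\tfrac{1}{2}$ combine correctly. The only small point worth making explicit is that $y_r(x)=\re y_c(x)$ lies in $V\cap\R^k$ for $x$ close enough to $x_0$ (since $y_c(x_0)=y_0$ is interior to $\Gamma\subset V\cap\R^k$ and $y_c$ is continuous), so the hypothesis $\Ima\Phi(x,y)\geq 0$ on $V\cap\R^k$ indeed applies at $y_r(x)$. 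The uniformity of the Taylor remainders in $x$ over a small neighbourhood is immediate from smoothness of $\Phi$, and you do not need the quantitative bound $v(x)=O(|x-x_0|)$, only that $v(x)\to 0$ as $x\to x_0$.
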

\begin{remark}
    This lemma is stated with the hypothesis that $\Ima \Phi(x,y)\geq 0$ for \textbf{all} $x,y$ real - and thus of course, $\Ima \Phi(x,y_c(x))\geq 0$ for all $x$ real. However it is clear in the proof that we can remove this assumption by adding positivity of the Hessian matrix, and we get the result only for $x$ real where the assumption holds. This is exactly what will allow us to take care of parameters $\mu_0$ on the boundary of $\Tilde{\Lambda}$.
\end{remark}

We now come back to the Lie group setting, and we will use all the notations introduced in Section \ref{sec:cptupper}. Notice that since the subgroup $K$ is compact, there exists a relatively compact open subset $\Tilde{K}$ of $K_\C$ such that $K\subset \Tilde{K}$. By compactness, up to shrinking the open subset $V$ given by Lemma \ref{lem:extan}, we may assume it is invariant by conjugation by elements of $\Tilde{K}$. Thus, the element $F_{g,\mu}(k)=\mu(H(k^{-1}gk))\in \C$ is well-defined for any $g\in V$, $k\in \Tilde{K}$, $\mu\in \mathfrak{a}_\C^*$ and is holomorphic in all variables.

Recall also that there are fundamental weights $\mu_1,\cdots,\mu_\ell\in \mathfrak{a}^*$ such that $$\Lambda = \left\lbrace\sum_{i=1}^\ell m_i\mu_i \vert m_i\in\N\right\rbrace$$and that for any $u\in U\cap V$, $\mu\in \Lambda$, $$\re \mu(H(u))\leq 0.$$Thus, defining the cone $$\Tilde{\Lambda}=\left\lbrace \sum_{i=1}^\ell t_i\mu_i \vert t_i\in \R_+\right\rbrace,$$it is clear that for any $u\in U\cap V$, $\mu\in \Tilde{\Lambda}$, $$\re \mu(H(u))\leq 0.$$
Furthermore, the function $\psi_\mu:g\mapsto \int_K e^{\mu(H(k^{-1}gk))}dk$ is also well-defined for any $\mu\in \mathfrak{a}_\C^*$.

\begin{theorem}\label{thm:regloccpt}
    For $\mu\in \mathfrak{a}_\C^*$, let $\Psi_\mu=\psi_\mu \circ \exp\vert_{iQ}$. Let $r=\lfloor \kappa(G)\rfloor$ and $\delta=\kappa(G)-r$. Let $Q_0=\{Y\in Q \vert \exp(iY)\in V\}$. Then the family $\left(\Psi_\mu\right)_{\mu\in \Lambda}$ is bounded in $C^{(r,\delta)}(iQ_0)$.
\end{theorem}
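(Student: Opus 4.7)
The plan is to adapt the proof of Theorem \ref{thm:reg} to the compact setting, using the holomorphic stationary phase (Theorem \ref{thm:statphaseholo}) in place of the real-valued version \cite[Prop. 9.2]{DKV}. Parametrize $\mu = tv$ with $v$ in the compact slice $\Tilde{\Lambda} \cap S$ (where $S$ is the unit sphere in $\mathfrak{a}^*$) and $t \geq 0$. Differentiating $\Psi_\mu$ under the integral $s \leq r+1$ times yields
$$D^s \Psi_{tv}(iY)(X) = \sum_{j=0}^s t^j \int_K e^{tv(H(k^{-1}\exp(iY)k))}\, g_j(v, Y, X)(k)\, dk,$$
where the $g_j$ are smooth in all variables. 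The theorem would follow, by the interpolation argument used at the end of Theorem \ref{thm:reg}, from the uniform bound
$$\left|\int_K e^{tv(H(k^{-1}\exp(iY)k))}\, g(k)\, dk\right| \leq \nu_{Y_0,v_0}(g)\, t^{-\kappa(G)},\quad t \geq 1,$$
valid for $(Y,v)$ in some neighbourhood of a fixed $(Y_0, v_0) \in Q_0 \times (\Tilde{\Lambda} \cap S)$ with $v \in \Tilde{\Lambda}$, and for a continuous seminorm $\nu_{Y_0, v_0}$ on $C^\infty(K)$. Covering a compact subset of $Q_0 \times (\Tilde{\Lambda} \cap S)$ by finitely many such neighbourhoods, and using the trivial estimate $|e^{tv(H)}| \leq 1$ for $t \leq 1$, would conclude the proof.

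To establish the uniform stationary-phase estimate, I would fix $(Y_0, v_0)$ and use a partition of unity on $K$ subordinate to a cover made of (i) open sets disjoint from the critical set $\mathcal{C}_{v_0} = \bigcup_w k_w K_{v_0}$ from Lemma \ref{lem:crit}, and (ii) small tubular neighbourhoods of each critical coset $k_w K_{v_0}$. On pieces of type (i), for $v$ close to $v_0$ the function $k \mapsto \re v(H(k^{-1}\exp(iY)k))$ remains bounded above by some $-c < 0$ uniformly, by compactness and Clerc's inequality \cite[Coro. 2.4]{clerc}, so the integral is $O(e^{-ct})$, much better than needed. On pieces of type (ii), I would choose local coordinates $(x, y)$ on $K$ near $k_w$ such that $k_w K_{v_0}$ corresponds to $\{y = 0\}$, reflecting the splitting $T_{k_w}K \simeq \mathfrak{k}_{v_0} \oplus \mathfrak{l}_{v_0}$ with $\dim \mathfrak{l}_{v_0} = n(v_0)$, treat $(x, v, Y)$ as parameters, and apply Theorem \ref{thm:statphaseholo} to the $y$-integration. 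Reading \eqref{eq:hessvp} with the convention that $-\langle\cdot,\cdot\rangle$ is positive-definite on $\mathfrak{k}$, one checks that $\Ima \Hess_\Phi$ with $\Phi = -iv_0 \circ H$ is positive definite on $\mathfrak{l}_{v_0}$ at the critical point (since $Y_0 \in Q$ forces $(w\alpha)(Y_0) \notin \pi\Z$ and $\langle \alpha, v_0\rangle > 0$ for $\alpha \in \Sigma^+(v_0)$). The theorem then delivers a holomorphic critical point $y_c(x, v, Y)$ and an estimate bounded by $C \|g\|_\infty t^{-n(v_0)/2}$ times $|e^{it\Phi(x, v, Y, y_c)}|$; integrating the parameter $x$ out yields the desired $t^{-n(v_0)/2} \leq t^{-\kappa(G)}$.

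The main technical hurdle is controlling the exponential prefactor $|e^{it\Phi(y_c)}|$ to be bounded by $1$, i.e., showing $\Ima \Phi(x, v, Y, y_c(x, v, Y)) \geq 0$ for $v$ in the allowed neighbourhood. Because $v_0$ may lie on the boundary of $\Tilde{\Lambda}$ (some fundamental-weight coefficient vanishing), we cannot vary $v$ in a full $\mathfrak{a}^*$-neighbourhood while preserving $\Ima \Phi(x, v, Y, y) \geq 0$ on real $y$; we can only do so inside $\Tilde{\Lambda}$. This is exactly the setting of Lemma \ref{lem:imaginarypart}: its positive-Hessian hypothesis is secured by the above Hessian computation, and the real-point positivity $\Ima \Phi(x, v, Y, y) \geq 0$ for all real $y$ holds for every $v \in \Tilde{\Lambda}$ near $v_0$ again by Clerc's inequality. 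The remaining verifications are more bookkeeping: one must check that the seminorms $\nu_{Y_0,v_0}$ depend on data in a way that transfers to $g_j(v, Y, X)$ uniformly as $(v, Y, X)$ ranges over compact sets, and that the smoothness of the Iwasawa components on $V$ provided by Lemma \ref{lem:extan} is enough to carry out the chart-wise reduction cleanly.
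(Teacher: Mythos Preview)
Your overall strategy matches the paper's: reduce to a uniform stationary-phase bound of the form $\left|\int_K e^{tF_{a,v}(k)}g(k)\,dk\right|\le \nu(g)\,t^{-\kappa(G)}$ for $(Y,v)$ near a fixed $(Y_0,v_0)\in Q_0\times(\Tilde\Lambda\cap S)$, then interpolate as in Theorem \ref{thm:reg}. The treatment of pieces of type (ii) (near the critical cosets), including the use of Theorem \ref{thm:statphaseholo} in the transverse $y$-variable and of Lemma \ref{lem:imaginarypart} to control $\Ima\Phi(x,y_c(x))$ when $v_0\in\partial\Tilde\Lambda$, is essentially what the paper does.

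There is, however, a genuine gap in your treatment of the type (i) pieces. You assert that away from $\mathcal{C}_{v_0}$ one has $\re v(H(k^{-1}\exp(iY)k))\le -c<0$, giving $O(e^{-ct})$. Clerc's inequality only gives $\re F_{a,v}(k)\le 0$, and there is no reason the zero set of $\re F_{a,v}$ should coincide with the critical set of $F_{a,v}$: if $\re F_{a,v}(k_0)=0$ then $k_0$ is a maximum of $\re F_{a,v}$, hence $d(\re F_{a,v})(k_0)=0$, but $F_{a,v}$ is only the restriction to the totally real submanifold $K\subset K_\C$ of a holomorphic function, so vanishing of $d(\re F)$ does not force $dF=0$ on $T_{k_0}K$. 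The paper handles this correctly: on the complement $K'$ of a neighbourhood of $\mathcal{C}_{v_0}$ one has $\|T_kF_{a,v}\|$ bounded below (no critical points) \emph{and} $\re F_{a,v}\le 0$, so $\|T_kF_{a,v}\|^2-\re F_{a,v}\ge\delta>0$; then \cite[Thm.~7.7.1]{hormander1983analysis} gives $O(t^{-N})$ for any $N$, which suffices.

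A secondary point: Theorem \ref{thm:statphaseholo} as stated requires the amplitude $u$ to be holomorphic, so a smooth partition of unity on $K$ does not directly feed into it. The paper circumvents this by building an ``almost disjoint'' cover of $K$ by chart boxes $N_m=H_{k_m}^{-1}(H_{k_m}(\overline{V_m})\times\Gamma_{k_m})$ (overlapping only on null sets) rather than using cutoffs, so that on each $N_m$ the amplitude $g_j\sqrt{G}\circ H_{k_m}^{-1}$ is genuinely holomorphic. This is a technical but necessary maneuver that your partition-of-unity sketch does not address.
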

\begin{proof}
    First consider $\Tilde{Q}=\{Y\in \mathfrak{a}_\C \vert \exp(Y)\in V\}$. Let $\mu\in \mathfrak{a}_\C^*$ and $Y\in \Tilde{Q}$. Then the phase function $F_{\exp Y,\mu}:k\mapsto\mu(H(k^{-1}\exp(Y)k))$ is well-defined on $\Tilde{K}$ and holomorphic in all variables. Let $t\in\R$, consider $\Psi_{t\mu}(Y)=\int_K e^{tF_{a,\mu}(k)}dk$. Let $s\in \N$ and $X\in \mathfrak{a}^r_\C$. For $0\leq j\leq s$, we can consider the functions $$g_j(\mu,Y,X):k\in \Tilde{K}\mapsto \frac{1}{j!}\frac{d^j}{dt^j}\left.\left(\left(D^s(e^{tF_{a,\mu}(k)})(Y)(X)\right)e^{-tF_{a,\mu}(k)}\right)\right\vert_{t=0}$$where $D$ is the differential operator with respect to the variable $Y$. The functions $g_j$ are holomorphic in all variables. As in Theorem \ref{thm:reg}, we get that \begin{equation}\label{eq:derivPhicpt}D^s\Psi_{t\mu}(Y)(X)= \sum_{j=0}^s t^j \int_K e^{tF_{a,\mu}(k)}g_j(\mu,Y,X)(k)dk.\end{equation}
    Let $S=\{\mu\in \Tilde{\Lambda} \vert \, \Vert \mu\Vert=\min \Vert \mu_i\Vert\}$ which is a compact set. For now, fix $\mu_0\in S$, $Y_0\in Q_0$, $a_0=\exp(iY_0)$. Let $\mathcal{C}=\mathcal{C}_{\mu_0}$ be the critical set of $F_{a_0,\mu_0}$ computed in Lemma \ref{lem:crit}. The critical set $\mathcal{\mu}$ varies with $\mu$, but it depends on $K_\mu$ hence on the set of roots orthogonal to $\mu$. We can choose a neighbourhood $V_{\mu_0}$ of $\mu_0$ in $\mathfrak{a}_\C^*$ such that for any $\mu\in V_{\mu_0}\cap \mathfrak{a}$, $\mathcal{C}_\mu\subset \mathcal{C}$.

    Consider a tubular neighbourhood of $\mathcal{C}$, that is to a say a vector bundle $\pi:E\to \mathcal{C}$ together with a map $J:E\to K$ such that if $0_E$ denote the zero section of the vector bundle, $J(0_E(x))=x$ and $J$ is a diffeomorphism from an open neighbourhood $\Omega$ of $0_E(\mathcal{C})$ to an open neighbourhood of $\mathcal{C}$ (we refer to \cite{lee2003introduction} for more details on the construction of such a bundle). 
    For any $k_0\in \mathcal{C}$, consider an open neighbourhood $U_{1,k_0}$ which is a local trivialization of the bundle. Then $U_{2,k_0}=J(\Omega \cap \pi^{-1}(U_{1,k_0}))$ is a neighbourhood of $k_0$ in $K$. Notice that if $x\in U_{2,k}\cap U_{2,k'}$, then $\pi(J^{-1}(x))\in U_{1,k}\cap U_{1,k'}$. Let $V_{k_0}$ be a neighbourhood of $k_0$ in $\Tilde{K}$ containing $U_{2,k_0}$, which we may assume up to reducing $U_{2,k_0}$ to be the domain of an analytic chart $$\fonction{H_{k_0}}{V_{k_0}}{\C^{\dim(\mathcal{C})}\times \C^{n(\mu_0)}}{k}{(z_1,z_2)}$$ defined in such a way that $H_{k_0}(k_0)=(0,0)$, $H_{k_0}(U_{2,k_0})=H_{k_0}(V_{k_0}\cap K)=H(V_{k_0})\cap (\R^{\dim(\mathcal{C})}\times \R^{n(\mu_0)})$ and $\mathcal{C}$ is given is those local coordinates by $H_{k_0}(U_{2,k_0})\cap\{z_2=0\}$. Indeed, by the computations in Lemma \ref{lem:crit}, there exists $w\in W$ such that $k_0\subset wK_{\mu_0}$, and we may assume that $U_{1,k_0}\cap \mathcal{C}\subset wK_\mu$. Then set $O=\exp_K^{-1}(w^{-1}U_{2,k_0})$, $O_\C=O+iO$ open subset of $\mathfrak{k}_\C$ and $V_{k_0}=w\exp_{K_\C}(O_\C)$ - up to shrinking at each step so that the exponential map is a diffeomorphism in the neighbourhoods considered. Then if we consider a decomposition $\mathfrak{k}=\mathfrak{k}_\mu\oplus E$, the chart $H_{k_0}(k)=\exp_{K_\C}^{-1}(w^{-1}k)$ satisfies the requirements.
    
    Let $x=(Y,\mu,z_1)\in V_1=\Tilde{Q}\times V_{\mu_0}\times \operatorname{pr}_1(H_{k_0}(V_{k_0}))$, $y=z_2\in V_2=\operatorname{pr}_2(H_{k_0}(V_{k_0}))$. We will treat $x$ as a parameter and apply the stationary phase only in the $y$ coordinate. Let $x_0=(Y_0,\mu_0,0)$ and $\Phi(x,y)=-i(F_{\exp(iY),\mu}(H^{-1}_{k_0}(z_1,z_2))-F_{a_0,\mu_0}(k_0))$. Then $\Phi_{x_0}:y\mapsto \Phi(x_0,y)$ has a unique critical point $y_0=0$ in $V_2$, which is non-degenerate. Since $\mu_0\in \Tilde{\Lambda}$ and $\re F_{a_0,\mu_0}(k_0)=0$, we also have $\Ima\Phi(x_0,y)\geq 0$ for any $y\in V_2$. Furthermore, by \eqref{eq:hessvp}, the imaginary part of the Hessian matrix of $\Phi_{x_0}$ at $y_0$ is positive-definite. By Taylor's formula, this implies that $\Ima \Phi(x_0,y)>0$ for $y$ real and close to $y_0$. Thus, we can chose a small ball $\Gamma_{k_0}\subset V_2\cap \R^{n(\mu_0)}$ centered around $y_0$ such that $\Ima \Phi(x_0,y)>0$ for $y\in \partial \Gamma_{k_0}$. Thus, we can apply Theorem \ref{thm:statphaseholo} and Lemma \ref{lem:imaginarypart}. Thus, there is an open neighbourhood $V'_1\subset V_1$ of $x_0$, which we may assume to be of the form $V'_{Y_0}(k_0)\times V'_{\mu_0}(k_0)\times V'_{0}(k_0)$, such that for any $x\in V'_1$, the map $\Phi_x$ has a unique critical point $y_c(x)$. Up to shrinking, we may assume that $V'_{0}(k_0)$ is a ball centered at $0$, whose closure is contained in $V_{k_0}$. For $x=(Y,\mu,z_1)$ real and such that $\mu\in \Tilde{\Lambda}$, Lemma \ref{lem:imaginarypart} ensures that $\Ima \phi(x,y_c(x))\geq 0$. Finally, there is a constant $C_{Y_0,\mu_0,k_0}>0$ such that for any holomorphic map $g$ on $V_1\times V_2$, for any $t\geq 1$ and $x=(Y,\mu,z_1)\in V_1'$ real with $\mu\in \Tilde{\Lambda}$, \begin{equation}
        \label{eq:estimatechart} \left\vert \int_{\Gamma_{k_0}} e^{it\phi(x,y)}g(x,y)dx\right\vert \leq C_{Y_0,\mu_0,k_0}\Vert g\Vert_\infty t^{-n(\mu_0)/2}\leq C_{Y_0,\mu_0,k_0}\Vert g\Vert_\infty t^{-\kappa(G)}.
    \end{equation}For the last inequality, we used that $\kappa(G)\leq \frac{n(\mu)}{2}$ for any $\mu\in \mathfrak{a}^*$ by definition.

    Let $\Omega_{k_0}=H^{-1}_{k_0}((V_0'(k_0)\cap \R^{\dim \mathcal{C}})\times \{0\})=H_{k_0}^{-1}(V_0'(k_0)\times \mathring{\Gamma}_{k_0})\cap \mathcal{C}$. Then $\Omega_{k_0}$ is an open neighbourhood of $k_0$ in $\mathcal{C}$, contained in $U_{1,k_0}$. We will construct an "almost disjoint" cover of $\mathcal{C}$.  Since $\mathcal{C}=\bigcup_{k\in \mathcal{C}} \Omega_k$ is compact, we can extract a finite subcover $\Omega_{k_1},\cdots,\Omega_{k_p}$. Set $V_1=\Omega_{k_1}$ and inductively define $V_j=\Omega_{k_j}\cap \left(\mathcal{C}\setminus \bigcup_{i< j} \overline{V_i}\right)$. This construction yields $p$ disjoint open subsets $V_j$ of $\mathcal{C}$ such that $\mathcal{C}=\bigcup_{j=1}^p \overline{V_j}=\bigcup_{j=1}^p V_j \cup \bigcup_{j=1}^p \partial V_j$. We have that \begin{align*}
       \partial V_j  & \subset  \partial \Omega_{k_j} \cup \partial \left(\mathcal{C}\setminus \bigcup_{i< j} \overline{V_i}\right)\\
         & = \partial \Omega_{k_j} \cup \partial\bigcup_{i< j} \overline{V_i}\\
         & \subset \partial \Omega_{k_j} \cup \bigcup_{i< j}\partial \overline{V_i}\\
         & \subset \partial \Omega_{k_j} \cup \bigcup_{i< j}\partial V_i.
    \end{align*}Since $\partial V_1=\partial \Omega_1$, recursively we obtain $$\partial V_j \subset \bigcup_{i\leq j} \partial \Omega_{k_j}.$$But $\partial \Omega_{k_j}$ is contained in the embedded image of a $\dim \mathcal{C}-1$ dimensional sphere, thus it has Riemannian volume $0$. This implies that each $\partial V_j$ has volume $0$.

    Let $N_j=H_{k_j}^{-1}(H_{k_j}(\overline{V_j})\times \Gamma_{k_j})\subset U_{2,k_j}$ and $D=\bigcup_{j=1}^p N_j$. Since $\overline{V_j}$ are disjoints up to negligible set, and contained in $U_{1,k_j}$ local trivialization of the tubular neighbourhood, then $N_j$ are also disjoints up to negligible sets (for the volume measure on $K$). Indeed, $N_i\cap N_j\subset J(\Omega \cap \pi^{-1}(\partial V_j))$. Note that $\mathcal{C}\subset \mathring{D}$ and set $K'=K\setminus \mathring{D}$. Then $K'\cap D=\partial D\subset \bigcup_j H_{k_j}^{-1}(H_{k_j}(\partial\overline{V_j})\times \Gamma_{k_j})\cup H_{k_j}^{-1}(H_{k_j}(\overline{V_j})\times \partial\Gamma_{k_j})$ which has volume $0$ again. 
    
    Denote $A_{Y_0}=Q\cap\bigcap_{j=1}^p V'_{Y_0}(k_j)$ and $S_{\mu_0}=\bigcap_{j=1}^p V'_{\mu_0}(k_j)$, by finiteness of the intersection, these are still open neighbourhoods of $Y_0,\mu_0$ respectively, and we may assume, up to shrinking if necessary, that they are both bounded. Up to reducing once again $A_{Y_0}$ to a ball, it will be convenient to assume it is convex.

    By assumption, for $a=\exp(iY)$ with $Y\in Q$ and $\mu$ real in $S_{\mu_0}$, $\mathcal{C}_{\mu_0}\subset \mathcal{C}$ so $F_{a,\mu}$ has no critical point in $K'$ since $\mathcal{C}\cap K'=0$. By compactness of $K'$ and since with these parameters, $\re F_{a,\mu}(k)\leq 0$ for any $k\in K$, this implies that $\Vert T_{k}F_{\exp(iY),\mu}\Vert^2 -\re F_{a,\mu}(k)$ is bounded below by $\delta>0$ for $k\in K'$, $Y\in A_{Y_0}$ and $\mu\in  S_{\mu_0}\cap \Tilde{\Lambda}$. Thus, by \cite[Thm. 7.7.1]{hormander1983analysis}, for any $n\in \N$, there exists a semi-norm $\nu_{n,Y_0,\mu_0}$ on $C^\infty(K')$ such that for any $g\in C^\infty(K')$, $Y\in A_{Y_0}$, $\mu\in  S_{\mu_0}\cap \Tilde{\Lambda}$ and $t\geq 1$, \begin{equation}
        \label{eq:estimatenocrit} \left\vert \int_{K'} e^{tF_{\exp(iY),\mu}(k)}g(k)dk\right\vert \leq \nu_{n,Y_0,\mu_0}(g)t^{-n}.
    \end{equation}
    Now, as in Theorem \ref{thm:regmaxcpt}, let $d_0k$ denote the volume measure on $K$ associated to the invariant Riemannian metric $h$ induced by the inner product $-\langle \cdot ,\cdot\rangle$ and $\operatorname{Vol}(K)=\int_K d_0K$. Then the Haar measure $dk$ is $\frac{1}{\operatorname{Vol}(K)}d_0k$. Let also $G(k)=\det\left(h_k(\partial_i,\partial_j)\right)$. Denote also $$\Tilde{g}_{j,m}(\mu,Y,X)(z_1,z_2)=g_j(\mu,Y,X)(k)\sqrt{G(k)}$$where $(z_1,z_2)=H_{k_m}(k)$. Since by construction, the set of points counted several times is negligible, we we can write that for any $1\leq j\leq s$, \begin{multline*} \left\vert \int_K e^{tF_{\exp(iY),\mu}(k)}g_j(\mu,Y,X)(k)dk\right\vert \\ \begin{aligned}
        & \leq \left\vert \int_{K'} e^{tF_{\exp(iY),\mu}(k)}g_j(\mu,Y,X)(k)dk\right\vert\\
        & \phantom{\leq }+\sum_{m=1}^p \left\vert \int_{N_m} e^{tF_{\exp(iY),\mu}(k)}g_j(\mu,Y,X)(k)dk\right\vert \\
        & \leq \left\vert \int_K' e^{tF_{\exp(iY),\mu}(k)}g_j(\mu,Y,X)(k)u_0(k)dk\right\vert\\
        & \phantom{\leq }+\frac{1}{\operatorname{Vol}(K)}\sum_{m=1}^p \left\vert \int_{H_{k_m}(\overline{V_m})}\int_{\Gamma_{k_m}} e^{it\phi((Y,\mu,z_1),z_2)}\Tilde{g}_{j,m}(\mu,Y,X)(z_1,z_2)dz_2dz_1\right\vert \\
    \end{aligned}         
    \end{multline*}
We can now apply \eqref{eq:estimatechart}, and \eqref{eq:estimatenocrit} with $n=\lceil \kappa(G)\rceil$ to get that for any $Y\in A_{Y_0}$, $\mu\in S_{\mu_0}\cap \Tilde{\Lambda}$, $X\in \mathfrak{a}^s$ and $t\geq 1$, \begin{align*} \left\vert \int_K e^{tF_{\exp(iY),\mu}(k)}g_j(\mu,Y,X)(k)dk\right\vert 
        & \leq \nu_{\lceil \kappa(G)\rceil,Y_0,\mu_0}(g_j(\mu,Y,X)u_0)t^{-\lceil \kappa(G)\rceil}\\&\phantom{\leq}+\sum_{m=1}^p \frac{C_{Y_0,\mu_0,k_i}\operatorname{Vol}H_{k_m}(\overline{V_m})}{\operatorname{Vol}(K)} \Vert \Tilde{g}_{j,m}(\mu,Y,X)\Vert_\infty t^{-\kappa(G)}
\end{align*}
Now since the functions $g_j$ are smooth in all variables, they are bounded in $C^\infty(K)$ when $\mu,Y,X$ remain bounded, thus there exists a constant $D_{j,Y_0,\mu_0}>0$ such that for any $Y\in A_{Y_0}$, $\mu\in S_{\mu_0}\cap \Tilde{\Lambda}$, $X\in \mathfrak{a}^s$ with $\Vert X_i\Vert=1$ and any $t\geq 1$, \begin{equation}
    \label{eq:majcptint} \left\vert \int_K e^{tF_{\exp(iY),\mu}(k)}g_j(\mu,Y,X)(k)dk\right\vert \leq D_{j,Y_0,\mu_0}t^{-\kappa(G)}.
\end{equation}
Since $S$ is compact and $S\subset \bigcup_{\mu_0\in S} S_{\mu_0}$, we can extract a finite cover $S_{\mu_1},\cdots,S_{\mu_q}$. Set $$M_{Y_0,s}=\underset{1\leq k\leq q}{\max}\sum_{j=1}^s D_{j,Y_0,\mu_k}.$$
Combining \eqref{eq:majcptint} with \eqref{eq:derivPhicpt}, for any $Y\in A_{Y_0}$, $\mu\in S$ and any $t\geq 1$, \begin{equation}
    \label{eq:majDPHI} \Vert D^s\Psi_{t\mu}(Y)\Vert = \underset{\Vert X_i\Vert=1}{\sup} \vert D^s\Psi_{t\mu}(Y)(X)\vert \leq M_{Y_0,s}t^{s-\kappa(G)}.
\end{equation}
Note that $\Lambda\setminus\{0\}\subset \R_{\geq 1}S$, so the previous equation immediately implies that for any $\mu\in \Lambda\setminus\{0\}$ and $s\leq r$, \begin{equation}
    \label{eq:majDPHI2} \Vert D^s\Psi_{t\mu}(Y)\Vert \leq M_{Y_0,s}.
\end{equation}
Thus if $\kappa(G)$ is an integer, the proof is complete.

Otherwise, $\delta=\kappa(G)-r=\frac{1}{2}$. Then applying \eqref{eq:majDPHI} with $s=r$ and triangular inequality gives that for any $Y,Y'\in A_{Y_0}$, $\mu\in S$, $t\geq 1$, $$\Vert D^r\Psi_{t\mu}(Y)-D^r\Psi_{t\mu}(Y')\Vert\leq 2M_{Y_0,r}t^{-1/2}.$$On the other hand, using \eqref{eq:majDPHI} with $s=r+1$ and the mean value theorem gives that for any $Y,Y'\in A_{Y_0}$, $\mu\in S$, $t\geq 1$, $$\Vert D^{r}\Psi_{t\mu}(Y)-D^r\Psi_{t\mu}(Y')\Vert\leq \underset{Z\in A_{Y_0}}{\sup}\Vert D^{r+1}\Psi_{t\mu}(Z)\Vert \Vert Y-Y'\Vert\leq M_{Y_0,r+1}t^{1/2}\Vert Y-Y'\Vert.$$
Combining both estimates, we get that for any $Y,Y'\in A_{Y_0},\mu\in \Lambda\setminus \{0\}$, \begin{equation}
    \label{eq:finalestimate} \Vert D^{r}\Psi_{t\mu}(Y)-D^r\Psi_{t\mu}(Y')\Vert \leq \left(2M_{Y_0,r}M_{Y_0,r+1}\right)^{1/2} \Vert Y-Y'\Vert
\end{equation}which completes the proof when $\kappa(G)$ is not an integer.
\end{proof}
    
\begin{coro}\label{coro:regvoisinage} Consider $(U,K)$ as before. Let $r=\lfloor \kappa(G)\rfloor$ and $\delta=\kappa(G)-r$. Let $Q_0=\{Y\in Q \vert \exp(iY)\in V\}$ and $U_0=K\exp(iQ_0)K\subset U_r$. Then any $K$-finite matrix coefficient of a unitary representation of $U$ is in $C^{(r,\delta)}(V)$.
\end{coro}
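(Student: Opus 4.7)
The plan is to combine Theorem \ref{thm:regloccpt} with the same structural reductions used in the non-compact case: first pass from the Lie algebra description on $iQ_0$ to the Lie group description on $U_0$, then pass from positive-definite spherical functions to arbitrary $K$-bi-invariant coefficients, and finally pass from $K$-bi-invariant to $K$-finite coefficients.

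First, Theorem \ref{thm:regloccpt} shows that the family $(\Psi_\mu)_{\mu\in\Lambda}$ of spherical functions of $(U,K)$ restricted to $iQ_0$ is bounded in $C^{(r,\delta)}(iQ_0)$. Recall that every spherical function of $(U,K)$ is positive-definite (since $U$ is compact), so this is in fact the full family of positive-definite spherical functions. Using Proposition \ref{prop:kakcpt}, one gets the compact analog of Corollary \ref{coro:liealgvsliegroup}: a $K$-bi-invariant function $\varphi$ on $U$ belongs to $C^{(r,\delta)}(U_0)$ if and only if $\varphi\circ \exp\vert_{iQ_0}$ belongs to $C^{(r,\delta)}(iQ_0)$, and likewise for families. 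Indeed, $P$ is smooth on $U_0$ by Proposition \ref{prop:kakcpt}, so Lemma \ref{lem:precomposition} applies in both directions exactly as in the non-compact case. This translates Theorem \ref{thm:regloccpt} into the statement that the family of positive-definite spherical functions of $(U,K)$ is bounded in $C^{(r,\delta)}(U_0)$.

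Next, Lemma \ref{lem:lienspheriquekbiinv} immediately upgrades this boundedness into the conclusion that every $K$-bi-invariant matrix coefficient of a unitary representation of $U$ lies in $C^{(r,\delta)}(U_0)$. The final step is to promote this from $K$-bi-invariant coefficients to arbitrary $K$-finite coefficients. This is the compact analog of Theorem \ref{thm:kfinite}, which was already established in \cite[Thm. 5.2]{dumas2023regularity} for compact symmetric pairs (the argument there uses a rich finite-dimensional representation theory and so is actually easier than in the non-compact case). Applying this reduction to each $K$-finite matrix coefficient of $U$ yields the stated regularity on $U_0$.

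The only genuine work in this corollary has been done in Theorem \ref{thm:regloccpt}; the remaining steps are purely formal packaging via KAK smoothness, direct-integral decomposition of $K$-bi-invariant coefficients, and the $K$-bi-invariant to $K$-finite reduction from the earlier paper. No new obstacle arises, and the open subset on which the conclusion holds is exactly $U_0=K\exp(iQ_0)K$, which is a nonempty open subset of $U_r$ containing a neighbourhood of the identity.
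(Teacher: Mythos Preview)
Your proof is correct and follows exactly the same route as the paper: Theorem~\ref{thm:regloccpt} together with Proposition~\ref{prop:kakcpt} and Lemma~\ref{lem:precomposition} give boundedness of the spherical functions on $U_0$, then Lemma~\ref{lem:lienspheriquekbiinv} handles all $K$-bi-invariant coefficients, and \cite[Thm.~5.2]{dumas2023regularity} passes to $K$-finite coefficients. One small inaccuracy in your closing remark: $U_0$ does not contain a neighbourhood of the identity, since $e\notin U_r$ (indeed $0\notin Q$, as $\alpha(0)=0\in\pi\Z$ for every root $\alpha$).
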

\begin{proof}
    By Theorem \ref{thm:regloccpt}, Proposition \ref{prop:kakcpt} and Lemma \ref{lem:precomposition}, the family of spherical functions of $(U,K)$ is bounded in $C^{(r,\delta)}(U_0)$. By Lemma \ref{lem:lienspheriquekbiinv}, it follows that any $K$-bi-invariant matrix coefficient of a unitary representation of $U$ is in $C^{(r,\delta)}(U_0)$. Finally, \cite[Thm. 5.2]{dumas2023regularity} allows to extend this regularity to $K$-finite coefficients.
\end{proof}

\begin{remark}
    Combining Corollaries \ref{coro:cpt} and \ref{coro:regvoisinage}, we get that Conjecture \ref{conj} is true for any compact symmetric pair $(U,K)$, but only in some open subset $U_0$ and not all of $U_r$.

    The same proof cannot extend this regularity to all of $U_r$. It is clear that for any $g\in K_\C A_\C N_\C$ - in particular for any $a\in U_r\cap A_\C$ - we can consider an analytic extension of $H$ in a neighbourhood of $g$. However, since $g$ is not a fixed point of the action by conjugation it cannot be chosen $K$-invariant as in Lemma \ref{lem:extan}, thus we cannot get the integral expression of spherical functions around $g$ to work with.

    However Clerc gave in \cite{clerc} a multivalued analytic extension of $H$ to all of $K_\C A_\C N_\C$, as well as an integral formula of spherical functions of $(U,K)$ very similar to Lemma \ref{lem:spheriqueext}. But the domain of integration in the expression of $\varphi_\mu(g)$ is now an open subset $K_g$ of $K$. Thus, the lack of compactness does not allow for a similar proof.

    When $\mu$ is regular - meaning that $\langle\mu ,\alpha\rangle \neq 0$ for any root $\alpha$ - Clerc managed to reduce this integral to a compact subset and get precise estimate of spherical functions. However, this is not sufficient for our purposes, and this does not work when $\mu$ is not regular. 
\end{remark}

\nocite{*}
\addcontentsline{toc}{section}{Bibliography}
\bibliographystyle{alpha}
\bibliography{Ref}

\appendix

\section{Computation of \texorpdfstring{$\kappa(G)$}{K(G)}}
\label{sec:calculkappa}
If $G$ is semisimple with finite center, then $G/Z(G)= \prod_{i=1}^n G_i$ with $G_i$ simple. Then, $\kappa(G)=\underset{i\textrm{ s.t. }G_i\textrm{ not compact}}{\min} \kappa(G_i)$. Indeed, the restricted root system of $G$ is the direct sum of the restricted root system of the $G_i$'s, and the restricted root system of a compact group is trivial (because it is its own maximal compact subgroup). Thus, it suffices to compute $\kappa$ for noncompact simple Lie groups.\smallskip

Table \ref{tab:kappacomplex} deals with complex simple Lie groups (viewed as real Lie groups) and their compact real form $U$. In this case, all multiplicities are $2$. In Table \ref{tab:kappareel}, $M$ is the compact symmetric space dual to $G/K$, that is to say $U/K$ where $U$ is a compact real form of $G$. The computation uses the classification and multiplicities for such symmetric spaces given in \cite[Ch. VII]{loos1969symmetric2}.

\begin{table}[ht]
\caption{Values of $\kappa(G)$ for complex simple Lie groups.}
\label{tab:kappacomplex}
\renewcommand{\arraystretch}{1.2}
\centering
\begin{tabular}{|c|c|c|c|c|}
\hline
$G$ &$U$                 & $\dim G$  & $\rank G$ & $\kappa(G)$    \\ \hline
$SL(n,\C)$ & $SU(n), n\geq 2$    & $2(n^2-1)$   & $n-1$     & $n-1$  \\ \hline
$SO(2n+1,\C)$&$SO(2n+1), n\geq 1$ & $2n(2n+1)$ & $n$       & $2n-1$ \\ \hline
$Sp(2n,\C)$&$Sp(n), n\geq 1$    & $2n(2n+1)$ & $n$       & $2n-1$ \\ \hline
$SO(2n,\C)$&$SO(2n), n\geq 2$   & $2n(2n-1)$ & $n$       & \begin{tabular}{@{}c@{}} $1$ if $n=2$\\$3$ if $n=3$ \\ $2n-2$ else\end{tabular} \\ \hline
$(G_2)_\C$& $G_2$               & $28$      & $2$       & $5$    \\ \hline
$(F_4)_\C$&$F_4$               & $104$      & $4$       & $15$   \\ \hline
$(E_6)_\C$&$E_6$               & $156$      & $6$       & $16$   \\ \hline
$(E_7)_\C$&$E_7$               & $266$     & $7$       & $27$   \\ \hline
$(E_8)_\C$&$E_8$               & $496$     & $8$       & $57$   \\ \hline
\end{tabular}

\end{table}

\begin{table}[ht]
\caption{Values of $\kappa(G)$ for real simple Lie groups.}
\label{tab:kappareel}
\renewcommand{\arraystretch}{1.3}
\centering
\centerline{\begin{tabular}{|cc|c|c|c|}
\hline
\multicolumn{2}{|c|}{$M$}                                                                  & $G$        & $\rank G$ & $\kappa(G)$ \\ \hline
\multicolumn{1}{|c|}{$AI$}                    & $SU(n)/SO(n),n\geq 2$                              & $SL(n,\R)$ & $n-1$              &    $\frac{n-1}{2}$ \\ \hline
\multicolumn{1}{|c|}{$AII$}                   & $SU(2n)/Sp(n),n\geq 2$                             & $SU^*(2n)$          & $n-1$              &  $2(n-1)$   \\ \hline
\multicolumn{1}{|c|}{$AIII$}                  & $SU(p+q)/S(U(p)\times U(q)), p+q\geq 3$               & $SU(p,q)$                  & $\min(p,q)$        &  \begin{tabular}{@{}c@{}}$2$ if $p=q=2$ \\ $p+q-\frac{3}{2}$ else\end{tabular}   \\ \hline
\multicolumn{1}{|c|}{$BDI$}                   & $SO(p+q)/SO(p)\times SO(q),p+q\geq 3$ & $SO_0(p,q)$                   & $\min(p,q)$        &   \begin{tabular}{@{}c@{}}$\frac{1}{2}$ if $p=q=2$ \\ $\frac{3}{2}$ if $p=q=3$\\$\frac{p+q}{2}-1$ else\end{tabular}  \\ \hline
\multicolumn{1}{|c|}{$CI$}                    & $Sp(n)/U(n),n\geq 1$                               & $Sp(2n,\R)$               & $n$                &   $n-\frac{1}{2}$  \\ \hline
\multicolumn{1}{|c|}{$CII$}                   & $Sp(p+q)/Sp(p)\times Sp(q),p+q\geq 2$                & $Sp(p,q)$                  & $\min(p,q)$        &   \begin{tabular}{@{}c@{}}$5$ if $p=q=2$ \\ $2(p+q)-\frac{5}{2}$ else\end{tabular}   \\ \hline
\multicolumn{1}{|c|}{\multirow{2}{*}{$DIII$}} & $SO(4n)/U(2n), n\geq 1$                             & $SO^*(4n)$             & $n$                &  \begin{tabular}{@{}c@{}}$n\left(n-\frac{1}{2}\right)$ if $n\leq 3$ \\ $4n-\frac{7}{2}$ if $n>3$\end{tabular}   \\ \cline{2-5} 
\multicolumn{1}{|c|}{}                        & $SO(4n+2)/U(2n+1),n\geq 1$                         & $SO^*(4n+2)$             & $n$                &   $4n-\frac{3}{2}$  \\ \hline
\multicolumn{2}{|c|}{$EI$}                                                                 &               $E_{6(6)}$    & $6$                &  $8$   \\ \hline
\multicolumn{2}{|c|}{$EII$}                                                                &               $E_{6(2)}$    & $4$                &   $\frac{21}{2}$  \\ \hline
\multicolumn{2}{|c|}{$EIII$}                                                               &                $E_{6(-14)}$   & $2$                & $\frac{21}{2}$    \\ \hline
\multicolumn{2}{|c|}{$EIV$}                                                                &               $E_{6(-26)}$  & $2$                &  $8$   \\ \hline
\multicolumn{2}{|c|}{$EV$}                                                                 &                 $E_{7(7)}$   & $7$                &   $\frac{27}{2}$  \\ \hline
\multicolumn{2}{|c|}{$EVI$}                                                                &                 $E_{7(-5)}$  & $4$                &   $\frac{33}{2}$  \\ \hline
\multicolumn{2}{|c|}{$EVII$}                                                               &                $E_{7(-24)}$    & $3$                &  $\frac{27}{2}$   \\ \hline
\multicolumn{2}{|c|}{$EVIII$}                                                              &                $E_{8(8)}$  & $8$                &  $\frac{57}{2}$   \\ \hline
\multicolumn{2}{|c|}{$EIX$}                                                                &                $E_{8(-24)}$   & $4$                &   $\frac{57}{2}$  \\ \hline
\multicolumn{2}{|c|}{$FI$}                                                                 &                $F_{4(4)}$  & $4$                &   $\frac{15}{2}$ \\ \hline
\multicolumn{2}{|c|}{$FII$}                                                                &             $F_{4(-20)}$      & $1$                &   $\frac{15}{2}$  \\ \hline
\multicolumn{2}{|c|}{$G$}                                                                  &                  $G_{2(2)}$   & $2$                &   $\frac{5}{2}$  \\ \hline
\end{tabular}}

\end{table}

\end{document}